\documentclass{amsart}

\usepackage[utf8]{inputenc}
\usepackage{amssymb,amsfonts,comment,amsmath,verbatim,lscape,url,bbm}
\usepackage[all,arc]{xy}
\usepackage{enumerate}
\usepackage{mathrsfs}
\usepackage{stmaryrd}
\usepackage{color}
\usepackage{todonotes}

\usepackage{xcolor} 
\usepackage{graphicx}
\usepackage[pagebackref]{hyperref}
\definecolor{dark-red}{rgb}{0.5,0.15,0.15}
\definecolor{dark-blue}{rgb}{0.15,0.15,0.6}
\definecolor{dark-green}{rgb}{0.15,0.6,0.15}
\hypersetup{
    colorlinks, linkcolor=dark-red,
    citecolor=dark-blue, urlcolor=dark-green
}

\renewcommand*{\backref}[1]{}
\renewcommand*{\backrefalt}[4]{%
  \ifcase #1 %
No citations.
  \or
(cit. on p. #2).%
  \else
(cit on pp. #2).%
  \fi%
}
\usepackage[nameinlink,capitalise,noabbrev]{cleveref}

\newtheorem{thm}{Theorem}[section]
\newtheorem*{thm*}{Theorem}
\newtheorem{cor}[thm]{Corollary}
\newtheorem{prop}[thm]{Proposition}
\newtheorem{lemma}[thm]{Lemma}

\theoremstyle{definition}
\newtheorem{defn}[thm]{Definition}

\newtheorem{notn}[thm]{Notation}

\newtheorem{conv}[thm]{Convention}

\theoremstyle{remark}
\newtheorem{rmk}[thm]{Remark}

\newtheorem{warning}[thm]{Warning}

\makeatletter
\let\c@equation\c@thm
\makeatother

\setlength{\marginparwidth}{.8in}
\let\oldmarginpar\marginpar
\renewcommand\marginpar[1]{\-\oldmarginpar[\raggedleft\footnotesize #1]%
{\raggedright\footnotesize #1}}

\newcommand{\Alg}{\mathrm{Alg}}

\newcommand{\Mod}{\mathrm{Mod}}
\newcommand{\Fun}{\mathrm{Fun}}

\newcommand{\Sp}{\mathrm{Sp}}

\newcommand{\Hom}{\mathrm{Hom}}
\newcommand{\colim}{\mathrm{colim}}

\newcommand{\cof}{\mathrm{cof}}

\newcommand{\fib}{\mathrm{fib}}

\newcommand{\Map}{\mathrm{Map}}

\newcommand{\cC}{\mathcal{C}}

\DeclareMathOperator{\Lan}{Lan}

\newcommand{\cM}{\mathcal{M}}

\newcommand{\F}{\mathbb{F}}

\newcommand{\Z}{\mathbb{Z}}

\newcommand{\Es}[1]{\mathbb{E}_{#1}}

\newcommand{\id}{\mathrm{id}}




\newcommand{\cA}{\mathcal A}
\newcommand{\cD}{\mathcal D}

\newcommand{\cO}{\mathcal O}

\newcommand{\Pic}{\mathrm{Pic}}

\newcommand{\act}{\mathrm{act}}

\newcommand{\Mul}{\mathrm{Mul}}
\newcommand{\NFin}{N(\Fin)}
\newcommand{\Fin}{\mathrm{Fin}_*}
\newcommand{\coCart}{\mathrm{coCart}}

\newcommand{\modmod}[3][{}]{{#2}\sslash_{#1}{#3}}

\newcommand{\free}[1]{F_{\Es{#1}}}
\newcommand{\Ind}[2]{\mathrm{Ind}_{#1}^{#2}}
\newcommand{\ind}[2]{\Ind{\Es{#1}}{\Es{#2}}}
\newcommand{\coeq}{\mathrm{coeq}}

\newcommand{\adjoint}[1]{{#1}^\dagger}

\newcommand{\THH}{\mathrm{THH}}

\renewcommand{\L}[2]{L^{(#1)}_{#2}}



\title{A simple universal property of Thom ring spectra}
\author{Omar Antol\'{\i}n-Camarena}
\address{Instituto de Matem{\'a}ticas, National Autonomous University of Mexico,
Mexico City, Mexico}
\email{omar@matem.unam.mx}
\author{Tobias Barthel}
\address{Centre for Symmetry and Deformation, University of Copenhagen, Denmark}
\email{tbarthel@math.ku.dk}
\date{\today}
\subjclass[2010]{55N20, 55P42, 55P43, 55P48}

\setcounter{secnumdepth}{2}
\setcounter{tocdepth}{1}

\begin{document}

\maketitle

\begin{abstract}
  We give a simple universal property of the multiplicative structure on the Thom spectrum of an $n$-fold loop map, obtained as a special case of a characterization of the algebra structure on the colimit of a lax $\cO$-monoidal functor. This allows us to relate Thom spectra to $\Es{n}$-algebras of a given characteristic in the sense of Szymik. As applications, we recover the Hopkins--Mahowald theorem realizing $H\mathbb{F}_p$ and $H\mathbb{Z}$ as Thom spectra, and compute the topological Hochschild homology and the cotangent complex of various Thom spectra.
\end{abstract}

\tableofcontents

\section{Introduction}

\subsection{Motivation}

Many spectra of interest fall in one of two classes: either they contain important geometric or arithmetic information and are difficult to compute with, e.g., the sphere spectrum or various algebraic $K$-theory spectra, or they belong to a family of what Hopkins calls ``designer homotopy types'', like Eilenberg--Mac Lane spectra or Brown--Gitler spectra. 

Thom spectra are remarkable in that they often belong to the intersection: Classical examples include cobordism spectra, whose homotopy groups and other invariants often turn out to be completely computable. As another beautiful example, Mahowald~\cite{MahowaldRS} proves that the Eilenberg--Mac Lane spectra $H\F_2$ and $H\Z$ can be realized as Thom spectra, an observation which he applies in his construction of new elements in the homotopy groups of spheres~\cite{MahowaldInf}. Similar ideas also play a role in the proof of the nilpotence theorem by Devinatz, Hopkins, and Smith~\cite{nilpotence1}. 

From a more conceptual point of view, work of Lewis and May~\cite{LMS}, Sullivan~\cite{MITnotes}, and May and Sigurdsson~\cite{MS} places the theory of Thom spectra in the context of parametrized spectra and thus reveals the underlying geometry of the construction. We are interested here in how this framework accounts for some of the good algebraic properties of Thom spectra.

\subsection{Outline and main results}

In \cite{ABGHRinfcat} and \cite{Umkehr}, the authors describe a convenient 
model of Thom spectra as homotopy colimits of diagrams whose
shape is given by the base space. Their approach is closely related and essentially equivalent to the earlier models cited above, but is expressed in the language of $\infty$-categories. The study of multiplicative structures on Thom spectra can thus take place in the setting of $\cO$-monoidal functors between $\cO$-monoidal $\infty$-categories.

Section \ref{sec:one} first explains the situation in ordinary category theory that we then extend to $\cO$-monoidal $\infty$-categories, proving  our general characterization of the algebra structure of colimits of lax $\cO$-monoidal functors in \Cref{colimup}. This is used in Section \ref{sec:thom} to deduce the following new universal property of the multiplicative structure of Thom spectra of $\Es{n}$-maps, see \Cref{thomup}. 

\begin{thm*}
  If $X$ is an $\Es{n}$-space and $f\colon X \to \Pic(S^0)$ an $\Es{n}$-map, then the space $\Map_{\Alg_{S^0}^{\Es{n}}}(Mf, A)$ is equivalent to the space of
  $\Es{n}$-lifts of $f$ indicated below:
  \[ \xymatrix{
    & \Pic(S^0)_{\downarrow A} \ar[d] \\
    X \ar[r]_-f \ar@{-->}[ur] & \Pic(S^0). }\]
\end{thm*}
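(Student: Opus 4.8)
The plan is to read this off from the general statement \myref{colimup} applied to the colimit model of Thom spectra. Recall first that an $\Es{n}$-space $X$ is, in particular, an $\Es{n}$-monoidal $\infty$-groupoid (the inclusion $\cS \hookrightarrow \Cat$ preserves finite products, so it carries $\Es{n}$-monoids to $\Es{n}$-monoids), and that an $\Es{n}$-map $f\colon X \to \Pic(S^0)$ produces a strong $\Es{n}$-monoidal composite $F\colon X \to \Pic(S^0)\hookrightarrow\Sp$, using that $\Pic(S^0)\hookrightarrow\Sp$ is strong $\Es{\infty}$-monoidal. The models of \cite{ABGHRinfcat, Umkehr} identify the Thom spectrum $Mf$, with its $\Es{n}$-ring structure, with the colimit of $F$ computed in $\Sp$; I would begin by recalling this identification carefully, since it is what lets \myref{colimup} be invoked at all.

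Since the smash product on $\Sp$ preserves colimits in each variable, \myref{colimup} then applies to $F$ and identifies $\Map_{\Alg_{S^0}^{\Es{n}}}(Mf,A)$ with the space of lax $\Es{n}$-monoidal lifts of $F$ through the forgetful functor $\Sp_{/A}\to\Sp$, where $\Sp_{/A}$ carries the $\Es{n}$-monoidal structure induced by that of $A$, so that $(Z\to A)\otimes(Z'\to A)=(Z\otimes Z'\to A\otimes A\to A)$.

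It then remains to match this with the pictured space of $\Es{n}$-lifts of $f$. First, because the values of $F$ are invertible spectra and $\Sp_{/A}\to\Sp$ is conservative and strong monoidal, the lax structure maps of any lift of $F$ become equivalences after forgetting to $\Sp$ and hence are already equivalences; so every lax lift is strong monoidal, and it automatically factors through the full subcategory of $\Sp_{/A}$ spanned by the pairs $(L,L\to A)$ with $L$ invertible. This subcategory is itself an $\infty$-groupoid, namely the pullback $\Pic(S^0)_{/A}:=\Pic(S^0)\times_{\Sp}\Sp_{/A}$ — a right fibration over $\Pic(S^0)$, the unstraightening of $L\mapsto\Map_{\Sp}(L,A)$ — which inherits an $\Es{n}$-monoidal structure from those of $A$, $\Sp$, and $\Pic(S^0)$ as a limit of $\Es{n}$-monoidal $\infty$-categories, with its map to $\Pic(S^0)$ being strong $\Es{n}$-monoidal. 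Finally, since $X$ is an $\infty$-groupoid, a strong $\Es{n}$-monoidal refinement of $F$ landing in $\Pic(S^0)_{/A}$ is the same datum as an $\Es{n}$-map of $\Es{n}$-spaces $X\to\Pic(S^0)_{/A}$ lying over $f$, i.e.\ a point of the pictured space of lifts. Concatenating the equivalences proves the theorem.

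The step requiring the most care is this last identification: constructing $\Pic(S^0)_{/A}$ together with its $\Es{n}$-monoidal structure and its $\Es{n}$-monoidal projection to $\Pic(S^0)$, and verifying that the chain ``lax lift of $F$ $\leadsto$ strong monoidal lift $\leadsto$ $\Es{n}$-map over $\Pic(S^0)$'' is an equivalence of spaces, naturally in $A$. The lax-to-strong reduction, although conceptually the crux, is a formal consequence of conservativity of $\Sp_{/A}\to\Sp$ together with invertibility of the values of $F$; and the only other thing to be careful about is recording cleanly at the outset that the construction of $Mf$ in \cite{ABGHRinfcat, Umkehr} agrees, as an $\Es{n}$-ring, with $\colim F$ in $\Sp$.
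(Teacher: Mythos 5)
Your proposal is correct and follows essentially the same route as the paper: invoke \myref{colimup} to identify $\Map_{\Alg_{S^0}^{\Es{n}}}(Mf,A)$ with the space of lax $\Es{n}$-monoidal lifts through $(\Mod_{S^0})_{/A}\to\Mod_{S^0}$, and then observe that such lifts are automatically strong monoidal and factor through $\Pic(S^0)_{/A}$ --- the paper detects this via coCartesian edges over $\Es{n}^\otimes$, while you use conservativity of the projection, which is the same point. (One small imprecision: $\Pic(S^0)_{/A}$ is the pullback you write, not the \emph{full} subcategory of $\Sp_{/A}$ on invertible objects, since the pullback also restricts morphisms to those lying over equivalences; your argument factors through the pullback correctly regardless.)
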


This generalizes earlier structural results about Thom spectra proven by Lewis \cite{LMS} and in~\cite{ABGHRinfcat}. We then show that any $n$-fold loop map with Thom spectrum $Mf$ is canonically $\Es{n-1}$ $Mf$-orientable, thereby establishing a structured version of the Thom isomorphism. Moreover, we deduce a theorem of Chadwick and Mandell~\cite{engenera}, describing $\Es{n}$-orientations. 

We start Section \ref{sec:char} by introducing a notion of characteristic for $\Es{n}$-algebras in spectra, a straightforward extension of the $\Es{\infty}$-case studied previously by Szymik \cite{szymikcharacteristics1, szymikcharacteristics2}; see also~\cite{bakerchar} for related ideas. It is easy to construct a weakly initial example of an algebra of characteristic $\chi$, denoted $\modmod[\Es{n}]{S^0}{\chi}$. We then show that these algebras satisfy the same universal property as certain Thom spectra naturally corresponding to them, which gives our second main result, \Cref{charpthomcomparison}. 

\begin{thm*}
  For any $n$ and any $k\ge 1$ and $f\colon S^{k} \to BGL_1S^0$ with
  corresponding $n$-fold loop map
  $\bar{f} \colon \Omega^n \Sigma^n S^{k} \to BGL_1R$ and with
  associated characteristic $\chi = \chi(f)$, there is an equivalence
  $M\bar{f} = \modmod[\Es{n}]{S^0}{\chi}$.
\end{thm*}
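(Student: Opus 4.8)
The plan is to deduce the equivalence $M\bar f \simeq \modmod[\Es{n}]{S^0}{\chi}$ by showing that both sides corepresent the same functor on $\Es{n}$-algebras, using the universal property of Thom spectra (\myref{thomup}) on one side and the defining universal property of $\modmod[\Es{n}]{S^0}{\chi}$ on the other. By Yoneda, it suffices to produce, for every $\Es{n}$-algebra $A$, a natural equivalence between $\Map_{\Alg^{\Es{n}}_{S^0}}(M\bar f, A)$ and $\Map_{\Alg^{\Es{n}}_{S^0}}(\modmod[\Es{n}]{S^0}{\chi}, A)$.

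First I would unwind the right-hand side. By the construction of $\modmod[\Es{n}]{S^0}{\chi}$ as the initial $\Es{n}$-algebra of characteristic $\chi$, the mapping space $\Map_{\Alg^{\Es{n}}_{S^0}}(\modmod[\Es{n}]{S^0}{\chi}, A)$ is (contractible if $A$ has characteristic $\chi$, empty otherwise, or more precisely) the space of witnesses that $\chi$ maps to a nullhomotopic class in $A$ — i.e.\ the space of nullhomotopies of the composite $S^k \xrightarrow{\chi} S^0 \to A$, in the appropriate sense used to define characteristics. I would make this precise by recalling Szymik's definition as extended in Section~\ref{sec:char}: a characteristic is (represented by) a map out of a sphere, and an algebra has that characteristic exactly when the corresponding element is killed, so the corepresenting object classifies such trivializations.

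Next I would unwind the left-hand side using \myref{thomup}: $\Map_{\Alg^{\Es{n}}_{S^0}}(M\bar f, A)$ is the space of $\Es{n}$-lifts of $\bar f\colon \Omega^n\Sigma^n S^k \to \Pic(S^0)$ along $\Pic(S^0)_{/A} \to \Pic(S^0)$. Now I would use the free–forgetful adjunction between $\Es{n}$-spaces and pointed spaces (with $\Omega^n\Sigma^n S^k$ the free $\Es{n}$-space on $S^k$): an $\Es{n}$-map out of $\Omega^n\Sigma^n S^k$ is the same as a map of pointed spaces out of $S^k$. Under this adjunction, $\Es{n}$-lifts of $\bar f$ correspond to lifts of the original $f\colon S^k \to \Pic(S^0)$ along $\Pic(S^0)_{/A} \to \Pic(S^0)$. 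Since $f$ lands in $BGL_1 S^0 \subset \Pic(S^0)$ (the component of the unit) and $A$ is an $S^0$-algebra, such a lift is precisely a nullhomotopy of the composite $S^k \xrightarrow{f} BGL_1 S^0 \to BGL_1 A \simeq \ast$-worth of data — equivalently, a trivialization of the $A$-line classified by $f$, which by the very definition of the associated characteristic $\chi = \chi(f)$ is the same as a nullhomotopy exhibiting that $A$ has characteristic $\chi$. Matching this with the previous paragraph gives the natural equivalence.

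The main obstacle will be the bookkeeping in the identification of the two notions of ``trivialization'': on the Thom side one has a lift into the slice $\Pic(S^0)_{/A}$, while on the characteristic side one has whatever explicit datum Section~\ref{sec:char} uses (a nullhomotopy of a class in $\pi_k A$, or a factorization through a point). I would need to check that the passage $f \mapsto \bar f \mapsto \chi(f)$ is set up so that these two spaces of trivializations agree \emph{naturally in $A$}, not just that they are abstractly equivalent; this is essentially the content of how $\chi(f)$ is defined from $f$, so I expect it to be a definition-chase rather than a hard argument, but it is the step where all the compatibilities (the free $\Es{n}$-space adjunction, the slice description of algebra maps, and Szymik's characteristic) must be threaded together correctly. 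A secondary subtlety is the case $k$ and the connectivity of $S^k$: one should make sure the argument does not silently assume $A$ is connective or that $\pi_0$ issues do not interfere, which is why the hypothesis $k \ge 1$ appears.
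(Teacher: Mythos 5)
Your proposal is correct and follows essentially the same route as the paper: both sides are shown to corepresent the same functor on $\Alg_{S^0}^{\Es{n}}$, with \myref{thomup} together with the free $\Es{n}$-space adjunction (packaged in the paper as \myref{thomlewisuniversal}) handling the Thom side, and the defining pushout (via \myref{charpmappinguniversal}) handling the versal side.

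The one place your sketch needs repair is the ``definition chase'' you defer, which is exactly the content of \myref{pblemma} and \myref{defassocchar}: the fiber of $BGL_1S^0_{/A}\to BGL_1S^0$ is identified through the fiber sequence $GL_1S^0_{/A}\to GL_1S^0 \xrightarrow{\eta_*} \Hom_{S^0}(S^0,A)=\Omega^\infty A$, not through any contractibility of ``$BGL_1A$'' --- which is neither contractible nor even defined when $A$ is merely an $\Es{n}$-algebra rather than an $\Es{n+1}$-ring. Because the basepoint of $GL_1S^0$ maps under $\eta_*$ to the unit $\eta$ rather than to $0$, a lift of $f$ in the case $k=1$ of the statement is a homotopy $\eta_*\tilde f\simeq\eta$, which is why the associated characteristic is $\tilde f-1$ there, while for $k\ge 2$ it is $\tilde f$ on the nose; this degree dichotomy is the actual content behind the hypothesis $k\ge 1$, not a connectivity assumption on $A$. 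With that step made explicit, your argument coincides with the paper's.
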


This theorem establishes a connection between Thom spectra and versal algebras which allows to transfer results proven for one class to the other. We illustrate this idea in Section \ref{sec:applications}. For instance,  by specializing to $n=2$ and $(1-p)=f_p\colon S^1 \to BGL_1S^0_p$ and using the computation of the mod $p$ cohomology of the free $\Es{2}$-algebra on $S^1$ due to Araki and Kudo~\cite[Thm.~7.1]{arakikudo} and Dyer and Lashof~\cite[Thm.~5.2]{dyerlashof} as well as the determination of the corresponding Dyer--Lashof operations by Steinberger~\cite[Ch.~3, Thms.~2.2, 2.3]{hinfty}, we recover the Hopkins--Mahowald theorem
\[M\bar{f_p} = H\F_p.\]
This exhibits $H\F_p$ as the versal $\Es{2}$-algebra of characteristic $p$. Furthermore, our argument allows us to directly deduce the identification of $H\Z$ as an $\Es{2}$-Thom ring spectrum. Finally, we describe the topological Hochschild homology and the $\Es{n}$-cotangent complex of the $\Es{n}$-algebras considered above. 

\subsection*{Acknowledgments}

We are grateful to Jacob Lurie for suggesting to use the universal
property of Thom ring spectra to reprove the Hopkins--Mahowald
theorem. We would like to thank Mark Behrens, Jon Beardsley, Andrew Blumberg, David Gepner, Gijs Heuts, Mike Hopkins, Tyler Lawson, Chris Schommer-Pries and Dylan Wilson for helpful
conversations, the referee for many useful comments and suggestions, and for Qi Zhu for pointing out an error in in Corollary \ref{thomlewisuniversal} in a previous version. The second author also
thanks the Max Planck Institute for Mathematics for its hospitality.

\section{Colimits of lax functors}\label{sec:one}

\subsection{Ordinary lax functors}

Let $F\colon \cC \to \cD$ be a lax monoidal functor between two (ordinary)
monoidal categories. Recall that this means that we are given a family
of morphisms $FX \otimes FY \to F(X \otimes Y)$ natural in $X$ and $Y$
which are compatible with the associativity constraints in $\cC$ and
$\cD$. If $\cC$ is small and $\cD$ is \emph{monoidally cocomplete}, by
which we mean that $\cD$ is cocomplete and $\otimes\colon \cD \times \cD
\to \cD$ preserves colimits in each variable separately, then
$M:=\colim\; F$ acquires the structure of an algebra object in $\cD$.
The multiplication $\mu\colon M \otimes M \to M$ is obtained from the lax
structure of $F$ as follows: the composite morphisms 
\[
FX \otimes FY
\to F(X \otimes Y) \to M
\]
induce a morphism 
\[
M \otimes M \cong
\colim_{(X,Y) \in \cC \times \cC} (FX \otimes FY) \to M,
\] 
where the
first isomorphism comes from the assumption that tensor products in
$\cD$ commute with colimits in each variable separately. This is an
analogue in ordinary category theory of Lewis result that the Thom
spectrum of a loop map is an $\Es{1}$-ring spectrum.

If we are additionally given an algebra object $A$ in $\cD$, the slice
category $\cD_{/A}$ acquires a monoidal structure in which $(W
\xrightarrow{f} A) \otimes (Z \xrightarrow{g} A)$ is given by the
composite 
\[
W \otimes Z \xrightarrow{f \otimes g} A \otimes A
\xrightarrow{\mu_A} A.
\] 
The projection functor $\pi\colon \cD_{/A} \to
\cD$ is, of course, monoidal.

The universal property we will give for the multiplicative structure
of Thom spectra is an analogue of the following fact: morphisms $M \to
A$ of algebra objects in $\cD$ are in one to one correspondence with
lax monoidal lifts of $F$ through $\pi$:

\[ \xymatrix{
  & \cD_{/A} \ar[d]^{\pi} \\
  \cC \ar[r]_F \ar@{-->}[ur] & \cD.}\]

In this setting, this fact is straightforward to verify: lax monoidal
lifts are ``lax monoidal cocones'' over $F$ with vertex $A$, and such
cocones induce algebra morphisms $M = \colim\; F \to A$.

In Section \ref{sec:lax} we will generalize this discussion in two ways:
first, we will work with $\infty$-categories instead of ordinary
categories; second, instead of considering only monoidal
$\infty$-categories we will work with $\cO$-monoidal
$\infty$-categories for an arbitrary $\infty$-operad $\cO$. (For
applications we will take $\cO$ to be the $\Es{n}$ operad for some $0 \le n
\le \infty$.) Before doing that, however, let us recall some
definitions from the theory of $\infty$-operads.

\subsection{A few words about $\infty$-operads}\label{sec:infop}

We recall some definitions about $\infty$-operads from chapters 2
and 3 of Lurie's \emph{Higher Algebra}~\cite{HA}. The theory of
$\infty$-operads developed there is the $(\infty,1)$-generalization of
the theory of colored operads or symmetric multicategories. So an
$\infty$-operad $\cO$ has a collection of objects and for every finite
(unordered) family $\{X_i\}_{i \in I}$ of objects and any object $Y$
there is a space of morphisms $\Mul_\cO(\{X_i\},Y)$. The definitions
in \cite{HA} encode this data indirectly: an $\infty$-operad $\cO$ is
specified by an $\infty$-category $\cO^\otimes$ and a functor
$\cO^\otimes \to \NFin$ to the category of finite pointed sets
satisfying certain conditions. Morphisms of $\infty$-operads
$\cO \to \cO'$ are functors over $\NFin$ (that is, functors
$\cO^\otimes \to \cO'^\otimes$ that form a commutative triangle with
the structure maps to $\NFin$) which satisfy one additional condition.
See \cite[Definitions 2.1.1.10 and 2.1.2.7]{HA} for the precise definition.

\begin{rmk}\label{operad-intuition}
  To give some intuition for how the definition captures the notion of
  colored operad, think of the case of symmetric monoidal categories.
  These can be regarded as a special kind of operad in which
  $\Mul_\cC(\{X_i\}_{i \in I},Y) = \mathrm{Hom}_\cC(\bigotimes_{i \in
    I} X_i, Y)$; but they can also be thought of as commutative
  monoids in the category of all categories, which using Segal's idea
  of $\Gamma$-spaces, can be encoded as appropriately weak functors
  $\Fin \to \mathrm{Cat}$ satisfying certain conditions. Applying the
  (covariant) Grothendieck construction to such a functor one obtains
  a coCartesian fibration $\cC^\otimes \to \Fin$. The definition of
  symmetric monoidal $\infty$-category is exactly what this would
  suggest: a coCartesian fibration of $\infty$-categories $\cC^\otimes
  \to \NFin$ satisfying an analogue of the Segal condition, and the
  definition of $\infty$-operad is a generalization of this.
  
  When representing symmetric monoidal categories as colored operads,
  one needs to be aware of a subtlety relating to morphisms: maps of
  operads correspond not to symmetric monoidal functors (i.e.,
  functors with a natural isomorphism
  $F(X) \otimes F(Y) \cong F(X \otimes Y)$, compatible with the unit,
  associativity and symmetry), but to \emph{lax} symmetric monoidal
  functors (i.e., functors with just a compatible natural
  \emph{transformation} $F(X) \otimes F(Y) \to F(X \otimes Y)$). In
  terms of coCartesian fibrations $\cC^\otimes \to \Fin$, symmetric
  monoidal functors correspond to functors over $\Fin$ which send
  coCartesian morphisms to coCartesian morphisms, and lax symmetric
  monoidal functors correspond to a more general type of functor over
  $\Fin$ which is analogous to the definition of morphism of
  $\infty$-operad cited above.
\end{rmk}

An $\cO$-monoidal $\infty$-category is defined to be a coCartesian
fibration $\cC^\otimes \to \cO^\otimes$ such that the composite
$\cC^\otimes \to \cO^\otimes \to \NFin$ presents $\cC$ as an
$\infty$-operad. If $\cO$ is the $\Es{1}$ or $\Es{\infty}$ operad, this
recovers the notion of monoidal $\infty$-category or symmetric
monoidal $\infty$-category, respectively.

\begin{rmk}
  Again, the intuition for this definition comes from the
  Grothendieck construction: morally, one would want to say that $\cO$-monoidal
  categories are $\cO$-algebras in categories, or more
  precisely, morphisms of operads from $\cO$ to the
  $(\infty,2)$-category of $\infty$-categories equipped with its
  symmetric monoidal structure. But instead of appealing to the theory
  of $(\infty,2)$-categories, it is much easier to describe what the
  result of applying a version of the Grothendieck construction
  would be and adopt the resulting coCartesian fibration as the
  definition.
\end{rmk}

\begin{warning}\label{multi}
  In the case that $\cO$ is an operad with a single color $X$, an
  $\cO$-monoidal $\infty$-category $p\colon \cC^\otimes \to \cO^\otimes$
  can be thought of as an $\infty$-category $\cC = p^{-1}(X)$ equipped
  with functors $\cC^n \to \cC$ for each $n$-ary operation in $\cO$.
  Of course, something similar is true for multicolored $\cO$, except
  that instead of a single underlying $\infty$-category, there is an
  $\infty$-category $\cC_X = p^{-1}(X)$ for each object $X$ of $\cO$,
  and a functor $\prod \cC_{X_i} \to \cC_Y$ for each operation in
  $\Mul_\cO(\{X_i\},Y)$. The reader can pretend that whenever we
  mention an $\cO$-monoidal $\infty$-category, $\cO$ has a single
  object without missing out on anything essential.
\end{warning}

Between two $\cO$-monoidal $\infty$-categories $\cC$ and $\cD$ we can
consider $\cO$-monoidal functors and \emph{lax} $\cO$-monoidal
functors, these two notions follow the pattern described in the second
paragraph of \Cref{operad-intuition}. Lax $\cO$-monoidal functors are
simply morphisms of $\infty$-operads $\cC \to \cD$ over $\cO$; they
form an $\infty$-category denoted $\Fun^{\mathrm{lax}}_\cO(\cC, \cD)$
or $\Alg_{\cC/\cO}(\cD)$. The lax $\cO$-monoidal functors that send
coCartesian morphisms to coCartesian morphisms are the $\cO$-monoidal
functors; they form an $\infty$-category $\Fun^\otimes_\cO(\cC, \cD)$.

\begin{defn}\label{moncocompl}
  Let $\cO^\otimes$ be an $\infty$-operad. An \emph{$\cO$-monoidally
    cocomplete} $\infty$-category is an $\cO$-monoidal category, that
  is, a coCartesian fibration of $\infty$-operads $q \colon \cC^\otimes \to
  \cO^\otimes$, such that $q$ is compatible with all small colimits in
  the sense of \cite[Definition 3.1.1.18]{HA}. This means that for
  every object $X$ of the underlying $\infty$-category $\cO$ of
  $\cO^\otimes$, the $\infty$-category $\cC_X := q^{-1}(X)$ is
  cocomplete, and for every morphism $f \in \Mul_\cO(\{X_i\}_{1 \le i
    \le n},Y)$, the functor $\otimes_f \colon \prod_{1\le i \le n}
  \cC_{X_i} \to \cC_Y$ preserves colimits in each variable separately.
\end{defn}

Recall that the \emph{core} of an $\infty$-category $\cC$ is the
maximal $\infty$-groupoid contained in $\cC$. If $\cC$ is incarnated
as a quasi-category, then the core $\cC^\simeq$ is easily described as
a Kan complex: it is the subsimplicial set of $\cC$ consisting of
simplices all of whose edges are invertible morphisms of $\cC$. If
$\cC$ has an $\cO$-monoidal structure, then, as expected from the
theory of ordinary monoidal categories, $\cC^\simeq$ inherits an
$\cO$-monoidal structure as well. More precisely, we have:

\begin{prop}\label{core}
  Let $\cO$ be an $\infty$-operad and let $q \colon \cC^\otimes \to
  \cO^\otimes$ exhibit $\cC$ as an $\cO$-monoidal $\infty$-category. Define
  $\cC^\otimes_\coCart$ to be subcategory of $\cC^\otimes$
  spanned by the $q$-coCartesian morphisms. The restriction of
  $q$ to this subcategory, $\tilde{q} \colon \cC^\otimes_\coCart
  \to \cO^\otimes$, then exhibits $\cC^\otimes_\coCart$ as an
  $\cO$-monoidal category such that for each object $X$ of $\cO$, the
  underlying $\infty$-category
  $\left(\cC^\otimes_\coCart\right)_X$ is the core of $\cC_X$.
\end{prop}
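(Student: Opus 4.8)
The plan is to verify directly that $\tilde q\colon \cC^\otimes_\coCart \to \cO^\otimes$ is a coCartesian fibration of $\infty$-operads, and then to identify the fibers. The first observation is that $\cC^\otimes_\coCart$ is, by definition, the wide subcategory of $\cC^\otimes$ on the $q$-coCartesian edges, so its morphisms compose (a composite of coCartesian edges is coCartesian) and it contains all equivalences; hence it is genuinely a subcategory. To see that $\tilde q$ is a coCartesian fibration, I would take a morphism $\alpha\colon U \to V$ in $\cO^\otimes$ and an object $C$ of $\cC^\otimes_\coCart$ over $U$; the $q$-coCartesian lift $\bar\alpha\colon C \to C'$ of $\alpha$ already exists in $\cC^\otimes$ and lies in $\cC^\otimes_\coCart$ by construction, so the only thing to check is that $\bar\alpha$ is also $\tilde q$-coCartesian, i.e. that the relevant lifting/mapping-space condition holds after restricting to coCartesian edges. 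This is where the main technical point lies: one must show that for any object $D$ of $\cC^\otimes_\coCart$ the square of mapping spaces
\[ \xymatrix{
  \Map_{\cC^\otimes_\coCart}(C', D) \ar[r] \ar[d] & \Map_{\cC^\otimes_\coCart}(C, D) \ar[d] \\
  \Map_{\cO^\otimes}(V, \tilde q D) \ar[r] & \Map_{\cO^\otimes}(U, \tilde q D) }\]
is a pullback. Because $\bar\alpha$ is $q$-coCartesian, the analogous square with $\cC^\otimes$ in place of $\cC^\otimes_\coCart$ is a pullback, so it suffices to know that $\Map_{\cC^\otimes_\coCart}(X,Y) \to \Map_{\cC^\otimes}(X,Y)$ is compatible with these squares — which it is, since $\cC^\otimes_\coCart$ is a subcategory — together with the key fact that the question of whether a morphism $C' \to D$ lies in $\cC^\otimes_\coCart$ depends only on its composite with $\bar\alpha$ up to the coCartesian condition. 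Concretely: a $2$-simplex of $\cC^\otimes$ with one edge equal to the coCartesian $\bar\alpha$ has its remaining two edges either both coCartesian or neither, by the cancellation property of coCartesian morphisms; this is exactly what forces the mapping-space square above to restrict correctly.

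Having established that $\tilde q$ is a coCartesian fibration, I would next check that the composite $\cC^\otimes_\coCart \to \cO^\otimes \to \NFin$ exhibits $\cC^\otimes_\coCart$ as an $\infty$-operad, i.e. that the Segal-type conditions of \cite[Definition 2.1.1.10]{HA} hold. The inert morphisms in $\cO^\otimes$ have coCartesian lifts in $\cC^\otimes$ landing in $\cC^\otimes_\coCart$ (all coCartesian edges are in there), and the decomposition $\cC^\otimes_{\la X_i\ra} \simeq \prod_i \cC^\otimes_{X_i}$ over an object $\la X_i\ra$ of $\cO^\otimes$ lying over $\la n\ra \in \NFin$ is inherited from the corresponding decomposition for $q$, because the projections are given by coCartesian pushforward along the inert maps $\rho^i$ and these are morphisms of $\cC^\otimes_\coCart$. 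So the Segal condition passes from $\cC^\otimes$ to $\cC^\otimes_\coCart$ essentially formally. Likewise, the condition that $q$ is compatible with all small colimits (Definition \ref{moncocompl}) is not needed for this proposition — we only claim $\cC^\otimes_\coCart$ is an $\cO$-monoidal category — but I would note that the tensor functors $\otimes_f$ on $\cC^\otimes_\coCart$ are the restrictions of those on $\cC$.

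For the identification of fibers, I would argue that $\big(\cC^\otimes_\coCart\big)_X = \tilde q^{-1}(X)$ consists of all objects of $\cC^\otimes$ over $X$ — which is just $\cC_X$ on objects — together with exactly those morphisms of $\cC_X$ that are $q$-coCartesian over the identity of $X$. But a morphism lying over an identity morphism is $q$-coCartesian if and only if it is an equivalence (a coCartesian lift of an equivalence is an equivalence, and conversely equivalences are always coCartesian). Hence $\big(\cC^\otimes_\coCart\big)_X$ is precisely the maximal subgroupoid $\cC_X^\simeq$, i.e. the core. I expect the main obstacle to be the mapping-space pullback argument establishing that $q$-coCartesian edges remain $\tilde q$-coCartesian after passing to the subcategory; the cleanest route is probably to invoke the characterization of coCartesian edges via the induced fibration on undercategories, or alternatively to cite \cite[Remark 2.4.1.4]{HA} and the stability properties of coCartesian morphisms, rather than to manipulate mapping spaces by hand. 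Everything else — subcategory axioms, Segal condition, fiber identification — is then routine bookkeeping with the basic formal properties of coCartesian fibrations.
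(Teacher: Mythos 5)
Your proposal is correct and follows essentially the same route as the paper's proof: the same three-step verification (coCartesian fibration, identification of fibers with cores via ``a morphism over an identity is coCartesian iff it is an equivalence,'' and the Segal condition inherited from $\cC^\otimes$), resting on the same two key facts, namely closure of $q$-coCartesian edges under composition and the cancellation property. The only difference is cosmetic: you phrase the coCartesian-lifting verification via mapping-space pullback squares, whereas the paper argues directly with horn-filling in quasi-categories (citing the dual of \cite[Remark 2.4.1.4]{HTT} and \cite[Proposition 2.4.1.7]{HTT}).
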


\begin{proof}
  We will show that:
  \begin{enumerate}
  \item\label{res:cocart} $\tilde{q}$ is a coCartesian fibration.
  \item\label{res:core} $\left(\cC^\otimes_\coCart\right)_X =
    {\cC_X}^\simeq$ for any object $X$ of $\cO$ or even of
    $\cO^\otimes$.
  \item\label{res:operad} The composite $\cC^\otimes_\coCart
    \xrightarrow{\tilde{q}} \cO^\otimes \to \NFin$ exhibits
    $\cC^\otimes_\coCart$ as an $\infty$-operad.
  \end{enumerate}
  
  The items \ref{res:cocart} and \ref{res:operad} guarantee
  $\cC^\otimes_\coCart$ is an $\cO$-monoidal category. We have
  placed item \ref{res:core} in the middle because it will be used in
  the proof of \ref{res:operad}.

  \emph{Proof of \ref{res:cocart}}. First we need to show that
  $\tilde{q}$ is an inner fibration of simplicial sets, i.e., we must
  show it has the right lifting property against inner horn
  inclusions. But $q$ has this lifting property and the fillers only
  have $1$-simplices not present in the horn in a single case:
  $\Lambda^2_1 \hookrightarrow \Delta^2$. Since coCartesian morphisms
  are closed under composition, the filler for $q$ will also serve as
  a filler for $\tilde{q}$.

  Now we will show that all morphisms in $\cC^\otimes_\coCart$
  are $\tilde{q}$-coCartesian. This is very similar to the above proof
  that $\tilde{q}$ is an inner fibration. A morphism $f$ is
  coCartesian if and only if for every $n \ge 2$ and every commutative
  diagram
  \[
   \xymatrix{
    \Delta^1 \ar@{^{(}->}[d]_{i_{01}} \ar[dr]^f & \\
    \Lambda^0_n \ar[r] \ar@{^{(}->}[d] &
    \cC^\otimes_\coCart \ar[d] \\
    \Delta^n \ar[r] \ar@{-->}[ur] & \cO^\otimes }
    \]
  there is a dotted arrow that makes the diagram commute (this is dual
  to \cite[Remark 2.4.1.4]{HTT}). As above, we can always take the
  same filler as for the corresponding diagram with $\cC^\otimes$ in
  place of $\cC^\otimes_\coCart$: indeed, the simplex
  $\Delta^n$ has no $1$-simplices absent from $\Lambda^0_n$ unless
  $n=2$; and if $n=2$, we can still use the same filler because if $g$
  and $h \circ g$ are coCartesian then so is $h$ (this is the dual of
  \cite[Proposition 2.4.1.7]{HTT}).

  \emph{Proof of \ref{res:core}}. The morphisms in
  $\left(\cC^\otimes_\coCart\right)_X$ project to the identity
  morphism of $X \in \cO^\otimes$ by definition. Morphisms projecting
  to the identity are coCartesian if and only if they are invertible
  \cite[Proposition 2.4.1.5]{HTT}.

  \emph{Proof of \ref{res:operad}}. If
  $p \colon \cO^\otimes \to \NFin$ is an $\infty$-operad and
  $\langle n \rangle \in \Fin$ denotes the pointed set
  $\{\ast, 1, 2, \ldots, n\}$, there is an equivalence
  $\cO^\otimes_{\langle n \rangle} \to \cO^n$. What we need to check
  according to \cite[Proposition 2.1.2.12]{HA} is that for every
  $X \in \cO^\otimes_{\langle n \rangle}$ with corresponding sequence
  $(X_1, \ldots, X_n) \in \cO^n$, we get an equivalence
  $(\cC^\otimes_\coCart)_X \cong \prod_{i=1}^n (\cC^\otimes_\coCart)_{X_i}$
  induced by taking coCartesian lifts of the inert morphisms
  $X \to X_i$. For the purposes of this argument, the definition of
  inert does not matter: we simply note that by the same proposition
  applied to $\cC^\otimes$, we \emph{do} have such an equivalence for
  $\cC^\otimes$ instead of $\cC^\otimes_\coCart$. This equivalence restricts to
  the required equivalence because coCartesian morphisms are closed
  under composition.
\end{proof}

\begin{rmk}
  This proposition encodes the $\cO$-monoidal structure of the space
  $\cC^\simeq$ as an $\infty$-category $\cC^\otimes_\coCart$ with a
  coCartesian fibration over $\cO^\otimes$. A more direct way to
  encode an $\cO$-space is as an $\cO$-algebra in the
  $\infty$-category of spaces. These two forms are equivalent: the
  coCartesian fibration $\tilde{q}$ constructed in the proposition has
  the feature that all morphisms in the domain are coCartesian, and
  thus by \cite[Proposition 2.4.2.4]{HA}, $\tilde{q}$ is a left
  fibration, and therefore classifies a functor
  $\cO^\otimes \to \mathrm{Spaces}$ which exhibits $\cC^\simeq$ as an
  $\cO$-monoid in spaces. This is the correspondence of \cite[Example
  2.4.2.4]{HA} restricted to left fibrations.
\end{rmk}

\begin{rmk}
  That the core of an $\cO$-monoidal $\infty$-category inherits an
  $\cO$-monoidal structure has been used implicitly before (see for
  example, \cite[Definition 8.5 and Remark 8.6]{Umkehr} or
  \cite[Proposition 2.2.3]{pictmf}), but as far the authors know, no
  explicit description has appeared in the literature, which is why we
  feel justified in giving it in such detail.
\end{rmk}

\subsection{The $\cO$-algebra structure on the colimit of a lax
  $\cO$-monoidal functor}\label{sec:lax}

For $\cC$ an $\cO$-monoidal category, the $\infty$-category of
$\cO$-algebras in $\cC$, denoted by $\Alg_{/\cO}(\cC)$, is defined to
be $\Alg_{\cO/\cO}(\cC)$, the $\infty$-category of lax $\cO$-monoidal
functors $\cO \to \cC$. Unwinding the definitions we see that
$\cO$-algebras in $\cC$ are sections of the coCartesian fibration
$\cC^\otimes \to \cO^\otimes$ presenting the $\cO$-monoidal structure
on $\cC$; more precisely $\cO$-algebras are those sections which are
also maps of $\infty$-operads, i.e., maps sending inert morphisms to
inert morphisms.

\begin{thm}\label{lewis}
  Let $\cO$ be an $\infty$-operad, $\cC$ be a small $\cO$-monoidal
  $\infty$-category, and $\cD$ be an $\cO$-monoidally cocomplete
  $\infty$-category. If $F \colon \cC^\otimes \to \cD^\otimes$ is a lax
  $\cO$-monoidal functor, then there is an $\cO$-algebra in $\cD$
  given by a functor $M \colon \cO \to \cD$ such that for every
  object $X$ of $\cO$, $M(X) = \colim (F_X \colon \cC_X \to \cD_X)$.
\end{thm}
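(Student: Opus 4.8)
The plan is to realise $M$ as an operadic left Kan extension. Write $p\colon\cD^\otimes\to\cO^\otimes$ and $q\colon\cC^\otimes\to\cO^\otimes$ for the coCartesian fibrations encoding the two $\cO$-monoidal structures, so that $F$ is a map of $\infty$-operads over $\cO^\otimes$, i.e.\ an object of $\Alg_{\cC/\cO}(\cD)$. Precomposition with $q$ gives a restriction functor $q^*\colon\Alg_{/\cO}(\cD)\to\Alg_{\cC/\cO}(\cD)$, and I would take $M:=q_!F$ to be the value at $F$ of its left adjoint. That this left adjoint exists — and hence that $M$ really is an $\cO$-algebra in $\cD$ — is exactly what the theory of operadic left Kan extensions \cite[\S3.1]{HA} provides, the needed operadic colimits existing because $\cC$ is small and $\cD$ is $\cO$-monoidally cocomplete (\myref{moncocompl}).

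What requires genuine work is the formula for the underlying objects. For a colour $X$, \cite[\S3.1]{HA} computes $M(X)$ as an operadic colimit of $F$ restricted to the comma $\infty$-category $\cJ_X:=\cC^\otimes_{\mathrm{act}}\times_{\cO^\otimes_{\mathrm{act}}}(\cO^\otimes_{\mathrm{act}})_{/X}$ of active morphisms into $X$. The first point is that the fibre inclusion $\cC_X\hookrightarrow\cJ_X$, $V\mapsto(V,\id_X)$, is cofinal. Given an object $(W,g\colon q(W)\to X)$ of $\cJ_X$, the associated comma $\infty$-category has as objects the pairs $(V,h)$ with $V\in\cC_X$ and $h\colon W\to V$ an active morphism of $\cC^\otimes$ lying over $g$; since $q$ is a coCartesian fibration, the $q$-coCartesian lift $\bar g\colon W\to g_!W$ of $g$ — which is active because $g$ is, and which lands in $\cC_X$ — is an initial object of it by the universal property of coCartesian edges, so this comma $\infty$-category is weakly contractible and Joyal's cofinality criterion \cite[\S4.1]{HTT} applies. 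This cofinality is the one place where the hypothesis that $\cC^\otimes\to\cO^\otimes$ is a coCartesian fibration, rather than merely a map of $\infty$-operads, enters; without it the statement is false. The second point is that $F|_{\cC_X}$ lies over the constant diagram at the single colour $X$, so its operadic colimit agrees with its ordinary colimit in the cocomplete $\infty$-category $\cD_X$ \cite[\S3.1]{HA}. Together these give $M(X)=\colim(F_X\colon\cC_X\to\cD_X)$; and at an object $Y$ of $\cO^\otimes$ over $\langle n\rangle$ one finds $M(Y)\simeq\prod_i M(Y_i)$ automatically, as it must be since $M$ is a map of $\infty$-operads.

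The hard part is the cofinality argument just sketched: one must unwind $\cJ_X$ as an iterated pullback of slice $\infty$-categories, apply the precise universal property of $q$-coCartesian edges to recognise $\bar g\colon W\to g_!W$ as a genuine initial object of the relevant comma $\infty$-category, and then confirm that this cofinality and the constant-colour reduction behave well for operadic rather than merely ordinary colimits — points implicit in, but not literally stated by, \cite[\S3.1]{HA}. Everything else is formal. (A variant avoids operadic colimits entirely: $\Fun(\cC,\cD)$ carries an $\cO$-monoidal Day convolution structure with $\Alg_{/\cO}(\Fun(\cC,\cD))\simeq\Alg_{\cC/\cO}(\cD)$ for which $\colim\colon\Fun(\cC,\cD)\to\cD$ is $\cO$-monoidal; postcomposing $F$ with $\colim$ then yields an $\cO$-algebra whose value at a colour $X$ is $\colim F_X$, trading the cofinality bookkeeping for the theory of Day convolution for general $\infty$-operads.)
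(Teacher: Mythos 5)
Your proof is correct and follows essentially the same route as the paper: both construct $M$ as the operadic left Kan extension of $F$ along $\cC^\otimes\to\cO^\otimes$ relative to $\cD^\otimes\to\cO^\otimes$ (existence from \cite[Proposition 3.1.1.20]{HA}) and then identify $M(X)$ with $\colim F_X$ by comparing operadic colimits over a coCartesian fibration with ordinary colimits in the fibre \cite[Proposition 3.1.1.16]{HA}. The only difference is one of detail: you carry out the cofinality of $\cC_X\hookrightarrow(\cC^\otimes_{\mathrm{act}})_{/X}$ by hand via coCartesian pushforward, whereas the paper delegates that identification to the $n=1$ case of \cite[Theorem 3.1.2.3]{HA}.
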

  
\begin{rmk}
  In the case that $\cO$ has a single object, the conclusion should be
  thought of as saying that the colimit of the functor $F \colon \cC \to
  \cD$ has a canonical structure of an $\cO$-algebra in $\cD$.
\end{rmk}

\begin{rmk}
  This theorem is due to Lewis~\cite[Section IX.7]{LMS} in the case
  that $\cC$ is an $\infty$-groupoid and $\cD$ is the category of
  spectra --- and $\cO$ has a single color. In \cite{Umkehr}, the
  authors prove a similar result under slightly stronger assumptions:
  that $\cO$ is coherent and $\cD$ is an $\cO$-algebra object in the
  $\infty$-category of presentable $\infty$-categories (and functors
  which are left adjoints).
\end{rmk}

\begin{proof}
  Let $p \colon \cC^\otimes \to \cO^\otimes$ and
  $q \colon \cD^\otimes \to \cO^\otimes$ be the coCartesian fibrations
  of $\infty$-operads presenting $\cC$ and $\cD$ as $\cO$-monoidal
  categories. We will use \cite[Theorem 3.1.2.3 (A)]{HA} to show that
  our assumption that $\cD$ is $\cO$-monoidally cocomplete guarantes
  the existence of an operadic left Kan extension $M$ of $F$ along $p$
  relative to $q$. Using that theorem requires repackaging the funtor
  $p : \cC^\otimes \to \cO^\otimes$ we wish to extend along as a
  $\Delta^1$-family of operads $\cM^\otimes \to \Fin \times \Delta^1$.
  This family is simply the mapping cylinder of $F$, given by
  \[\cM^\otimes = \left(\cC^\otimes \times \Delta^1\right)
    \coprod_{\cC^\otimes \times \{1\}} \cO^\otimes.\]

  (A word of motivation for readers unfamiliar with Lurie's approach
  to defining Kan extensions: the idea is that a functor
  $G : \cM^\otimes \to \cD^\otimes$ comprises the data of a functor
  $G_1 : \cO^\otimes \to \cD^\otimes$, namely
  $G_1 = G|_{\cM^\otimes \times_{\Delta^1} \{1\}}$, and a natural
  transformation
  $\cC^\otimes \times \Delta^1 \to \cM^\otimes \xrightarrow{G}
  \cD^\otimes$ from $G_0$ to $G_1 \circ p$. This way, the problem of
  finding the left Kan extension of $F$ along $p$ together with the
  universal natural transformation $F \to \Lan_p F \circ p$, becomes
  the problem of left Kan extending $F$ along the inclusion
  $\cC^\otimes = \cM^\otimes \times_{\Delta^1} \{0\} \hookrightarrow
  \cM^{\otimes}$.)

  According to \cite[Theorem 3.1.2.3 (A)]{HA}, there exists an
  operadic left Kan extension $L : \cM^\otimes \to \cD^\otimes$ of $F$
  relative to $q$ making the following diagram commute
  \[ \xymatrix{
      \cC^\otimes \ar[r]^{F} \ar[d]_{i_0} & \cD^\otimes \ar[d]_q \\
      \cM^\otimes \ar[r]_{p'} \ar@{-->}[ur]^{L} & \cO^\otimes}\]
  (where the map $p' : \cM^\otimes \to \cO^\otimes$ is the one induced
  by $p \circ \pi_1$ on $\cC^\otimes \times \Delta^1$ and the identity
  on $\cO^\otimes$), if and only if for each object $X$ of
  $\cO^\otimes = \cM^\otimes \times_{\Delta^1} \{1\}$, the diagram
  \[(\cM^\otimes_\act)_{/X} \times_{\Delta^1} \{0\} \to \cM^\otimes
    \times_{\Delta^1} \{0\} \xrightarrow{F} \cD^\otimes\]
  can be extended to an operadic $q$-colimit lifting the map
  \[\left((\cM^\otimes_\act)_{/X} \times_{\Delta^1}
      \{0\}\right)^\triangleright
    \to \cM^\otimes \to \cO^\otimes;\]
we refer the reader to \cite[Definition~2.1.2.3, Remark~2.2.4.3]{HA} for the definition of $\cM^\otimes_\act$.
  
 Since $\cD$ is assumed to be $\cO$-monoidally cocomplete,
  \cite[Proposition 3.1.1.20]{HA}, provides the required operadic
  $q$-colimits.

  \smallskip
  
  The restriction $M$ of $L$ to
  $\cM^\otimes \times_{\Delta^1} \{1\} = \cO^\otimes$ is the desired
  algebra structure on $\colim\; F$. First of all, it is indeed an
  $\cO$-algebra, because $L$ is a map of families of operads, and
  according to the commutative diagram above
  $q \circ M = \id_{\cO^\otimes}$. Next we need to check that for each
  object $X$ in $\cO$,
  $M(X) \cong \colim (F|_{\cC_X} \colon \cC_X \to \cD_X)$. But by
  definition of operadic relative left Kan extension, the diagram
  $\left((\cC_\act^\otimes)_{/X}\right)^\vartriangleright \to
  \cD^\otimes$ induced by $L$ is an operadic colimit diagram relative
  to $q$. Finally \cite[Proposition 3.1.1.16]{HA} states that an
  operadic colimit relative to a coCartesian fibration (here, $q$)
  becomes a colimit in each underlying $\infty$-category $\cD_X$.
\end{proof}

\begin{cor}\label{ltadj}
  There exists an algebra structure on $\colim \; F$
  for each $F$. In fact, there is a left adjoint to
  the functor $p^\ast = (- \circ p) \colon \Alg_{/\cO}(\cD) \to
  \Alg_{\cC/\cO}(\cD)$ that gives this algebra structure functorially.
\end{cor}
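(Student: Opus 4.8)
The plan is to use the theory of operadic left Kan extensions from \cite{HA} in a relative/parametrized form, rather than constructing the adjoint by hand from the pointwise colimit formula. First I would recall the setup: we have coCartesian fibrations of $\infty$-operads $p\colon \cC^\otimes \to \cO^\otimes$ and $q\colon \cD^\otimes \to \cO^\otimes$, and restriction along $p$ defines a functor $p^\ast\colon \Alg_{/\cO}(\cD) \to \Alg_{\cC/\cO}(\cD)$ on the relevant full subcategories of $\Fun_{\cO^\otimes}(-,\cD^\otimes)$ cut out by the operad-map (inert-preservation) condition. The candidate left adjoint sends a lax $\cO$-monoidal functor $F$ to the operadic left Kan extension $M$ produced in \myref{lewis}; what remains is to upgrade that objectwise construction to an adjunction.

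The key steps, in order, would be: (1) invoke \cite[Proposition 3.1.1.20]{HA}, which does not merely assert the existence of an operadic left Kan extension for a single $F$ but in fact shows that the restriction functor from the $\infty$-category of (suitable) functors $\cO^\otimes \to \cD^\otimes$ extending a given $F$ along $p$ admits an initial object whenever $\cD$ is $\cO$-monoidally compatible with colimits; (2) package this fibrewise-over-$F$ into a genuine adjunction by appealing to the relative left Kan extension machinery of \cite[Section 4.3.2]{HTT} (or its operadic refinement), which guarantees that when the pointwise extensions exist for all objects of the source functor category, the restriction functor $p^\ast$ on the full functor $\infty$-categories $\Fun_{\cO^\otimes}(\cO^\otimes,\cD^\otimes) \to \Fun_{\cO^\otimes}(\cC^\otimes,\cD^\otimes)$ admits a left adjoint given by left Kan extension; (3) check that this left adjoint restricts to the full subcategories of maps of $\infty$-operads — i.e., that if $F$ is a lax $\cO$-monoidal functor then its operadic left Kan extension $M$ is again a lax $\cO$-monoidal functor (an $\cO$-algebra), which is precisely the content already verified inside the proof of \myref{lewis} via \cite[Theorem 3.1.2.3]{HA} and \cite[Proposition 3.1.1.16]{HA}; and (4) conclude that the adjunction unit/counit restrict accordingly, so $p^\ast$ between the algebra categories has the left adjoint $F \mapsto M$, with the first sentence of the corollary following immediately by applying this adjoint to any $F$.

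The main obstacle I expect is step (3) together with the bookkeeping in step (2): one must be careful that the ``operadic'' left Kan extension of \cite[Proposition 3.1.1.20]{HA} is compatible with the naive left Kan extension along $p$ in a way that lets the ordinary relative Kan extension adjunction of \cite{HTT} apply, and that the essential image of the left adjoint genuinely lands among operad maps rather than arbitrary sections — equivalently, that operadic left Kan extensions along a map of $\infty$-operads preserve the property of being a map of $\infty$-operads. This is handled by the same argument as in \myref{lewis}: the relevant diagram $\left((\cC^\otimes_\act)_{/X}\right)^{\vartriangleright} \to \cD^\otimes$ is an operadic colimit diagram relative to $q$ for each $X \in \cO$, and \cite[Proposition 3.1.1.16]{HA} shows operadic colimits relative to a coCartesian fibration descend to ordinary colimits in each fibre, which in particular are computed ``one color at a time'' compatibly with the inert maps, so the Segal/inert-preservation condition is automatic. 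Once this is in place, the adjunction is formal, and the functoriality claim in the corollary is just the statement that a left adjoint exists, not merely a pointwise one.
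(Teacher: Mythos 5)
Your proposal is correct and follows essentially the same route as the paper: the paper simply cites \cite[Corollary 3.1.3.4]{HA}, which is exactly the packaged statement that operadic left Kan extension along $p$ furnishes a left adjoint to $p^\ast$ on the algebra categories once the pointwise extensions of \myref{lewis} exist. Your steps (1)--(4) amount to an outline of the proof of that corollary, so the only difference is that you reconstruct the adjunction rather than invoking it directly.
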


\begin{proof}
  This follows from the proof of \Cref{lewis} by \cite[Corollary
  3.1.3.4]{HA}.
\end{proof}

Recall that for an $\cO$-algebra $A \in \Alg_{/\cO}(\cD)$, the slice
category $\cD_{/A}$ has the structure of an $\cO$-monoidal
$\infty$-category; in other words, $\cD_{/A}$ is the underlying
$\infty$-category of an $\cO$-monoidal $\infty$-category $\cD^\otimes_{/A_\cO}
\to \cO^\otimes$ (see \cite[Theorem 2.2.2.4]{HA}). The slice category
has the following universal property:

\begin{lemma}\label{slice}
  Let $\cC$ and $\cD$ be $\cO$-monoidal $\infty$-categories, $A$ be an
  $\cO$-algebra in $\cD$, and $F \colon \cC^\otimes \to \cD^\otimes$ be a
  lax $\cO$-monoidal functor. Lax $\cO$-monoidal lifts of $F$
  through the projection $\cD^\otimes_{/A_\cO} \to \cD^\otimes$
  then correspond to $\cO$-monoidal natural transformations $F \to A \circ
  p$, where $p \colon \cC^\otimes \to \cO^\otimes$ exhibits $\cC$ as
  $\cO$-monoidal. More precisely, there is a homotopy equivalence
  \[ \Map_{\Alg_{\cC/\cO}(\cD)}(F, A \circ p) \cong \{F\}
    \times_{\Alg_{\cC/\cO}(\cD)} \Alg_{\cC/\cO}(\cD_{/A}). \]
\end{lemma}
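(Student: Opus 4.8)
The plan is to identify the slice $\cO$-monoidal category $\cD^\otimes_{/A_\cO}$ concretely enough that lifting a lax $\cO$-monoidal functor through its projection becomes the same data as a lax $\cO$-monoidal cocone from $F$ to the constant functor at $A$. Recall from \cite[Construction 2.2.2.1 and Theorem 2.2.2.4]{HA} that $\cD^\otimes_{/A_\cO}$ is built so that its objects over $\langle n\rangle$ are $n$-tuples $(D_i \to A_i)$ of arrows in $\cD$, with the $\cO$-monoidal structure twisted by the algebra multiplication of $A$; the projection $\pi\colon\cD^\otimes_{/A_\cO}\to\cD^\otimes$ forgets the targets, and $A\circ p$ itself is a canonical section giving the ``terminal'' lift. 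So first I would spell out, using the explicit model of $\cD^\otimes_{/A_\cO}$, that a section of $\cC^\otimes \to \cO^\otimes$ landing in $\cD^\otimes_{/A_\cO}$ and lifting $F$ through $\pi$ is precisely a morphism $F \to A\circ p$ of sections together with the data making it a map of $\infty$-operads — but since inert morphisms in $\cD^\otimes_{/A_\cO}$ are detected by $\pi$ (the target coordinates carry no extra constraint for inerts), a lax $\cO$-monoidal lift is exactly a morphism $F\to A\circ p$ in $\Alg_{\cC/\cO}(\cD)$.

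With that identification in hand, the statement becomes a comparison of mapping spaces, and I would prove it by a general fibration argument rather than by hand. The key observation is that post-composition with $\pi$ induces a functor $\pi_*\colon \Alg_{\cC/\cO}(\cD_{/A}) \to \Alg_{\cC/\cO}(\cD)$, and the claim is that its fiber over the object $F$ is the mapping space $\Map_{\Alg_{\cC/\cO}(\cD)}(F, A\circ p)$. The cleanest route is to show that $\pi_*$ is a \emph{right fibration} whose fiber over an object $G$ is $\Map_{\Alg_{\cC/\cO}(\cD)}(G, A\circ p)$. This in turn follows by unwinding: $\Alg_{\cC/\cO}(\cD_{/A}) = \Fun^{\mathrm{lax}}_\cO(\cC, \cD_{/A})$ is a full subcategory of $\Fun_{\cO^\otimes}(\cC^\otimes, \cD^\otimes_{/A_\cO})$, and the slice construction on $\cD^\otimes$ is compatible with taking functor categories, so that $\Fun_{\cO^\otimes}(\cC^\otimes, \cD^\otimes_{/A_\cO})$ is equivalent to $\Fun_{\cO^\otimes}(\cC^\otimes,\cD^\otimes)_{/(A\circ p)}$ — an overcategory — and the projection to $\Fun_{\cO^\otimes}(\cC^\otimes,\cD^\otimes)$ is the canonical right fibration whose fiber over $G$ is $\Map(G, A\circ p)$. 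Restricting to the lax-monoidal (operad-map) locus on both sides preserves this, because an operad-map condition on $\cC^\otimes \to \cD^\otimes_{/A_\cO}$ is, by the previous paragraph, exactly the operad-map condition on the underlying $\cC^\otimes \to \cD^\otimes$ together with no constraint on the structure map to $A\circ p$. Taking fibers over $F$ then yields the displayed equivalence.

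Concretely the steps are: (1) recall the explicit description of $\cD^\otimes_{/A_\cO}$ and its projection from \cite[\S2.2.2]{HA}, and record that inert edges are created by $\pi$; (2) prove the compatibility $\Fun_{\cO^\otimes}(\cC^\otimes, \cD^\otimes_{/A_\cO}) \simeq \Fun_{\cO^\otimes}(\cC^\otimes, \cD^\otimes)_{/A\circ p}$, e.g.\ by the universal property of slices together with the fact that limits of $\infty$-categories commute with slicing, or by citing the relevant statement in \cite{HA}; (3) observe that under this equivalence the lax-$\cO$-monoidal (operad-map) conditions on the two sides match, so the equivalence descends to $\Alg_{\cC/\cO}(\cD_{/A}) \simeq \Alg_{\cC/\cO}(\cD)_{/A\circ p}$; (4) take the fiber over $F$ of the projection to $\Alg_{\cC/\cO}(\cD)$, using that the fiber of the slice projection $\cE_{/x}\to\cE$ over $y$ is $\Map_\cE(y,x)$.

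The main obstacle is step (2)–(3): making precise that the slice $\cO$-monoidal structure of \cite[Theorem 2.2.2.4]{HA} is computed ``pointwise'' enough that forming functor $\infty$-categories out of $\cC^\otimes$ turns the slice $\cD^\otimes_{/A_\cO}$ into an honest overcategory of functor categories, and that this identification is compatible with the operad-map constraints defining lax $\cO$-monoidal functors. One has to be careful that the twisting of the monoidal structure by $\mu_A$ does not interfere — it does not, because a lift of $F$ only ever needs to be compatible with the monoidal structure through structure maps that $\pi$ detects — but spelling this out against Lurie's construction is where the real work lies. Once that bookkeeping is done, everything else is a formal manipulation of fibrations and fibers.
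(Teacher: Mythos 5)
Your proposal is correct and takes essentially the same route as the paper's proof: both come down to unwinding Lurie's slice construction from \cite[Section 2.2.2]{HA} so that the fiber of $\Alg_{\cC/\cO}(\cD_{/A}) \to \Alg_{\cC/\cO}(\cD)$ over $F$ is recognized as a model of $\Map_{\Alg_{\cC/\cO}(\cD)}(F, A \circ p)$, using that inert/coCartesian conditions are detected by the projection $\pi$. The only real difference is one of packaging: the paper computes that fiber directly at the simplicial-set level and identifies it with the right-mapping-space model $\Hom^R_{\Alg_{\cC/\cO}(\cD)}(F, A \circ p)$ of \cite[Section 1.2.2]{HTT}, whereas you first establish the global equivalence $\Alg_{\cC/\cO}(\cD_{/A}) \simeq \Alg_{\cC/\cO}(\cD)_{/A\circ p}$ and then take fibers of the slice projection.
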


\begin{proof}
  The definition of the $\cO$-monoidal structure of the slice category
  \cite[Section 2.2.2]{HA}, says that morphisms of simplicial sets
  over $\cO^\otimes$ from $Y \to \cO^\otimes$ to 
  $\cD^\otimes_{/A_\cO} \to \cO^\otimes$ are in bijection with commutative diagrams of the form:
  \[ \xymatrix{Y \ar[r]^-{i_1} \ar[d] & Y \times
      \Delta^1 \ar[r]^-{\pi_1}
      \ar[d] & Y \ar[d]\\        
      \cO^\otimes \ar[r]_A & \cD^\otimes \ar[r]_q & \cO^\otimes,}\]
  where $i_1$ includes $Y$ in $Y \times \Delta^1$ as $Y \times \{1\}$, $q$ is the coCartesian fibration of $\infty$-operads
      that exhibits $\cD$ as $\cO$-monoidal, and $\pi_1$ denotes the projection onto the first factor.
  
  This implies that maps from a simplical set $Z$ into the
  quasi-category $\Alg_{\cC/\cO}(\cD_{/A})$ are in bijection with
  morphisms
  $\tilde{F}\colon Z \times \cC^\otimes \times \Delta^1 \to
  \cD^\otimes$ such that:

  \begin{enumerate}
  \item The following diagram commutes:
    \[ \xymatrix{Z \times \cC^\otimes \ar[r]^-{i_1} \ar[d]_{p \circ \pi_2} &
        Z \times \cC^\otimes \times \Delta^1 \ar[r]^-{\pi_{1,2}}
        \ar[d]_{\tilde{F}} & 
        Z \times \cC^\otimes \ar[d]^{p \circ \pi_2} \\
      \cO^\otimes \ar[r]_A & \cD^\otimes \ar[r]_q & \cO^\otimes }\]
    where
    \begin{itemize}
    \item $p \colon \cC^\otimes \to \cO^\otimes$ and $q \colon \cD^\otimes \to
      \cO^\otimes$ are the coCartesian fibrations of $\infty$-operads
      that exhibit $\cC$ and $\cD$ as $\cO$-monoidal
      $\infty$-categories,
    \item $i_1$ includes $Z \times \cC^\otimes$ as
      $Z \times \cC^\otimes \times \{1\}$,
      \item $\pi_{1,2}$ denotes the projection onto the first two factors, and
    \end{itemize}
  \item $\tilde{F}$ sends triples $(z,f,\sigma)$ with $f$ an inert morphism
    of $\cC^\otimes$ to inert morphisms of $\cD^\otimes$.
  \end{enumerate}

  The forgetful map $\Alg_{\cC/\cO}(\cD_{/A}) \to \Alg_{\cC/\cO}(\cD)$
  induces the map restricting $\tilde{F}$ to
  $Z \times \cC^\otimes \times \{0\}$. So, putting it all together, we
  have that morphisms of simplicial sets from $Z$ to
  $\{F\} \times_{\Alg_{\cC/\cO}(\cD)} \Alg_{\cC/\cO}(\cD_{/A})$ are
  given by maps
  $\tilde{F} : Z \times \cC^\otimes \times \Delta^1 \to \cD^\otimes$
  such that:
  \begin{enumerate}
  \item $\tilde{F}|_{Z \times \cC^\otimes \times \{0\}} = F \circ
    \pi_2$,
  \item $\tilde{F}|_{Z \times \cC^\otimes \times \{1\}} = A \circ p
    \circ \pi_2$,
  \item $\tilde{F}$ sends triples $(z,f,\sigma)$ with $f$ an inert
    morphism of $\cC^\otimes$ to inert morphisms of $\cD^\otimes$ and satisfies $q \circ \tilde{F} = p \circ \pi_2$, where $\pi_2$ projects onto the second factor of $Z \times \cC^\otimes \times \Delta^1$.
  \end{enumerate}

  With the above description,
  $\{F\} \times_{\Alg_{\cC/\cO}(\cD)} \Alg_{\cC/\cO}(\cD_{/A})$ is
  readily seen to be isomorphic to the simplicial set
  $\Hom_\cA(F, A \circ p) = \{F\} \times_{\cA} \cA^{\Delta^1}
  \times_{\cA} \{A \circ p\}$ (where $\cA :=\Alg_{\cC/\cO}(\cD)$),
  which is one of the basic models for the mapping space
  $\Map_{\Alg_{\cC/\cO}(\cD)}(F, A \circ p)$.
\end{proof}

\begin{thm}\label{colimup}
  The $\cO$-algebra $M$ from \Cref{lewis} is characterized by the
  following universal property: For any $\cO$-algebra $A$ in $\cD$ the
  space of $\cO$-algebra maps $\Map_{\Alg_{/\cO}(\cD)}(M, A)$
  is homotopy equivalent to the space of lax $\cO$-monoidal lifts
  \[ \xymatrix{
    & \cD^\otimes_{/A_\cO} \ar[d] \\
    \cC^\otimes \ar[r] \ar@{-->}[ur] & \cD^\otimes.}\]
\end{thm}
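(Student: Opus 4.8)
The plan is to combine the adjunction from \myref{ltadj} with the universal property of the $\cO$-monoidal slice from \myref{slice}; once those two inputs are in place the argument is essentially formal, and the content of the theorem has already been absorbed into \myref{lewis}, \myref{ltadj}, and \myref{slice}.

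First, by \myref{ltadj} the $\cO$-algebra $M$ produced in \myref{lewis} is the value at $F$ of a left adjoint $L$ to the restriction functor $p^\ast = (-\circ p)\colon \Alg_{/\cO}(\cD) \to \Alg_{\cC/\cO}(\cD)$, where $p\colon \cC^\otimes \to \cO^\otimes$ exhibits $\cC$ as $\cO$-monoidal. Hence, for any $\cO$-algebra $A$ in $\cD$, there is a homotopy equivalence, natural in $A$,
\[ \Map_{\Alg_{/\cO}(\cD)}(M, A) \simeq \Map_{\Alg_{\cC/\cO}(\cD)}(F, p^\ast A) = \Map_{\Alg_{\cC/\cO}(\cD)}(F, A\circ p). \]

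Next I would invoke \myref{slice}, which rewrites the right-hand side as the fiber product
\[ \{F\} \times_{\Alg_{\cC/\cO}(\cD)} \Alg_{\cC/\cO}(\cD_{/A}), \]
the map $\Alg_{\cC/\cO}(\cD_{/A}) \to \Alg_{\cC/\cO}(\cD)$ being induced by postcomposition with the projection $\pi\colon \cD^\otimes_{/A_\cO} \to \cD^\otimes$. Finally I would observe that this fiber product is, by construction, exactly the space of lax $\cO$-monoidal lifts of $F$ through $\pi$: an object of $\Alg_{\cC/\cO}(\cD_{/A})$ is precisely a lax $\cO$-monoidal functor $\tilde F\colon \cC^\otimes \to \cD^\otimes_{/A_\cO}$, i.e. a candidate for the dotted arrow in the displayed triangle, and pulling back along the point $F\in\Alg_{\cC/\cO}(\cD)$ records the datum of an identification $\pi\circ\tilde F \simeq F$. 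Chaining the three identifications yields the asserted equivalence, and its naturality in $A$ shows it characterizes $M$ up to equivalence of $\cO$-algebras.

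The only points demanding real care are bookkeeping ones: matching the explicit simplicial-set model of the fiber product used in \myref{slice} with the homotopy-invariant ``space of lifts'' in the statement — i.e. checking that the relevant forgetful map of quasi-categories behaves like a fibration, or simply quoting that \myref{slice} already presents the pullback as a model for a mapping space — and confirming that the $M$ of \myref{lewis} is literally the value of the left adjoint of \myref{ltadj}, not merely abstractly equivalent to the object $\colim F$ underlying it. Neither is a genuine obstacle, so I expect the proof to be short: essentially a two-line composition of \myref{ltadj} and \myref{slice}.
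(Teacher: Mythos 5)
Your proposal is correct and is exactly the paper's argument: the proof given there is literally ``Just combine \myref{ltadj} and \myref{slice},'' and your write-up simply spells out the two-step composition (adjunction, then the identification of $\Map_{\Alg_{\cC/\cO}(\cD)}(F, A\circ p)$ with the space of lifts) that this entails.
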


\begin{proof}
  Just combine \Cref{ltadj} and \Cref{slice}.
\end{proof}

\section{The universal multiplicative property of the Thom spectrum}\label{sec:thom}

Let $n \ge 0$ and $R$ be an $\Es{n+1}$-ring spectrum, then the
$\infty$-category of (left) $R$-module spectra, $\Mod_R$, can be
equipped with the structure of an $\Es{n}$-monoidal $\infty$-category,
see \cite[Corollary 5.1.2.6]{HA}. This means that we can talk about
$\Es{n}$-algebra objects in $\Mod_R$. We will call these
\emph{$\Es{n}$ $R$-algebras} and denote the $\infty$-category they
form by $\Alg_R^{\Es{n}}$, that is, we set
$\Alg_R^{\Es{n}} = \Alg_{/\Es{n}}(\Mod_R)$. Our multiplicative Thom
spectra will be $\Es{n}$ $R$-algebras.

As in \cite{ABGHRinfcat}, we
associate two $\infty$-groupoids to $R$:

\begin{itemize}
  \item Let $\Pic(R)$ be the subcategory of invertible $R$-modules and
    all equivalences between them, i.e., the core of the subcategory of $\Mod_R$ on the invertible objects. 
  \item Let $BGL_1R$ denote the subcategory of modules equivalent to
    $R$ and all equivalences between them. This is a full subcategory
    of $\Pic(R)$, namely, the component of the $R$-module $R$.
\end{itemize}

By \Cref{core} $\left(\Mod_R\right)^\simeq$ inherits an
$\Es{n}$-monoidal structure from $\Mod_R$. Since the proposed sets of
objects for both $\Pic(R)$ and $BGL_1R$ are closed under tensor
products, by \cite[Proposition 2.2.1.1]{HA}, both categories inherit
the structure of $\Es{n}$-monoidal $\infty$-groupoids, or
equivalently, $\Es{n}$-spaces. By construction both are grouplike.

\begin{defn}
A \emph{local system} of invertible $R$-modules on a space $X$ is simply a map $f
\colon X \to \Pic(R)$. The \emph{Thom spectrum} of $f$ is given by
\[M_Rf = Mf := \colim \left( X \to \Pic(R) \to \Mod_R \right).\]
\end{defn}

This construction of Thom spectrum appears in \cite{ABGHRinfcat} where
it is shown to agree with the definitions in \cite{MS} and \cite{LMS}.
We will often apply the above definition in the special case that the
map $f$ factors through $BGL_1R$.

\subsection{The universal property and its consequences}

Applying \Cref{lewis} to $\Mod_R$ we directly obtain Lewis's
theorem, or rather the generalization from
the sphere spectrum to arbitrary $R$ given in \cite{Umkehr}:
 
\begin{cor}
  If $X$ is an $\Es{n}$-space and $f\colon X \to \Pic(R)$ is an
  $\Es{n}$-map, then $Mf$ becomes an $\Es{n}$ $R$-algebra.
\end{cor}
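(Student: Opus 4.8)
The plan is to deduce this corollary directly from \myref{lewis} by choosing the right $\cO$-monoidal input data. Set $\cO = \Es{n}$. The functor whose colimit we want is the composite $X \to \Pic(R) \to \Mod_R$, so I need to realize this as (the underlying functor of) a lax $\Es{n}$-monoidal functor $\cC^\otimes \to \cD^\otimes$ with $\cD = \Mod_R$ and $\cC$ some small $\Es{n}$-monoidal $\infty$-category with underlying $\infty$-category $X$.

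First I would take $\cC$ to be $X$ regarded as an $\Es{n}$-monoidal $\infty$-groupoid. This uses the hypothesis that $X$ is an $\Es{n}$-space: an $\Es{n}$-space is by definition an $\Es{n}$-algebra in spaces, equivalently (via \cite[Example 2.4.2.4]{HA} or the discussion following \myref{core}) a coCartesian fibration $X^\otimes \to \Es{n}^\otimes$ all of whose morphisms are coCartesian. This presents $X$ as a (small) $\Es{n}$-monoidal $\infty$-category. Second, I would produce the lax $\Es{n}$-monoidal functor. The map $f\colon X \to \Pic(R)$ is an $\Es{n}$-map, i.e.\ an $\Es{n}$-monoidal functor $X^\otimes \to \Pic(R)^\otimes$, where $\Pic(R)$ carries the $\Es{n}$-monoidal structure on the core constructed via \myref{core} (this is exactly the structure discussed right before the corollary, using that the invertible objects are closed under $\otimes$ and \cite[Proposition 2.2.1.1]{HA}). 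Composing with the $\Es{n}$-monoidal inclusion $\Pic(R)^\otimes = \left(\Mod_R^\otimes\right)_\coCart \hookrightarrow \Mod_R^\otimes$ of \myref{core} gives an $\Es{n}$-monoidal, hence in particular lax $\Es{n}$-monoidal, functor $F\colon X^\otimes \to \Mod_R^\otimes$ whose underlying functor is the composite in the definition of $Mf$.

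Third, I would check the target hypothesis: $\Mod_R$ is $\Es{n}$-monoidally cocomplete in the sense of \myref{moncocompl}. This is where I would invoke \cite[Corollary 5.1.2.6]{HA} together with \cite[Proposition 3.1.1.20]{HA} or the discussion around \cite[Section 4.8.1]{HA}: for $R$ an $\Es{n+1}$-ring, $\Mod_R$ is a presentable $\Es{n}$-monoidal $\infty$-category whose tensor product preserves colimits in each variable separately, which is precisely Definition \myref{moncocompl} since $\Es{n}$ has a single color. With $\cO = \Es{n}$, $\cC = X$, $\cD = \Mod_R$, and $F$ as above, \myref{lewis} now applies and yields an $\Es{n}$-algebra structure on $M(\ast) = \colim(X \to \Pic(R) \to \Mod_R) = Mf$, i.e.\ an $\Es{n}$ $R$-algebra structure on $Mf$, as claimed.

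I expect the only genuine subtlety to be the bookkeeping in the first two steps: being careful that ``$\Es{n}$-space'' and ``$\Es{n}$-map'' as they appear in applications really do unpack to the coCartesian-fibration data that \myref{lewis} and \myref{core} consume, and that the $\Es{n}$-monoidal structure on $\Pic(R)$ coming from \myref{core} agrees with the one implicitly used when one says $f$ is an $\Es{n}$-map of spaces. Since all of this is either definitional or already recorded in \myref{core} and the paragraph preceding the corollary, the proof is genuinely just ``apply \myref{lewis}'', and I would write it that tersely, pointing only to \myref{core}, \cite[Proposition 2.2.1.1]{HA}, and \cite[Corollary 5.1.2.6]{HA} for the three inputs.
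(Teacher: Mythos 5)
Your proposal is correct and is exactly the paper's argument: the paper proves this corollary by simply applying \myref{lewis} with $\cC = X$, $\cD = \Mod_R$, and $F$ the composite of $f$ with the inclusion $\Pic(R) \hookrightarrow \Mod_R$, relying on \myref{core} and the preceding discussion for the monoidal structures just as you do. You have merely spelled out the bookkeeping that the paper leaves implicit.
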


\begin{defn}
  Given an $\Es{n}$ $R$-algebra $A$, we define $\Es{n}$-spaces
  $\Pic(R)_{\downarrow A}$ and $BGL_1R_{\downarrow A}$ by requiring the following squares
  to be pullbacks of $\Es{n}$-monoidal categories:
  \[ \xymatrix{
    BGL_1R_{\downarrow A} \ar[r] \ar[d] & \Pic(R)_{\downarrow A} \ar[r] \ar[d] & (\Mod_R)_{/A} \ar[d] \\
    BGL_1R \ar[r] & \Pic(R) \ar[r] & \Mod_R, \\
  }\]
  where $(\Mod_R)_{/A}$ is the slice of $\Mod_R$ over the underlying
  $R$-module of $A$.
\end{defn}

Alternatively, we could define $BGL_1R_{\downarrow A}$ and
$\Pic(R)_{\downarrow A}$ merely as $\infty$-groupoids by requiring the
above diagram to consist of pullback squares of $\infty$-categories.
They would then inherit $\Es{n}$-monoidal structures from
$(\Mod_R)_{/A}$ in the same way that $BGL_1R$ and $\Pic(R)$ inherit
their structure from $\Mod_R$.

We can think of the objects of $\Pic(R)_{\downarrow A}$ as invertible
$R$-modules $M$ equipped with a map $M \to A$ of $R$-modules, and of
the morphisms as commuting triangles where the arrow $M \to M'$ is an
equivalence.
  
\begin{warning}
  Our choice of notation for $\Pic(R)_{\downarrow A}$ and
  $BGL_1R_{\downarrow A}$ is meant to distinguish these categories
  from the usual slice categories $\Pic(R)_{/A}$ and $BGL_1R_{/A}$,
  which are only defined if $R \to A$ is an equivalence. Even when
  $R=A$ and all of these are defined, they differ. Indeed,
  $\Pic(R)_{/R}$ is a slice of an $\infty$-groupoid and thus
  contractible, but $\Pic(R)_{\downarrow R}$ has many components. For
  example, the component of the identity map $R \to R$ is equivalent
  to $\Pic(R)_{/R}$ and thus contractible; but the component of the
  zero map $R \to R$ is equivalent to $BGL_1R$.
\end{warning}

We can now state a characterization of the $\Es{n}$-structure on $Mf$:

\begin{thm}\label{thomup}
  Let $X$ be an $\Es{n}$-space and $f\colon X \to \Pic(R)$ be an
  $\Es{n}$-map. The $\Es{n}$-algebra structure of $Mf$ is
  characterized by the following universal property: the space of
  $\Es{n}$ $R$-algebra maps $\Map_{\Alg_R^{\Es{n}}}(Mf, A)$ is
  equivalent to the space of $\Es{n}$-lifts of $f$ indicated below:
  \[ \xymatrix{
    & \Pic(R)_{\downarrow A} \ar[d] \\
    X \ar[r]_-f \ar@{-->}[ur] & \Pic(R). } \]
\end{thm}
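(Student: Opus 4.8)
The plan is to obtain \myref{thomup} as an instance of \myref{colimup}. I would take $\cO$ to be the $\Es{n}$-operad, take $\cC$ to be $X$ regarded as an $\Es{n}$-monoidal $\infty$-groupoid, take $\cD=\Mod_R$, and take $F\colon X^\otimes\to\Mod_R^\otimes$ to be the composite of $f\colon X^\otimes\to\Pic(R)^\otimes$ with the inclusion $\Pic(R)^\otimes\hookrightarrow\Mod_R^\otimes$. The hypotheses of \myref{colimup} are met: $\Mod_R$ is $\Es{n}$-monoidally cocomplete because $R$ is $\Es{n+1}$ (so that $\Mod_R$ is $\Es{n}$-monoidal) and because $\Mod_R$ is presentable with tensor product preserving colimits in each variable separately; and $F$ is lax $\Es{n}$-monoidal --- in fact $\Es{n}$-monoidal --- since $f$ is a map of $\Es{n}$-spaces and the inclusion $\Pic(R)^\otimes\hookrightarrow\Mod_R^\otimes$ is $\Es{n}$-monoidal, being the inclusion of a full $\Es{n}$-monoidal subcategory (\cite[Proposition 2.2.1.1]{HA}) followed by the inclusion $(\Mod_R)^\otimes_\coCart\hookrightarrow\Mod_R^\otimes$ of \myref{core}, which sends coCartesian morphisms to coCartesian morphisms by construction. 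The underlying object of the $\Es{n}$-algebra $M$ produced by \myref{lewis} and characterized by \myref{colimup} is $\colim F=Mf$, so it is precisely the $\Es{n}$-algebra structure on $Mf$ referred to in the statement. Hence \myref{colimup} identifies $\Map_{\Alg_R^{\Es{n}}}(Mf,A)$ with the space of lax $\Es{n}$-monoidal lifts of $F$ through $(\Mod_R)^\otimes_{/A_{\Es{n}}}\to\Mod_R^\otimes$, which by \myref{slice} is the fiber product $\{F\}\times_{\Alg_{X/\Es{n}}(\Mod_R)}\Alg_{X/\Es{n}}\bigl((\Mod_R)_{/A}\bigr)$.

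It then remains to rewrite this fiber product in terms of the target $\Pic(R)_{/A}$. By the definition of $\Pic(R)_{/A}$, we have $\Pic(R)^\otimes_{/A}\simeq\Pic(R)^\otimes\times_{\Mod_R^\otimes}(\Mod_R)^\otimes_{/A}$ as a pullback of $\Es{n}$-monoidal categories, and the functor $\Alg_{X/\Es{n}}(-)=\Fun^{\mathrm{lax}}_{\Es{n}}(X,-)$ preserves this pullback (lax $\cO$-monoidal functors out of a fixed source preserve limits in the target). Since $F$ factors through $\Pic(R)$ via $f$, the vertex $F$ of $\Alg_{X/\Es{n}}(\Mod_R)$ is the image of the vertex $f$ of $\Alg_{X/\Es{n}}(\Pic(R))$, so the pasting law for pullbacks identifies the fiber product above with $\{f\}\times_{\Alg_{X/\Es{n}}(\Pic(R))}\Alg_{X/\Es{n}}(\Pic(R)_{/A})$.

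Finally, I would observe that lax $\Es{n}$-monoidal functors coincide with $\Es{n}$-monoidal ones here. The $\Es{n}$-monoidal $\infty$-categories $X$, $\Pic(R)$ and $\Pic(R)_{/A}$ have underlying $\infty$-groupoids, so by the remark following \myref{core} they are $\Es{n}$-monoids in spaces and their structure maps $X^\otimes,\Pic(R)^\otimes,\Pic(R)^\otimes_{/A}\to\Es{n}^\otimes$ are left fibrations. In particular every morphism of $\Pic(R)^\otimes_{/A}$ is coCartesian over $\Es{n}^\otimes$, so any map of $\infty$-operads into it over $\Es{n}^\otimes$ automatically preserves coCartesian morphisms and is $\Es{n}$-monoidal; thus $\Alg_{X/\Es{n}}(\Pic(R)_{/A})=\Fun^\otimes_{\Es{n}}(X,\Pic(R)_{/A})$, and likewise for $\Pic(R)$. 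The fiber product of the previous paragraph is therefore exactly the space of $\Es{n}$-monoidal lifts of $f$ through $\Pic(R)_{/A}\to\Pic(R)$, which is the space of $\Es{n}$-lifts displayed in \myref{thomup}. The step I expect to require the most care is the middle one: checking that the diagram of $\Es{n}$-spaces in the statement genuinely computes the same fiber product that \myref{colimup} and \myref{slice} produce --- which is what the preservation of pullbacks by $\Alg_{X/\Es{n}}(-)$ together with the left-fibration observation are meant to guarantee --- while keeping in mind that $\Pic(R)_{/A}$ is, despite the notation, not literally a slice category.
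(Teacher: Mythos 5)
Your proof is correct and follows essentially the same route as the paper's: both deduce the statement from \myref{colimup} (hence from \myref{lewis} and \myref{slice}) and then identify the lax $\Es{n}$-monoidal lifts of $F$ to $(\Mod_R)_{/A}$ with $\Es{n}$-lifts of $f$ to $\Pic(R)_{/A}$. The paper carries out that last identification by observing that an edge of $(\Mod_R)_{/A}^\otimes$ is coCartesian over $\Es{n}^\otimes$ exactly when its image in $\Mod_R^\otimes$ is, while you reorganize it via preservation of the defining pullback by $\Alg_{X/\Es{n}}(-)$ and the left-fibration property of $\Es{n}$-spaces; this is only a difference in bookkeeping, not in the underlying argument.
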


\begin{proof}
  \Cref{colimup} tells us directly that
  $\Map_{\Alg_R^{\Es{n}}}(Mf, A)$ is the space of lifts of
  $X \xrightarrow{f} \Pic(R) \to \Mod_R$ to a \emph{lax}
  $\Es{n}$-functor $X \to (\Mod_R)_{/A}$. Now, edges in
  $(\Mod_R)_{/A}^\otimes$ are coCartesian for the projection
  $(\Mod_R)_{/A}^\otimes \to \Es{n}^\otimes$ if and only if their
  image in $\Mod_R^\otimes$ is coCartesian. So a lax $\Es{n}$-functor
  lifting $f$ is automatically monoidal and factors through
  $\Pic(R)_{\downarrow A}$.
\end{proof}

\begin{notn}\label{if-A-is-En+1}
  If $A$ is an \emph{$\Es{n+1}$-ring spectrum under $R$}, that is an $\Es{n+1}$-ring spectrum equipped with an $\Es{n+1}$-morphism $\eta \colon R \to A$, we use $\Ind{R}{A}$ to denote the cocontinuous $\Es{n}$-monoidal functor $\Mod_R \to \Mod_A$ induced by $\eta$ (see \cite[Proposition 7.1.2.6]{HA}). Since this functor is $\Es{n}$-monoidal, there is also an induced functor $\Alg_R^{\Es{n}} \to \Alg_A^{\Es{n}}$ for which we also use the same notation.
\end{notn}

\begin{rmk}\label{under-vs-Ralg}
  Let us explain the relation between the notions of $\Es{n+1}$-ring spectrum under $R$ and $\Es{n}$ $R$-algebra. First, if $A$ is an $\Es{n+1}$-ring spectrum under $R$, we can canonically equip it with the structure of an $\Es{n}$ $R$-algebra: by \cite[Corollary 7.3.2.7]{HA}, the right adjoint of $\Ind{R}{A} \colon \Mod_R \to \Mod_A$ is \emph{lax} $\Es{n}$-monoidal, so it preserves $\Es{n}$-algebra objects. In particular, this right adjoint lets us view the unit object $A$ as an $\Es{n}$ $R$-algebra.
  
  Now, for $n=\infty$, the concepts of $\Es{\infty}$ $R$-algebra and $\Es{\infty}$-ring spectrum under $R$ actually coincide by \cite[Variant 7.1.3.8]{HA}. They do not for $n < \infty$, but an $\Es{n}$ $R$-algebra structure on $A$ still makes $A$ an $\Es{n}$-ring spectrum under $R$ \cite[Warning 7.1.3.9]{HA}, so for such an $A$ there is still an $\Es{n-1}$-monoidal functor $\Ind{R}{A}$.
\end{rmk}

As a corollary, we easily obtain another of Lewis' results
\cite[IX.7.1]{LMS}. To state it we will use the following notation.

\begin{notn}
  For $m\le n$, let $\ind{m}{n}$ be the free $\Es{n}$ $R$-algebra on an $\Es{m}$
  $R$-algebra, i.e., the left adjoint to the forgetful functor
  $\Alg_R^{\Es{n}} \to \Alg_R^{\Es{m}}$. Likewise, we write $\ind{m}{n}(X)$ for the free $\Es{n}$-space on an $\Es{m}$-space $X$.
\end{notn}

\begin{cor}\label{thomlewisuniversal}
  For any pointed map of spaces $f\colon X \to BGL_1R$, there is a
  natural equivalence
\[\ind{0}{n}(Mf) \xrightarrow{\sim} M \bar{f},\]
where $\bar{f} \colon \ind{0}{n}(X) \to BGL_1R$ is the universal map induced by the pointed map $f$. When $X$ is connected, then $\ind{0}{n}(X)$ is equivalent to $\Omega^n \Sigma^n X$ and $\bar{f}$ is the $n$-fold loop map adjoint to the pointed map $\Sigma^nf$.
\end{cor}

\begin{proof}
  We will check that both sides represent the same functor on
  $\Alg_R^{\Es{n}}$; this yields the claim. Let $A$ be an arbitrary $\Es{n}$ $R$-algebra. By adjunction, we
  have that
  $\Map_{\Alg_R^{\Es{n}}}(\ind{0}{n}(Mf), A) \cong
  \Map_{\Alg_R^{\Es{0}}}(Mf,A)$
  and by \Cref{thomup}, this is the space $L_0$ of lifts of $f$ to an
  $\Es{0}$-map (i.e., a pointed map) $X \to BGL_1R_{\downarrow A}$. Moreover, and again
  by \Cref{thomup}, $\Map_{\Alg_r^{\Es{n}}}(M\bar{f},A)$ is the space
  $L_n$ of lifts of $\bar{f}$ to an $\Es{n}$-map
  $\ind{0}{n}(X) \to BGL_1R_{\downarrow A}$.

  Since $\ind{0}{n}(X)$ is the free $\Es{n}$-space on the
  pointed space $X$, the solid vertical maps (induced by
  composition with the unit $X \to \ind{0}{n}(X)$) in the
  following diagram of fiber sequences are equivalences:
  \[ \xymatrix{
    L_n \ar[r] \ar@{-->}[d] & \Map_{\Es{n}}(\ind{0}{n}(X), BGL_1R_{\downarrow A}) \ar[r] \ar[d]^\sim &  \Map_{\Es{n}}(\ind{0}{n}(X), BGL_R) \ar[d]^\sim \\
    L_0 \ar[r] & \Map_{\Es{0}}(X, BGL_1R_{\downarrow A}) \ar[r] &
    \Map_{\Es{0}}(X, BGL_1R). }\]
  It follows that the induced map $L_0 \to L_n$ is an equivalence, too.
\end{proof}

\begin{rmk}
  This result also effortlessly extends to more general operads in
  place of $\Es{n}$, which in fact is the version proved by Lewis. We
  can replace $\Es{0}$ as well; for example, the same argument as used
  above also proves that if $f$ is an $m$-fold loop map and $\bar{f}$
  denotes the universal extension to an $n$-fold loop for $n \ge m$,
  then $\ind{m}{n}(Mf) \xrightarrow{\sim} M \bar{f}$.
\end{rmk}

\subsection{$\Es{n}$-orientations}

The Thom spectrum $MG$ for a topological group $G \to O$ arises
classically in the theory of orientations of vector bundles,
representing the universal cohomology theory that orients manifolds
with structure group $G$. This point of view admits a generalization
to $\Es{n}$-ring spectra, as we briefly summarize now; see
\cite{ABGHRinfcat} and \cite{ABGHRunits} for a comparison of the
various notions of orientations in the $\Es{1}$ and $\Es{\infty}$
cases, as well as~\cite{Umkehr}.

For the remainder of this section let $R$ be an $\Es{n+1}$-ring
spectrum and $A$ be an $\Es{n+1}$-ring spectrum under $R$ (see
\Cref{if-A-is-En+1}).

\begin{conv}\label{0-fold-loop}
  For $n=0$ we make special arrangments: by \emph{grouplike
    $\Es{0}$-space} we mean a connected pointed space, and by
  \emph{$0$-fold loop map} we mean a pointed map with connected
  domain.
\end{conv}

\begin{defn} Let $B(R,A)$ be the full subgroupoid of $\Pic(R)_{\downarrow A}$
  consisting of morphisms of $R$-modules $h \colon M \to A$ such that
  the adjoint $\adjoint{h} \colon \Ind{R}{A}(M) \to A$ is an
  equivalence.
\end{defn}

To study the relation between $B(R,A)$ and orientations, we need a
lemma about these $R$-module morphisms:

\begin{lemma}\label{A-eqs-both-ways}
  Let $h_i \colon M_i \to A$ for $i=1,2$ be two morphisms of
  $R$-modules. Then $\adjoint{(h_1 \otimes_{\downarrow A} h_2)}$ is an
  equivalence of $A$-modules if and only if both $\adjoint{h_1}$ and
  $\adjoint{h_2}$ are (here $\otimes_{\downarrow A}$ denotes the tensor product
  in $\Pic(R)_{\downarrow A}$).
\end{lemma}

\begin{proof}
  Recall that in the monoidal structure of $\Pic(R)_{\downarrow A}$,
  $h_1 \otimes_{\downarrow A} h_2$ is given by the composite
  $M \otimes_R N \xrightarrow{h_1 \otimes_R h_2} A \otimes_R A
  \xrightarrow{\mu_A} A$, where $\mu_A$ is the multiplication on $A$.
  Since $\Ind{R}{A}$ is $\Es{n}$-monoidal, we see that
  $\adjoint{(h_1 \otimes_{\downarrow A} h_2)} = \adjoint{h_1} \otimes_A
  \adjoint{h_2}$.

  This makes it clear that if both $\adjoint{h_i}$ are equivalences,
  then so is $\adjoint{(h_1 \otimes_{\downarrow A} h_2)}$.

  Now assume $\adjoint{(h_1 \otimes_{\downarrow A} h_2)}$ is an
  equivalence with inverse $g$ and let us prove that both
  $\adjoint{h_i}$ are equivalences. To lighten the notation, set
  $\adjoint{M_i} := \Ind{R}{A}(M_i)$. Notice that by definition of $g$,
  the morphism of $A$-modules given by the composite
  \[ A \xrightarrow{g} \adjoint{M_1} \otimes_A \adjoint{M_2}
    \xrightarrow{\id_{\adjoint{M_1}} \otimes_A \adjoint{h_2}} \adjoint{M_1}
    \otimes_A A\]
  is a section of $\adjoint{h_1}$, which shows that $\adjoint{M_1}$ splits
  as $A \oplus F_1$ where $F_1 := \fib(\adjoint{h_1})$. Under this
  splitting, $\adjoint{h_1}$ corresponds to the projection $A \oplus F_1
  \to A$. Of course there is an analogous splitting of $\adjoint{M_2}$
  as $A \oplus F_2$.

  Then we get that $\adjoint{h_1} \otimes_A \adjoint{h_2}$ is the
  projection of $(A \oplus F_1) \otimes_A (A \oplus F_2) = A \oplus
  F_1 \oplus F_2 \oplus F_1 \otimes F_2$ onto the first summand. Since
  this map is assumed to be an equivalence, we conclude $F_1 = F_2 = 0$
  and thus both $\adjoint{h_1}$ and $\adjoint{h_2}$ are equivalences.
\end{proof}

As a corollary of (one direction) of the lemma, the set of objects in
$B(R,A)$ is closed under tensor products in $\Pic(R)_{\downarrow A}$, and thus
$B(R,A)$ inherits an $\Es{n}$-monoidal structure.

We can now define orientations analogously to the definitions in
\cite{ABGHRinfcat} and \cite{ABGHRunits} for the $\Es{1}$ and
$\Es{\infty}$ cases.

\begin{defn}
  The \emph{space of $\Es{n}$ $A$-orientations} of an $\Es{n}$-map $f
  \colon X \to \Pic(R)$ is the space of
  $\Es{n}$-lifts of $f$ indicated below:
  \[\xymatrix{& B(R,A) \ar[d] \\
      X \ar[r] \ar@{-->}[ur] & \Pic(R) }\]
\end{defn}

Notice that because $B(R,A)$ is an $\Es{n}$-subspace of
$\Pic(R)_{\downarrow A}$, the universal property (\Cref{thomup}) implies that an
$\Es{n}$ $A$-orientation of $f$ determines a map of $\Es{n}$
$R$-algebras $Mf \to A$. Of course, not every such map of algebras is
an orientation; one way to state the requirement of factoring through
$B(R,A)$ is to say that for every point $x \colon \ast \to X$, the
adjoint $\theta_x^\dagger$ of the $R$-module map
$\theta_x \colon M(f \circ x) \to Mf \to A$ is an equivalence. But if
the $\Es{n}$-space $X$ is grouplike, then this condition is
automatically satisfied, as we show next.

\begin{lemma}\label{algmap-is-orient}
  If $f : X \to \Pic(R)$ is an $n$-fold loop map, then an $\Es{n}$
  $R$-algebra morphism $Mf \to A$ is automatically an orientation and
  the space of such algebra morphisms is equivalent to the space of
  $\Es{n}$ $A$-orientations of $f$.
\end{lemma}

\begin{proof}
  We just need to show that the lifts of $f\colon X \to \Pic(R)$ to
  $\Pic(R)_{\downarrow A}$ as considered in \Cref{thomup} factor through
  $B(R,A)$ (we defined $B(R,A)$ as the union of some of the connected
  components of $\Pic(R)_{\downarrow A}$). Such a lift gives for each point
  $x \in X$ an $R$-linear map $\alpha_x \colon f(x) \to A$.

  If $n>0$, we have at least one multiplication on $X$ and inverses
  for it, so the map $\alpha_{x^{-1}} \colon f(x^{-1}) \to A$
  satisfies that
  $\alpha_{x} \otimes \alpha_{x^{-1}} \simeq \alpha_{1_X} \colon
  f(1_X) \to A$ is homotopic to the unit $R \to A$ and thus is an
  equivalence when induced up to $A$. By \Cref{A-eqs-both-ways}, this
  implies $\alpha_x$ is in $B(R,A)$.

  If $n=0$, we don't even need \Cref{A-eqs-both-ways}: $X$ is
  connected by \Cref{0-fold-loop} so we can pick a path from
  $x$ to $1_X$ and the lift of $f$ gives a corresponding equivalence
  $f(x) \simeq f(1_X)$ that commutes up to homotopy with the maps
  $\alpha_{x}$ and $\alpha_{1_X}$ to $A$.
\end{proof}

As expected $\Es{n}$ $A$-orientations give rise to Thom isomorphisms
of $\Es{n}$ algebras.

\begin{prop}\label{thomiso}
  An $\Es{n}$ $A$-orientation of an $\Es{n}$-map
  $f \colon X \to \Pic(R)$ give rise to a \emph{Thom isomorphism}
  $\Ind{R}{A}(Mf) \cong \Ind{S}{A}(\Sigma^\infty_+ X)$ of $\Es{n}$
  $A$-algebras (where $S$ is the sphere spectrum). The equivalence of
  $R$-modules underlying this Thom isomorphism is a map
  $A \otimes_R Mf \to A \otimes \Sigma^\infty_+ X$.
\end{prop}

\begin{proof}
  We can also describe $B(R,A)$ as a pullback of $\Es{n}$-spaces:
  \[\xymatrix{
      B(R,A) \ar[r] \ar[d] & B(A,A) \ar[d] \\
      \Pic(R) \ar[r]_{\Ind{R}{A}} & \Pic(A) }\]

  Notice that from the definition, $B(A,A)$ is the space of
  trivialized $A$-modules denoted by $A\text{-triv}$ in
  \cite{ABGHRinfcat}. Since it is the (entire) slice of $\Pic(A)$ over
  $A$, it is contractible. Thus a lift of $f$ to $B(R,A)$ gives a
  null-homotopy of $\Ind{R}{A} \circ f$, and consequently,
  $\Ind{R}{A} \circ f$ has the same Thom spectrum as the null map
  $X \to \Pic(A)$.

  Since $\Ind{R}{A} \colon \Mod_R \to \Mod_A$ is a left adjoint, it
  preserves the colimit computing the Thom spectrum of $f$, that is,
  $M(\Ind{R}{A} \circ f) \cong \Ind{R}{A}(Mf)$. On the other hand the
  null map can be expressed as $\Ind{R}{S} \circ c$ where
  $c \colon X \to \Pic(S)$ is null. The same argument shows that
  $M(\Ind{S}{A} \circ c) \cong \Ind{S}{A}(\colim_X S) =
  \Ind{S}{A}(\Sigma^\infty_+ X)$.

  Finally, the underlying $R$-module spectrum of $\Ind{R}{A}(M)$ is
  given by $Mf \otimes_R A$.
\end{proof}

\begin{cor}\label{Mf-orientable}
  Let $n > 0$ and $f\colon X \to \Pic(R)$ be an $n$-fold loop map with Thom spectrum $Mf$. Then $f$ is canonically $\Es{n-1}$ $Mf$-orientable.
\end{cor}

\begin{proof}
  Recall that $Mf$ is an $\Es{n}$ $R$-algebra, which we can regard as an $\Es{n}$-ring spectrum under $R$ (see the second paragraph of \Cref{under-vs-Ralg}). The identity morphism $Mf \to Mf$ gives the required orientation.
\end{proof}

The corresponding Thom isomorphism is an equivalence of $\Es{n-1}$ $R$-algebras, $\Ind{R}{Mf}(Mf) \cong \Ind{S}{Mf}(\Sigma^\infty_+ X)$ whose underlying $R$-module equivalence is a morphism $Mf \otimes_R Mf \cong Mf \otimes \Sigma^\infty_+ X$. This Thom isomorphism is due to Mahowald, see \cite[Theorem 1.2]{MahowaldRS}, see also  \cite[Cor.~1.8]{Umkehr}

\smallskip

Now we give an alternative proof and mild generalization of the description of $\Es{n}$-orientations of ring spectra due to Chadwick and Mandell, see~\cite[Theorem 3.2]{engenera}. While their argument uses the general Thom isomorphism, we deduce this result directly from the universal property of Thom ring spectra.

\begin{cor}
  Let $f\colon X \to \Pic(R)$ be an $n$-fold loop map and let $A$ be an $\Es{n+1}$-ring spectrum under $R$. The space of $\Es{n}$ $R$-algebra maps from $Mf$ to $A$ (regarded as an $\Es{n}$ $R$-algebra) is either empty or equivalent to the space of $\Es{n}$ $R$-algebra maps from $R \otimes \Sigma^{\infty}_+X$ to $A$, that is
  \[\Map_{\Alg_R^{\Es{n}}}(Mf,A) \simeq \Map_{\Alg_R^{\Es{n}}}(R
    \otimes \Sigma^{\infty}_+X,A).\]
\end{cor}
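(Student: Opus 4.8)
The plan is to apply \myref{thomup} to rewrite the mapping space out of $Mf$ as a space of $\Es{n}$-lifts, and then to identify that lift space with a mapping space out of the free $\Es{n}$ $R$-algebra $R \otimes \Sigma^\infty_+ X$. First I would invoke \myref{thomup}: the space $\Map_{\Alg_R^{\Es{n}}}(Mf, A)$ is equivalent to the space of $\Es{n}$-lifts of $f \colon X \to BGL_1R$ through $BGL_1R_{/A} \to BGL_1R$ (using the $BGL_1$ version of the theorem, valid since $f$ factors through $BGL_1R$). Since $A$ is an $\Es{n+1}$-ring spectrum under $R$, \myref{lifts=null} identifies $BGL_1R_{/A}$ with the fiber of the $\Es{n}$-monoidal functor $BGL_1R \to BGL_1A$ over the basepoint; hence the lift space is the space of null-homotopies, as $\Es{n}$-maps, of the composite $c \colon X \to BGL_1R \to BGL_1A$. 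This is the ``either empty or $\ldots$'' dichotomy: if there is no $\Es{n}$-null-homotopy at all, the mapping space is empty; otherwise, choosing one exhibits the space of null-homotopies as a torsor-like object.

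Next I would make the torsor statement precise. The space of $\Es{n}$-null-homotopies of $c$, when nonempty, is equivalent to $\Omega$ of the mapping space $\Map_{\Es{n}}(X, BGL_1A)$ based at $c$ --- but more usefully, it is equivalent to $\Map_{\Es{n}}(X, \Omega BGL_1A)$ up to the choice of basepoint, and since $BGL_1A$ deloops $GL_1A$, this is $\Map_{\Es{n}}(X, GL_1A)$. Here I am using that $BGL_1R_{/A}$ sits over $BGL_1R$ and that, after picking a lift, translation by the given null-homotopy trivializes the fibration, identifying all fibers; the fiber of $BGL_1R \to BGL_1A$ is computed using that $BGL_1A \simeq B GL_1 A$ as $\Es{n}$-spaces, so looping once gives $GL_1A = \Omega BGL_1A$. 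Thus, when nonempty,
\[ \Map_{\Alg_R^{\Es{n}}}(Mf, A) \simeq \Map_{\Es{n}}(X, GL_1A). \]

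Finally I would identify the right-hand side with the desired mapping space. The $\Es{n}$ $R$-algebra $R \otimes \Sigma^\infty_+ X$ is the free $\Es{n}$ $R$-algebra on the $\Es{n}$-space $X$ (this is the standard ``suspension spectrum of a grouplike $\Es{n}$-space, base-changed to $R$'' presentation of the free functor $\Es{n}\text{-}\mathrm{Spaces} \to \Alg_R^{\Es{n}}$, valid here because $f$ being an $n$-fold loop map forces $X$ to be grouplike). By the corresponding free-forgetful adjunction, $\Map_{\Alg_R^{\Es{n}}}(R \otimes \Sigma^\infty_+ X, A) \simeq \Map_{\Es{n}}(X, GL_1A)$, where $GL_1A$ is the underlying $\Es{n}$-space of $A$ regarded as a grouplike $\Es{n}$-space under multiplication (the units, since $X$ is grouplike a map to the multiplicative $\Es{n}$-space of $A$ automatically lands in the units). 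Composing the two equivalences gives the claim, with the empty case handled as above. The main obstacle is pinning down the second step cleanly: one must check that after choosing an $\Es{n}$-null-homotopy the space of all of them is genuinely $\Map_{\Es{n}}(X, GL_1A)$ and not merely abstractly equivalent to it --- i.e., that the relevant fibration of $\Es{n}$-spaces is principal --- and that the free $\Es{n}$ $R$-algebra on $X$ is modeled by $R \otimes \Sigma^\infty_+ X$ with the $\Es{n}$-structure coming from that on $X$; both are standard but deserve a careful citation to \cite{HA}.
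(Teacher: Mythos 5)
Your proposal is correct, and its first half coincides with the paper's: both reduce $\Map_{\Alg_R^{\Es{n}}}(Mf,A)$, via \myref{thomup} and \myref{lifts=null}, to the space of $\Es{n}$-null-homotopies of the composite $X \to BGL_1R \xrightarrow{\Ind{R}{A}} BGL_1A$, i.e.\ to the pullback of $c_A$ and $\Ind{R}{A}\circ f$ over $\Map_{\Es{n}}(X,BGL_1A)$. The two arguments then diverge. The paper's finishing move is to observe that $R\otimes\Sigma^{\infty}_+X$ is itself the Thom spectrum of the \emph{constant} map $c_R\colon X\to BGL_1R$, so the identical reasoning exhibits $\Map_{\Alg_R^{\Es{n}}}(R\otimes\Sigma^{\infty}_+X,A)$ as the pullback of $c_A$ and $\Ind{R}{A}\circ c_R=c_A$ over the same mapping space; the comparison is then immediate, since path spaces between points in the same component agree. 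You instead compute each side separately: the lift space, once nonempty, is identified by a translation (concatenation-of-paths) argument with $\Map_{\Es{n}}(X,\Omega BGL_1A)\simeq\Map_{\Es{n}}(X,GL_1A)$, and the other side is identified with the same thing via the free--forgetful adjunction for $X\mapsto R\otimes\Sigma^{\infty}_+X$ together with grouplikeness of $X$. Both routes are sound; the paper's is more economical and symmetric (no torsor step, no separate adjunction input), while yours has the merit of producing the explicit common value $\Map_{\Es{n}}(X,GL_1A)\simeq\Map_{\Es{n}}(X,\Omega^{\infty}A)$, which is closer to how Chadwick--Mandell phrase the result. The two points you flag as needing care (principality of the translation, and the identification of the free functor) are indeed the only places requiring citation, and both are standard.
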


\begin{proof}
  In view of the pullback square of $\Es{n}$-spaces
  \[\xymatrix{
      \Pic(R)_{\downarrow A} \ar[r] \ar[d] & \Pic(A)_{\downarrow A} \ar[d] \\
      \Pic(R) \ar[r]_{\Ind{R}{A}} & \Pic(A), }\]
  the space of $\Es{n}$-lifts of $f\colon X \to \Pic(R)$ to $\Pic(R)_{\downarrow A}$ is equivalent to the space of $\Es{n}$-lifts of $\Ind{R}{A} \circ f$ to $\Pic(A)_{\downarrow A}$. Assume $\Map_{\Alg_R^{\Es{n}}}(Mf,A)$ is non-empty. Any $\Es{n}$ $R$-algebra map in that space is automatically an orientation by \Cref{algmap-is-orient} and thus gives a homotopy between $\Ind{R}{A} \circ f$ and the constant map $c_A \colon X \to \Pic(A)$ through $\Es{n}$-maps. That homotopy guarantees the spaces of $\Es{n}$-lifts of $\Ind{R}{A} \circ f$ and $c_A$ are equivalent, and the universal property (\Cref{thomup}) says these spaces of lifts are the spaces of algebra maps in the statement.
\end{proof}

A similar result is true for spaces of orientations with a very similar proof.

\begin{cor}
  Let $f\colon X \to \Pic(R)$ be an $\Es{n}$-map and let $A$ be an $\Es{n+1}$-ring spectrum under $R$. The space of $\Es{n}$ $A$-orientations for $f$ is either empty or equivalent to the space of $\Es{n}$ $A$-orientations of the constant map $c_R \colon X \to \Pic(R)$, namely $\Omega \Map_{\Es{n}}(X, \Pic(A))$.
\end{cor}

\begin{proof}
  Arguing just as for the previous corollary using $B(-,A)$ in place of $\Pic(-)_{\downarrow A}$ yields everything but the concrete description of the space of orientations for the constant map. To see that, notice that unlike $\Pic(A)_{\downarrow A}$, the space $B(A,A) \simeq \Pic(A)_{/A}$ is contractible. This says first that $B(R,A)$ is the fiber of $\Ind{R}{A}\colon \Pic(R) \to \Pic(A)$ and second that the space of $\Es{n}$-lifts of the null map $c_R$ to $B(R,A)$ is equivalent to $\Map_{\Es{n}}(X, \Omega\Pic(A)) \simeq \Omega \Map_{\Es{n}}(X, \Pic(A))$.
\end{proof}

\section{Thom spectra and versal $\Es{n}$-algebras}\label{sec:char}

\subsection{Characteristics of ring spectra}

Let $0 \le n \le \infty$ be an integer or $\infty$ and consider an $\Es{n+1}$-ring
spectrum $R$ with associated category $\Mod_R$ of left $R$-module. As explained above, this
category admits a monoidal product $\otimes = \otimes_R$ equipping it
with the structure of an $\Es{n}$-monoidal category, so that there
is a category $\Alg_R^{\Es{n}}$ of $\Es{n}$-algebras over $R$.

We now extend the definition of characteristic introduced in \cite{szymikcharacteristics1,szymikcharacteristics2} in two ways: Firstly, we allow arbitrary homotopy classes $S^k \to R$ in non-negative degrees and secondly we consider $\Es{n}$-algebras. For background and further results in the $\Es{\infty}$ case we refer to Szymik's papers.

\begin{defn}\label{defnchar}
  Given $k\ge 0$ and $\chi \in \pi_kR$ thought of as an $R$-linear map
  $\Sigma^k R \to R$, an algebra $A \in \Alg_R^{\Es{n}}$ with unit
  $\eta\colon R \to A$ is said to be of \emph{characteristic} $\chi$
  if $\eta \circ \chi \colon \Sigma^k R \to A$ is null-homotopic.
\end{defn}

\begin{rmk}
If $n\ge 1$ and $A \in \Alg_R^{\Es{n}}$ is of characteristic $p$, then the  existence of the multiplication map $\mu\colon A \otimes_R A \to A$ shows $p \cdot \id_A =0$. However, note that for $n=0$ an algebra being of characteristic $p$ does not imply $p \cdot \id_A$ is null, as the mod 2 Moore spectrum demonstrates. 
\end{rmk}

\begin{defn}
  For a given $\chi\colon \Sigma^k R \to R$, we define the \emph{versal
  $R$-algebra $\modmod[\Es{n}]{R}{\chi}$ of characteristic} $\chi$ as
  the following pushout in $\Alg_R^{\Es{n}}$:
  \[\xymatrix{\free{n} \Sigma^k R \ar[r]^-{\bar{0}} \ar[d]_{\bar{\chi}} & R \ar[d] \\
    R \ar[r] & \modmod[\Es{n}]{R}{\chi},}\]
  where
  \begin{itemize}
  \item $F_{\Es{n}} \colon \Mod_R \to \Alg_R^{\Es{n}}$ is the
    free $\Es{n}$ $R$-algebra functor, the left adjoint
    to the forgetful functor $\Alg_R^{\Es{n}} \to \Mod_R$.
  \item $\bar{g}\colon \free{n}M \to N$ denotes the algebra map
    which is adjoint to the $R$-linear map $M \to N$ with
    $N \in \Alg_R^{\Es{n}}$.
  \end{itemize}
We use the term \emph{versal} rather than \emph{universal} as these algebras are not initial but only weakly initial in general, see \cite[Prop. 3.11]{szymikcharacteristics1}. When there is no risk of confusion, we shall write
  $\modmod{R}{\chi}$ instead of $\modmod[\Es{n}]{R}{\chi}$.
\end{defn}

The versal $\Es{n}$ $R$-algebra $\modmod{R}{\chi}$ admits a
different characterization as the free $\Es{n}$ $R$-algebra on the
\emph{pointed spectrum}
$R\to R/\chi = \cof(\Sigma^k R \xrightarrow{\chi} R)$, that is,
$\modmod[\Es{n}]{R}{\chi} =
\ind{0}{n}\left(\modmod[\Es{0}]{R}{\chi}\right)$. More generally we have:

\begin{lemma}
Let $0\le m\le n$, then the following $\Es{n}$ $R$-algebras are equivalent:
\begin{enumerate}
 \item The versal characteristic $\chi$ algebra $\modmod[\Es{n}]{R}{\chi}$,
 \item the free $\Es{n}$ $R$-algebra $\ind{m}{n}(\modmod[\Es{m}]{R}{\chi})$ on the $\Es{m}$ R-algebra $\modmod[\Es{m}]{R}{\chi}$, 
 \item the coequalizer $C_R(\chi)=\coeq\left(\free{n}\Sigma^kR \overset{\bar{\chi}}{\underset{\bar{0}}{\rightrightarrows}} R\right)$ taken in $\Alg_R^{\Es{n}}$.
\end{enumerate}
\end{lemma}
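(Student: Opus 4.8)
The plan is to prove the equivalence of the three $\Es{n}$ $R$-algebras by a sequence of adjunction and cofinality arguments, none of which should require any homotopy-theoretic input beyond the formal properties of free algebras and operadic Kan extensions already recalled in the excerpt. First I would observe that (3) is just a reformulation of (1): a coequalizer $\coeq(\free{n}\Sigma^kR \rightrightarrows R)$ in $\Alg_R^{\Es{n}}$ is computed as the pushout of $R \leftarrow \free{n}\Sigma^kR \amalg \free{n}\Sigma^kR \to R$ along the two maps, but since $\free{n}$ is a left adjoint it preserves coproducts, so $\free{n}\Sigma^kR \amalg \free{n}\Sigma^kR = \free{n}(\Sigma^kR \oplus \Sigma^kR)$; folding the two parallel arrows $\bar\chi, \bar 0$ into the single map $\free{n}(\Sigma^kR\oplus\Sigma^kR)\to R$ classified by $(\chi, 0)$ and using that the defining pushout square for $\modmod[\Es{n}]{R}{\chi}$ is the pushout of $R\xleftarrow{\bar 0}\free{n}\Sigma^kR\xrightarrow{\bar\chi}R$, a short diagram chase identifies the two colimits. (Alternatively, one simply notes both (1) and (3) are presented by the same reflexive coequalizer/pushout of free algebras and the colimit defining a coequalizer of a parallel pair agrees with the pushout against the codiagonal.)

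Next I would prove the equivalence of (1) and (2), which is the conceptual heart of the statement. The key input is that the free functor $\ind{m}{n}\colon \Alg_R^{\Es{m}}\to\Alg_R^{\Es{n}}$ is a left adjoint, hence preserves all colimits, in particular pushouts, and moreover that $\ind{m}{n}\circ F_{\Es{m}} \simeq F_{\Es{n}}$ as functors $\Mod_R\to\Alg_R^{\Es{n}}$ (both are left adjoint to the forgetful functor $\Alg_R^{\Es{n}}\to\Mod_R$, factored through $\Alg_R^{\Es{m}}$), and that $\ind{m}{n}$ carries the unit $R\in\Alg_R^{\Es{m}}$ to the unit $R\in\Alg_R^{\Es{n}}$. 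Applying $\ind{m}{n}$ to the pushout square defining $\modmod[\Es{m}]{R}{\chi}$ and using these three facts turns that square into exactly the pushout square defining $\modmod[\Es{n}]{R}{\chi}$; hence $\ind{m}{n}(\modmod[\Es{m}]{R}{\chi})\simeq\modmod[\Es{n}]{R}{\chi}$. Taking $m=0$ recovers the stated description of $\modmod[\Es{n}]{R}{\chi}$ as the free $\Es{n}$ $R$-algebra on $R/\chi = \cof(\Sigma^kR\xrightarrow{\chi}R)$, once one recalls that $\modmod[\Es{0}]{R}{\chi}$ is by definition the free $\Es{0}$ $R$-algebra on the pointed spectrum $R\to R/\chi$, i.e. that an $\Es{0}$ $R$-algebra is just a pointed $R$-module and the pushout $\free{0}\Sigma^kR\to R\leftarrow\free{0}\Sigma^kR$ computes, after forgetting down to $\Mod_R$, the cofiber of $\chi$ with its canonical basepoint.

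The one point that needs genuine care — and which I expect to be the main obstacle — is the identification $\ind{m}{n}\circ F_{\Es{m}}\simeq F_{\Es{n}}$ together with the compatibility of the unit maps $\bar\chi$, $\bar 0$ under $\ind{m}{n}$; one must check that $\ind{m}{n}$ sends the algebra map $\bar\chi\colon \free{m}\Sigma^kR\to R$ adjoint to $\chi\colon\Sigma^kR\to R$ to the algebra map $\bar\chi\colon\free{n}\Sigma^kR\to R$ adjoint to the same $\chi$, and likewise for $\bar 0$. This is a naturality statement about the unit of the composite adjunction: the composite $\Mod_R\xrightarrow{F_{\Es{m}}}\Alg_R^{\Es{m}}\xrightarrow{\ind{m}{n}}\Alg_R^{\Es{n}}$ is left adjoint to the composite forgetful functor, hence canonically equivalent to $F_{\Es{n}}$, and under this equivalence the counit-composition identity shows the map adjoint to $\chi$ on the left is carried to the map adjoint to $\chi$ on the right. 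Once this compatibility is in hand the rest is a purely formal manipulation of pushout squares, so I would state the adjunction facts cleanly, cite \cite{HA} for the existence of the relevant free functors, and let the diagram chase be brief.
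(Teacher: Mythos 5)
Your argument for the equivalence of (1) and (2) is correct and is essentially the paper's own proof: apply the left adjoint $\ind{m}{n}$ to the defining pushout square, using that it preserves pushouts, carries the initial algebra $R$ to $R$, and satisfies $\ind{m}{n}\circ\free{m}\simeq\free{n}$; the compatibility of $\bar{\chi}$ and $\bar{0}$ under this identification is, as you say, just the naturality of the composite adjunction.

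The equivalence of (1) and (3) is where your write-up has a gap. The coequalizer of a parallel pair $f,g\colon X\rightrightarrows Y$ is \emph{not} in general the pushout of the two maps $Y\amalg_X Y$; the correct general reduction is the pushout against the codiagonal, $Y\amalg_{X\sqcup X}X$, and this is a genuinely different diagram from the defining pushout of $\modmod[\Es{n}]{R}{\chi}$ (it has $\free{n}\Sigma^kR$, not $R$, in one corner). So your ``short diagram chase'' must use something special to this situation, namely that $R$ is the \emph{initial} object of $\Alg_R^{\Es{n}}$, so that $\Map_{\Alg_R^{\Es{n}}}(R,A)\simeq\ast$. Only with that input do both colimits corepresent the same functor, sending an algebra $A$ with unit $\eta$ to the space of paths from $\eta\circ\bar{\chi}$ to $\eta\circ\bar{0}$ in $\Map(\free{n}\Sigma^kR,A)\simeq\Omega^{\infty+k}A$, i.e.\ the space of null-homotopies of $\eta\circ\chi$. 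This corepresentability argument is exactly how the paper proves the equivalence of (1) and (3), and it is the ingredient your proposal omits; without invoking the initiality of $R$ (or an equivalent mapping-space computation), the identification of the two colimit diagrams does not go through.
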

\begin{proof}
We first prove the equivalence of the algebras in (1) and (2). On the one hand, applying the left adjoint $\ind{m}{n}$ to the defining pushout diagram for $\modmod[\Es{m}]{R}{\chi}$ gives a pushout square
\[\xymatrix{\ind{m}{n} \free{m} \Sigma^k R \ar[r]^-{\bar{0}} \ar[d]_{\ind{m}{n} \bar{\chi}} & \ind{m}{n} R \ar[d] & \\
\ind{m}{n} R \ar[r] & \ind{m}{n}(\modmod[\Es{m}]{R}{\chi}).}\]
On the other hand, since $\ind{m}{n}$ preserves the initial $R$-algebra $R$ and $\ind{m}{n} \free{m} \simeq \free{n}$, this diagram is naturally equivalent to the pushout square
\[\xymatrix{\free{n} \Sigma^k R \ar[r]^-{\bar{0}} \ar[d]_{\bar{\chi}} & R \ar[d] \\
    R \ar[r] & \modmod[\Es{n}]{R}{\chi},}\]
hence it follows immediately that $\modmod[\Es{n}]{R}{\chi}$ is equivalent to $\ind{m}{n}(\modmod[\Es{m}]{R}{\chi})$.

To prove that $\modmod[\Es{n}]{R}{\chi}$ is equivalent to the coequalizer $C_R(\chi)$, we observe that both algebras corepresent the same functor on $\Alg_R^{\Es{n}}$, which maps a given algebra $A$ with unit map $\eta$ to the space of null-homotopies of the composite $\Sigma^k R \xrightarrow{\chi} R \xrightarrow{\eta} A$. 
\end{proof}

There is a Thom-isomorphism type lemma for algebras of a given characteristic, which is a straightforward generalization of~\cite[Prop. 3.2]{szymikcharacteristics1}.

\begin{lemma}\label{charthomeq}
  Let $n\ge 1$ and suppose $\eta \colon R \to A$ is a map of
  $\Es{n+1}$-ring spectra where $A$ has characteristic $\chi$, then
  there is a natural equivalence of $\Es{n}$ $A$-algebras
  \[\Ind{R}{A}(\free{n}(\Sigma^{k+1} R)) \xrightarrow{\;\sim\;}
  \Ind{R}{A}(\modmod[\Es{n}]{R}{\chi}).\]
\end{lemma}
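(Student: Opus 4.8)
The plan is to reduce the claimed equivalence to the universal property of Thom spectra, exactly as Szymik does in the $\Es{\infty}$ case, by exhibiting $\modmod[\Es{n}]{R}{\chi}$ as a Thom spectrum over $A$ and identifying the relevant base space. First I would recall that by the previous lemma we may write $\modmod[\Es{n}]{R}{\chi} = \ind{0}{n}(\modmod[\Es{0}]{R}{\chi}) = \ind{0}{n}(R/\chi)$, where $R/\chi = \cof(\Sigma^k R \xrightarrow{\chi} R)$ is regarded as a pointed $R$-module via the unit $R \to R/\chi$. Applying the $\Es{n}$-monoidal functor $\Ind{R}{A}$ and using that it commutes with $\ind{0}{n}$ and with cofibers, we get $\Ind{R}{A}(\modmod[\Es{n}]{R}{\chi}) = \ind{0}{n}(A/\eta(\chi))$ as an $\Es{n}$ $A$-algebra, where $A/\eta(\chi) = \cof(\Sigma^k A \xrightarrow{\eta(\chi)} A)$. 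Since $A$ has characteristic $\chi$, the map $\eta(\chi)\colon \Sigma^k A \to A$ is null-homotopic, so the cofiber splits: $A/\eta(\chi) \simeq A \oplus \Sigma^{k+1} A$, pointed by the inclusion of the first summand. Similarly, $\Ind{R}{A}(\free{n}(\Sigma^{k+1}R)) = \free{n}(\Sigma^{k+1}A)$, which is $\ind{0}{n}$ applied to the pointed $A$-module $A \oplus \Sigma^{k+1} A$ as well (the free pointed $A$-module on $\Sigma^{k+1}A$). Thus both sides are $\ind{0}{n}$ of the same pointed $A$-module $A \oplus \Sigma^{k+1}A$, and the equivalence follows.

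The main subtlety — and the step I expect to be the real obstacle — is making precise the sense in which $\modmod[\Es{0}]{R}{\chi} = R/\chi$ is the "free $\Es{0}$ $R$-algebra on a pointed $R$-module" and checking that $\ind{0}{n}$ of the pointed object $A \oplus \Sigma^{k+1} A$ agrees with the free $\Es{n}$ $A$-algebra $\free{n}(\Sigma^{k+1}A)$. The point is that an $\Es{0}$-algebra in $\Mod_A$ is just a pointed object, i.e., an $A$-module $N$ together with a map $A \to N$; the free such object on an $A$-module $P$ is $A \oplus P$, and $\free{n}(P) = \ind{0}{n}(A \oplus P)$ because the composite of left adjoints (free $\Es{0}$-algebra, then free $\Es{n}$-algebra on an $\Es{0}$-algebra) is the free $\Es{n}$-algebra, which factors as $\Mod_A \to \Alg_A^{\Es{0}} \to \Alg_A^{\Es{n}}$. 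Everything else is formal: the splitting $A/\eta(\chi) \simeq A \oplus \Sigma^{k+1}A$ respects the pointing because the null-homotopy of $\eta(\chi)$ gives a section of $A \to A/\eta(\chi)$, and the identification $\Ind{R}{A}(\cof(\Sigma^k R \xrightarrow{\chi} R)) \simeq \cof(\Sigma^k A \xrightarrow{\eta(\chi)} A)$ is just that $\Ind{R}{A}$ preserves cofiber sequences and sends $\chi$ to $\eta(\chi)$ by definition of the unit.

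Alternatively — and this may be cleaner to write — one can argue purely by corepresented functors, avoiding any explicit splitting. For an $\Es{n}$ $A$-algebra $B$, maps $\Ind{R}{A}(\modmod[\Es{n}]{R}{\chi}) \to B$ correspond (by adjunction $\Ind{R}{A} \dashv$ restriction, then by the coequalizer description of $\modmod[\Es{n}]{R}{\chi}$) to null-homotopies of $\Sigma^k R \xrightarrow{\chi} R \to A \to B$; since $A$ has characteristic $\chi$, this composite is already canonically null via $\eta(\chi) \simeq 0$, so the space of such null-homotopies is a torsor, identified with $\Map_{\Mod_A}(\Sigma^{k+1} A, B) = \Map_{\Alg_A^{\Es{n}}}(\free{n}(\Sigma^{k+1}A), B)$. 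Naturality in $B$ of this chain of identifications, which comes down to compatibility of the coequalizer presentation with the chosen null-homotopy, gives the asserted natural equivalence; this is exactly the content of the cited \cite[Prop. 3.2]{szymikcharacteristics1} transported to the $\Es{n}$ setting, and the only thing one genuinely needs beyond the earlier results in this paper is that the null-homotopy of $\eta(\chi)$ witnessing characteristic $\chi$ can be chosen once and for all to make the identification natural.
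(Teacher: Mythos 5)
Your argument is correct, and both of your routes turn on exactly the same key point as the paper's proof: the hypothesis that $A$ has characteristic $\chi$ supplies a null-homotopy of $\eta\circ\chi$, which after base change along $\Ind{R}{A}$ lets you replace the attaching map $\bar{\chi}$ by $\bar{0}$. The packaging differs, though. The paper works one categorical level up: it writes \emph{both} algebras as pushouts of spans $R \leftarrow \free{n}\Sigma^k R \rightarrow R$ in $\Alg_R^{\Es{n}}$ (with attaching maps $(\bar{\chi},\bar{0})$ and $(\bar{0},\bar{0})$ respectively), applies the left adjoint $\Ind{R}{A}$ to both squares, and observes that the two spans become equivalent over $A$; no mention of $\ind{0}{n}$, cofiber splittings, or corepresented functors is needed. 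Your first route descends to pointed $A$-modules via $\ind{0}{n}$ and splits the cofiber $A/\eta(\chi)\simeq A\oplus\Sigma^{k+1}A$, which costs you the extra (true, but not free) identifications $\modmod[\Es{n}]{R}{\chi}\simeq\ind{0}{n}(R/\chi)$ and $\free{n}(P)\simeq\ind{0}{n}(A\oplus P)$; your second route is the Yoneda form of the same computation and is essentially how the paper later proves \myref{charpmappinguniversal}. What the paper's version buys is brevity and the observation that one never needs to leave $\Alg_R^{\Es{n}}$; what yours buys is a concrete description of the underlying $A$-module and an explicit accounting of where the choice of null-homotopy enters. Your closing caveat about naturality is well taken: the equivalence does depend on a chosen null-homotopy witnessing the characteristic, and the paper's proof makes the same implicit choice.
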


\begin{proof}
  The two algebras in question are constructed as pushouts in
  $\Alg_{R}^{\Es{n}}$
\[\xymatrix{\free{n}\Sigma^kR \ar[r]^{\bar{0}} \ar[d]_{\bar{0}} & R \ar[d] & \free{n}\Sigma^k R \ar[r]^{\bar{0}} \ar[d]_{\bar{\chi}} & R \ar[d]\\ 
R \ar[r] & \free{n}(\Sigma^{k+1} R) & R \ar[r] & \modmod{R}{\chi}.}\]
Since
$\free{n}(\Sigma^kR) \otimes_R A \xrightarrow{\chi \otimes \id_A} A$
is homotopic to $0$ if $A$ has characteristic $\chi$, the two diagrams
become equivalent after applying the functor
$\Ind{R}{A} \colon \Alg^{\Es{n}}_R \to \Alg^{\Es{n}}_A$ (see
\Cref{if-A-is-En+1}), which is given on the level of $R$-modules by
smashing with $A$.
\end{proof}

\begin{lemma}\label{charpmappinguniversal}
  Suppose $A \in \Alg_R^{\Es{n}}$. If $A$ is of characteristic
  $\chi\colon \Sigma^kR \to R$, then
  \[\Map_{\Alg_R^{\Es{n}}}(\modmod[\Es{n}]{R}{\chi},A) =
  \Omega^{\infty + k+1}A;\]
  otherwise the mapping space is empty.
\end{lemma}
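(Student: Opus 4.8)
The plan is to compute the mapping space $\Map_{\Alg_R^{\Es{n}}}(\modmod[\Es{n}]{R}{\chi},A)$ directly from the defining pushout square for $\modmod[\Es{n}]{R}{\chi}$. Since $\Map_{\Alg_R^{\Es{n}}}(-,A)$ sends colimits in the first variable to limits, applying it to the pushout
\[\xymatrix{\free{n} \Sigma^k R \ar[r]^-{\bar{0}} \ar[d]_{\bar{\chi}} & R \ar[d] \\
    R \ar[r] & \modmod[\Es{n}]{R}{\chi}}\]
yields a pullback square of spaces whose corners are $\Map_{\Alg_R^{\Es{n}}}(\modmod[\Es{n}]{R}{\chi},A)$, two copies of $\Map_{\Alg_R^{\Es{n}}}(R,A) \simeq \ast$ (since $R$ is initial), and $\Map_{\Alg_R^{\Es{n}}}(\free{n}\Sigma^k R, A)$. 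By the free–forgetful adjunction the latter is $\Map_{\Mod_R}(\Sigma^k R, A) \simeq \Omega^{\infty+k}A$, the $k$-th loop space of the underlying spectrum of $A$. So the mapping space in question is the fiber of a map $\ast \to \Omega^{\infty+k}A$ over the image of the other point $\ast \to \Omega^{\infty+k}A$; equivalently, it is the space of paths in $\Omega^{\infty+k}A$ from the point picked out by $\bar 0$ to the point picked out by $\bar\chi$.

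The next step is to identify those two points. Under the adjunction, the algebra map $\bar 0 \colon \free{n}\Sigma^k R \to A$ corresponds to the zero map $0 \colon \Sigma^k R \to A$, hence to the basepoint of $\Omega^{\infty+k}A$. The algebra map $\bar\chi$ followed by the unit $\eta\colon R \to A$—more precisely, the composite $\free{n}\Sigma^k R \xrightarrow{\bar\chi} R \xrightarrow{\eta} A$—corresponds to $\eta \circ \chi \colon \Sigma^k R \to A$, which is exactly the element of $\pi_k A$ (equivalently, the point of $\Omega^{\infty+k}A$) whose nullity is the definition of $A$ having characteristic $\chi$. Therefore the pullback is the space of paths in $\Omega^{\infty+k}A$ between the basepoint and $\eta\circ\chi$. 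This is empty precisely when $\eta\circ\chi$ lies in a different path component, i.e. when $A$ is \emph{not} of characteristic $\chi$, which disposes of the second clause.

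When $A$ \emph{is} of characteristic $\chi$, the two points lie in the same component, and the path space between any two points in the same component of a space $Y$ is equivalent to the based loop space $\Omega Y$ (at the basepoint). Here $Y = \Omega^{\infty+k}A$, so the path space is $\Omega(\Omega^{\infty+k}A) = \Omega^{\infty+k+1}A$, as claimed. The one point deserving care is that the pullback square coming from the pushout is a square of spaces (not of algebras), so I should note explicitly that the forgetful functor $\Alg_R^{\Es{n}} \to \Mod_R \to \Sp$ and the functor $\Map_{\Alg_R^{\Es{n}}}(-,A)$ interact as stated—i.e. that $\Map_{\Alg_R^{\Es{n}}}(-,A)$ takes the pushout to a pullback, which is the universal property of pushouts in an $\infty$-category, and that $\Map_{\Alg_R^{\Es{n}}}(\free{n} M, A) \simeq \Omega^{\infty}\Map_{\Mod_R}(M, A)$ naturally, which is the free–forgetful adjunction. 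Neither is a genuine obstacle; the only mild subtlety is bookkeeping the identification of the two points in $\Omega^{\infty+k}A$ so that the word "characteristic" matches \myref{defnchar} on the nose. Alternatively, one can shortcut the whole argument by invoking the last Lemma's characterization of $\modmod[\Es{n}]{R}{\chi}$ as corepresenting the space of null-homotopies of $\Sigma^k R \xrightarrow{\chi} R \xrightarrow{\eta} A$, and then noting that this space of null-homotopies is empty unless $\eta\circ\chi \simeq 0$ and is $\Omega^{\infty+k+1}A$ when it is—but writing it out via the pushout makes the loop-space count transparent.
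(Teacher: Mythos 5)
Your proposal is correct and follows essentially the same route as the paper: both apply $\Map_{\Alg_R^{\Es{n}}}(-,A)$ to the defining pushout to obtain a pullback square with two contractible corners over $\Map_{\Alg_R^{\Es{n}}}(\free{n}\Sigma^k R,A)\simeq\Omega^{\infty+k}A$, identify the two points as $0$ and $\eta\circ\chi$, and read off the path space as $\Omega^{\infty+k+1}A$ when they lie in the same component and as empty otherwise. The paper phrases this as a direct generalization of Szymik's argument, but the content is identical to yours.
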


\begin{proof}
  The proof of \cite[Cor. 2.9]{szymikcharacteristics1} generalizes
  effortlessly: By definition of $\modmod{R}{\chi}$ as an
  $\Es{n}$-algebra, there exists a pullback square
  \[\xymatrix{\Map_{\Alg_R^{\Es{n}}}(\modmod{R}{\chi},A) \ar[r] \ar[d] &
  \Map_{\Alg_R^{\Es{n}}}(R,A)= \ast \ar[d]^{\bar{0}^*} \\
  \ast = \Map_{\Alg_R^{\Es{n}}}(R,A) \ar[r]_-{\bar{\chi}^*} &
  \Map_{\Alg_R^{\Es{n}}}(\free{n}\Sigma^{k}R,A) =
  \Omega^{\infty+k}A}\]
showing that
$\Map_{\Alg_R^{\Es{n}}}(\modmod{R}{\chi},A) \xrightarrow{\sim}
\Omega^{\infty +k+1}A$
if $A$ is of characteristic $\chi$ so that the two points lie in the
same component of $\Omega^{\infty}A$, and empty otherwise.
\end{proof}

In other words, specializing to the case in which $\chi$ is the
multiplication by $p$ map we see that
$\Map_{\Alg_R^{\Es{n}}}(\modmod{R}{p},A)$ is the space of null-homotopies of the composite map
$R \xrightarrow{p} R \xrightarrow{\eta} A$ in $R$-modules, which is
equivalent to the space of homotopies $\eta \sim (1-p)\eta$ under the
map $\Hom_R(R,A) \to \Hom_R(R,A)$ given by
$\alpha \mapsto \alpha + \eta$. This in turn admits an interpretation
in terms of Thom spectra, given in the next section.

\subsection{Thom spectra as versal characteristic $\chi$ algebras}

The goal of this section is to identify the Thom spectrum classified by a map $f\colon S^{k+1} \to BGL_1R$ with the versal characteristic $\chi$ algebra, where $\chi=\chi(f)\colon \Sigma^kR \to R$ is the characteristic corresponding to $f$ as defined below.

\begin{lemma}\label{pblemma}
  If $k\ge 0$ and $A$ is any unital $R$-module, that is, an $R$-module
  equipped with an $R$-linear map $\eta \colon R \to A$, then there is a
  natural pullback square of spaces
  \[\xymatrix{\Map_*(S^{k+1}, BGL_1R_{\downarrow A}) \ar[r] \ar[d] & \ast \ar[d]^{\underline{\eta}} \\
    \Map_*(S^{k+1},BGL_1R) \ar[r] & \Map_*(S^{k},\Omega^{\infty}A) =
    \Omega^{\infty+k}A}\]
  where the right vertical map picks out the constant map with value the unit 
  $\eta \in \Hom_R(R,A) = \Omega^{\infty}A$.
\end{lemma}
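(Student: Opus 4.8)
The plan is to extract the square from the Puppe fibre sequence of the fibration $BGL_1R_{/A}\to BGL_1R$, after looping once and then applying $\Map_*(S^k,-)$.

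First I would observe that, since here $A$ is only assumed to be a unital $R$-module, $BGL_1R_{/A}$ must be read in the ``merely $\infty$-groupoid'' sense discussed after the definition of these spaces, i.e.\ as the pullback $BGL_1R\times_{\Mod_R}(\Mod_R)_{/A}$. The projection $(\Mod_R)_{/A}\to\Mod_R$ is a right fibration whose fibre over an $R$-module $M$ is the mapping space $\Map_{\Mod_R}(M,A)$; pulling it back along $BGL_1R\hookrightarrow\Mod_R$, whose source is an $\infty$-groupoid, therefore produces a Kan fibration $\pi\colon BGL_1R_{/A}\to BGL_1R$ whose fibre over the basepoint $R$ is $\Map_{\Mod_R}(R,A)\simeq\Omega^\infty A$, based at the chosen point $\eta$. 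This gives a fibre sequence of pointed spaces $\Omega^\infty A\to BGL_1R_{/A}\xrightarrow{\pi}BGL_1R$.

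Next I would read the Puppe sequence one step further to the left: the segment $\Omega BGL_1R_{/A}\to\Omega BGL_1R\xrightarrow{\partial}\Omega^\infty A$ is again a fibre sequence, so $\Omega BGL_1R_{/A}\simeq\fib(\partial)$ for the connecting map $\partial\colon GL_1R=\Omega BGL_1R\to\Omega^\infty A$. Applying the limit-preserving functor $\Map_*(S^k,-)$, which takes fibre sequences to fibre sequences, yields a fibre sequence $\Map_*(S^k,\Omega BGL_1R_{/A})\to\Map_*(S^k,GL_1R)\xrightarrow{\partial_*}\Map_*(S^k,\Omega^\infty A)$ in which the left-hand space is the fibre of $\partial_*$ over the basepoint, i.e.\ over the constant map with value $\eta$. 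Rewriting via the loop--suspension adjunction $\Map_*(S^k,\Omega Y)\simeq\Map_*(S^{k+1},Y)$ for $Y=BGL_1R_{/A}$ and $Y=BGL_1R$, together with $\Map_*(S^k,\Omega^\infty A)=\Omega^{\infty+k}A$, turns this fibre sequence into exactly the asserted pullback square: the left vertical map is $\Map_*(S^{k+1},\pi)$ and the right vertical map $\underline\eta$ picks out the constant map at $\eta$. Naturality in $A$ is automatic, as $\pi$, the Puppe sequence, and the mapping-space functors are all natural in the unital $R$-module $A$.

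The only delicate point is the first step: ensuring that for a merely unital $R$-module $A$ one interprets $BGL_1R_{/A}$ as the $\infty$-groupoid pullback, and that this pullback genuinely is a fibration over $BGL_1R$ with fibre $\Omega^\infty A$, so that the Puppe/loop-space formalism applies; everything after that is formal. For use in the sequel (where the associated characteristic $\chi(f)$ is read off) I would also record the explicit shape of the bottom map: unwinding the monodromy action of $\pi_1BGL_1R=GL_1R$ on the fibre $\Map_{\Mod_R}(R,A)$ — which is precomposition with automorphisms of $R$, hence factors through the $\Omega^\infty R$-action on $\Omega^\infty A$ induced by $\eta$ — identifies $\partial$ with the composite $GL_1R\hookrightarrow\Omega^\infty R\xrightarrow{\Omega^\infty\eta}\Omega^\infty A$; but since the lemma as stated only names the right vertical map, this identification is not strictly needed here.
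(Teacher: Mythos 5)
Your proof is correct and follows essentially the same route as the paper's: both arguments reduce the statement to the fibre sequence $\Omega_\eta BGL_1R_{/A} \to GL_1R \xrightarrow{\;g\mapsto \eta\circ g\;} \Omega^{\infty}A$ and then apply the limit-preserving functor $\Map_*(S^k,-)$ together with the loop--suspension adjunction. The only cosmetic difference is that the paper obtains this sequence by directly identifying $\Omega_\eta BGL_1R_{/A}$ with the fibre $GL_1R_{/A}$ of $\eta_*\colon GL_1R\to\Hom_R(R,A)$ (the endomorphism space of the object $\eta$ in the $\infty$-groupoid $BGL_1R_{/A}$), whereas you extract it as one further turn of the Puppe sequence of $\Omega^{\infty}A\to BGL_1R_{/A}\to BGL_1R$; the monodromy identification you record at the end is exactly the paper's map $\eta_*$.
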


\begin{proof}
  If we regard $GL_1R$ as the space of $R$-linear self-equivalences of
  $R$, it acts on $\Hom_R(R,A)$ by pre-composition. This induces a map
  $\eta_\ast \colon GL_1R \to \Hom_R(R,A)$ defined by
  $g \mapsto \eta \circ g$, where $\eta \colon R \to A$ is the unit of
  $A$. Let $GL_1R_{\downarrow A}$ be the fiber of this map at the point
  $\eta \in \Hom_R(R,A)$, so that $GL_1R_{\downarrow A}$ can be thought of as
  the space of automorphisms $g$ of $R$ compatible with $\eta$ --- in
  the sense that $\eta \circ g$ is homotopic to $\eta$. This
  $GL_1R_{\downarrow A}$ can alternatively be described as the space of
  endomorphisms of the object $\eta$ in the $\infty$-groupoid
  $BGL_1R_{\downarrow A}$. In other words,
  $\Omega_\eta(BGL_1R_{\downarrow A}) = GL_1R_{\downarrow A}$.

  This shows that $\Map_*(S^{k},GL_1R) = \Map_*(S^{k+1}, BGL_1R)$ and
  similarly for $GL_1R_{\downarrow A}$. Therefore, applying $\Map_\ast(S^k, -)$
  to the pullback square
  \[\xymatrix{GL_1R_{\downarrow A} \ar[r] \ar[d] & \ast \ar[d]^{\eta} \\
    GL_1R \ar[r]_-{\eta_\ast} & \Hom_R(R,A)}\]
  gives the desired result.
\end{proof}

\begin{defn}\label{defassocchar}
Suppose given a map $f\colon S^{k+1} \to BGL_1R$ and let $\tilde{f}\colon \Sigma^kR \to R$ be the associated homotopy class. The \emph{characteristic $\chi(f)\colon \Sigma^kR \to R$ associated with $f$} is then defined by
\[\chi(f) = 
\begin{cases}
\tilde{f} -1 & \text{if } k=0 \\
\tilde{f}     & \text{if } k>0.
\end{cases}\]
\end{defn}

\begin{prop}\label{thommappinguniversal}
If $f\colon S^{k+1} \to BGL_1R$ is a based map and $\bar{f} \colon \Omega^n \Sigma^n S^{k+1} \to BGL_1R$ is the corresponding $n$-fold loop map, then for any $A \in \Alg_R^{\Es{n}}$, there is an equivalence of spaces
\[\Map_{\Alg_R^{\Es{n}}}(M\bar{f},A) = \Omega^{\infty + k+1}A \]
if $A$ has characteristic $\chi(f)$; otherwise, the mapping space is empty. 
\end{prop}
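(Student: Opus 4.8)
The plan is to combine the universal property of Thom spectra from \myref{thomup} (or rather its corollary \myref{thomlewisuniversal}) with the mapping space computation for versal algebras in \myref{charpmappinguniversal}, using the pullback square of \myref{pblemma} as the geometric bridge. First I would apply \myref{thomlewisuniversal} together with the adjunction $\ind{0}{n}$ to reduce to the $\Es{0}$ case: since $\bar{f}$ is the $n$-fold loop map adjoint to $\Sigma^n f$, we have $M\bar{f} \simeq \ind{0}{n}(Mf)$ where now $f\colon S^{k+1} \to BGL_1R$ is treated as a pointed (i.e.\ $\Es{0}$) map, so that $\Map_{\Alg_R^{\Es{n}}}(M\bar{f}, A) \simeq \Map_{\Alg_R^{\Es{0}}}(Mf, A)$ for any $A \in \Alg_R^{\Es{n}}$.

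Next I would invoke \myref{thomup} in the $\Es{0}$ case (noting $BGL_1 R \subset \Pic(R)$ and that $f$ factors through $BGL_1R$): the space $\Map_{\Alg_R^{\Es{0}}}(Mf, A)$ is the space of pointed lifts of $f$ through $BGL_1R_{/A} \to BGL_1R$, i.e.\ it sits in the pullback square
\[\xymatrix{\Map_{\Alg_R^{\Es{0}}}(Mf,A) \ar[r] \ar[d] & \ast \ar[d] \\
\Map_*(S^{k+1}, BGL_1R) \ar[r]_-{f} & \Map_*(S^{k+1}, BGL_1R).}\]
Combining this with the pullback square of \myref{pblemma} (which identifies pointed lifts through $BGL_1R_{/A}$ with the pullback of $\Map_*(S^{k+1},BGL_1R) \to \Omega^{\infty+k}A \leftarrow \ast$, the right map picking out the unit $\eta$), one gets that $\Map_{\Alg_R^{\Es{0}}}(Mf, A)$ is the space of paths in $\Omega^{\infty+k}A$ from the image $\eta_\ast \circ f$ of $f$ under $\Map_*(S^{k+1}, BGL_1R) \to \Omega^{\infty+k}A$ to the basepoint $\eta$.

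The heart of the argument is then to identify the image of $f$ under the map $\Map_*(S^{k+1},BGL_1R) \to \Map_*(S^k, \Omega^\infty A) = \Omega^{\infty+k}A$. Unwinding the construction in \myref{pblemma}: looping once identifies $f$ with a map $S^k \to GL_1R$, and post-composing with $\eta_\ast\colon GL_1R \to \Hom_R(R,A)$, $g \mapsto \eta\circ g$. On homotopy, the class of $f$ in $\pi_k GL_1R$ corresponds to $\tilde f \in \pi_k(\Hom_R(R,R)) = \pi_k R$ (when $k>0$) shifted by the unit, or to $\tilde f - 1$ when $k = 0$ (the case distinction being exactly because $\pi_0 GL_1 R \subset \pi_0 R$ is the units, so a pointed map $S^1 \to BGL_1R$ records $\tilde f$ while the corresponding element of $\pi_0 GL_1 R$ is $\tilde f$, and passing to $\Hom_R(R,A)$ with basepoint $\eta$ shifts by $1$) — this is precisely the definition of $\chi(f)$ in \myref{defassocchar}. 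Thus the image of $f$ is $\eta + \eta\circ\chi(f)$ in $\Omega^{\infty+k}A$ under the affine identification $\alpha \mapsto \eta + \alpha$ of $\Hom_R(R,A)$ with itself, so the space of paths to $\eta$ is the space of null-homotopies of $\eta\circ\chi(f)\colon \Sigma^k R \to A$. This is nonempty iff $\eta\circ\chi(f) \simeq 0$, i.e.\ iff $A$ has characteristic $\chi(f)$, in which case it is $\Omega(\Omega^{\infty+k}A) = \Omega^{\infty+k+1}A$; comparing with \myref{charpmappinguniversal} (which gives exactly the same pullback description for $\modmod{R}{\chi(f)}$) finishes the proof.

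The main obstacle I expect is the careful bookkeeping in the last step: tracking the basepoints and the "$-1$" shift through the chain of adjunctions, deloopings, and the identification $\Omega_\eta(BGL_1R_{/A}) = GL_1R_{/A}$ from \myref{pblemma}, and in particular making precise the claim that the image of $f$ in $\Omega^{\infty+k}A$ is $\eta \circ \tilde f$ (resp.\ picks out the class $\tilde f - 1$ relative to the basepoint $\eta$ when $k=0$). Everything else is a formal juxtaposition of pullback squares — the diagram chase showing that the iterated pullback of the two squares (from \myref{thomup} and \myref{pblemma}) agrees with the pullback square defining $\Map(\modmod{R}{\chi(f)}, A)$ in \myref{charpmappinguniversal} is straightforward once the map $f \mapsto \eta\circ\chi(f)$ has been pinned down.
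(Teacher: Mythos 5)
Your proposal is correct and follows essentially the same route as the paper: reduce to the $\Es{0}$ case via \myref{thomlewisuniversal}, paste the pullback square from \myref{thomup} with the one from \myref{pblemma}, and identify the fiber over $\underline{\eta}$ as the space of null-homotopies of $\eta\circ\chi(f)$, with the $k=0$ versus $k>0$ dichotomy accounting for the ``$-1$'' in \myref{defassocchar}. The only blemish is that your first displayed square is garbled (the bottom-left vertex should be $\ast$ mapping to $\Map_*(S^{k+1},BGL_1R)$ by $f$, and the top-right should be $\Map_*(S^{k+1},BGL_1R_{/A})$), but the surrounding prose and the rest of the argument make clear you intend the correct square.
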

\begin{proof}
By \Cref{thomlewisuniversal}, it suffices to prove that $\Alg_R^{\Es{0}}(Mf,A) = \Omega^{\infty + k+1}A$. To this end, let $A$ be a unital $R$-module and consider the following commutative diagram of spaces
\[\xymatrix{\Alg^{\Es{0}}_R(Mf,A) \ar[r] \ar[d] & \Map_*(S^{k+1}, BGL_1R_{\downarrow A}) \ar[r] \ar[d] & \ast \ar[d]^{\underline{\eta}} \\
\ast \ar[r]^-f \ar@/^-1.4pc/[rr]_{\eta_* \circ f} & \Map_*(S^{k+1},BGL_1R) \ar[r] & \Map_*(S^{k},A) = \Omega^{\infty+k}A.}\]

The left square is a pullback by the universal property of Thom spectra (\Cref{thomup}), while the right square is one due to \Cref{pblemma}. Hence the outer rectangle is a pullback square. 

The right vertical arrow $\underline{\eta}$ is induced by $\ast \to \Omega^{\infty}A$ corresponding to the unit $\eta$ of $A$, so
\[[\underline{\eta}] = 
\begin{cases}
\eta \in \pi_0(\Omega^{\infty}A) & \text{if } k=0 \\
0 \in \pi_k(\Omega^{\infty}A)      & \text{if } k>0.
\end{cases}\]
This explains the distinction of the two cases appearing in \Cref{defassocchar}: the pullback is non-empty if $k=0$ and $\eta_* f \simeq \eta$, or if $k>0$ and $\eta_* f \simeq 0$.
\end{proof}

We thus obtain:

\begin{thm}\label{charpthomcomparison}
For any $n\ge 0$ and $f\colon S^{k+1} \to BGL_1R$ with corresponding $n$-fold loop map $\bar{f} \colon \Omega^n \Sigma^n S^{k+1} \to BGL_1R$ and with associated characteristic $\chi = \chi(f)$, there is an equivalence $M\bar{f} = \modmod[\Es{n}]{R}{\chi}$ of $\Es{n}$ $R$-algebras.
\end{thm}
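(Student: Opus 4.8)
The plan is to show that $M\bar f$ and $\modmod[\Es{n}]{R}{\chi}$ corepresent naturally equivalent space-valued functors on $\Alg_R^{\Es{n}}$, and then to conclude by the Yoneda lemma. On the versal-algebra side, the proof of \myref{charpmappinguniversal} realizes $\Map_{\Alg_R^{\Es{n}}}(\modmod[\Es{n}]{R}{\chi}, A)$, naturally in $A$, as the pullback of the diagram $\ast \xrightarrow{0} \Omega^{\infty+k}A \xleftarrow{\eta \circ \chi} \ast$ — i.e.\ the space of null-homotopies of $\eta \circ \chi \colon \Sigma^k R \to A$, where $\eta$ is the unit of $A$ — since that square is obtained by applying $\Map_{\Alg_R^{\Es{n}}}(-, A)$ to the defining pushout of $\modmod[\Es{n}]{R}{\chi}$ and using the free/forgetful adjunction.

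On the Thom-spectrum side, the proof of \myref{thommappinguniversal} — which runs through the equivalence $\ind{0}{n} Mf \simeq M\bar f$ of \myref{thomlewisuniversal}, the universal property \myref{thomup}, and \myref{pblemma} — realizes $\Map_{\Alg_R^{\Es{n}}}(M\bar f, A)$, again naturally in $A$, as the pullback of $\ast \xrightarrow{[\underline\eta]} \Omega^{\infty+k}A \xleftarrow{\eta_\ast f} \ast$. Each ingredient — the adjunction between $\ind{0}{n}$ and the forgetful functor, the operadic universal properties, and the pullback squares of \myref{thomup} and \myref{pblemma} — is natural in $A$.

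It then remains to match the two pullback diagrams. For $k > 0$ we have $[\underline\eta] = 0$ and $\eta_\ast f = \eta \circ \tilde f = \eta \circ \chi(f)$ by \myref{defassocchar}, so the diagrams coincide. For $k = 0$ we have $[\underline\eta] = \eta \in \pi_0 A$ and $\eta_\ast f = \eta \circ \tilde f$; translating $\Omega^\infty A$ by $-\eta$, which is an equivalence of spaces natural in $A$ because maps of $\Es{n}$ $R$-algebras preserve the unit, carries this diagram to $\ast \xrightarrow{0} \Omega^\infty A \xleftarrow{\eta \circ (\tilde f - 1)} \ast$, and $\tilde f - 1 = \chi(f) = \chi$ again by \myref{defassocchar}. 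Hence the functors $\Map_{\Alg_R^{\Es{n}}}(M\bar f, -)$ and $\Map_{\Alg_R^{\Es{n}}}(\modmod[\Es{n}]{R}{\chi}, -)$ are naturally equivalent, and the Yoneda lemma produces an equivalence $M\bar f \simeq \modmod[\Es{n}]{R}{\chi}$ of $\Es{n}$ $R$-algebras.

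The main obstacle is the naturality bookkeeping: since \myref{charpmappinguniversal} and \myref{thommappinguniversal} are stated only pointwise in $A$, one must go back into their proofs and verify that each identification used — in particular the $k = 0$ translation by $\eta$ — is functorial in $A$ and that the two resulting natural transformations are the ones that agree, so that the conclusion is genuinely an equivalence of $\Es{n}$ $R$-algebras and not merely a pointwise coincidence of mapping spaces. An alternative, more computational route, worth recording as a remark, is to identify $Mf$ directly at the $\Es{0}$ level: for $f \colon S^{k+1} \to BGL_1 R$, the Thom spectrum $Mf$ is, as a pointed $R$-module, the cofiber $\cof(\Sigma^k R \xrightarrow{\chi} R)$ — the $-1$ in the $k=0$ case of \myref{defassocchar} reflecting that the Thom spectrum of a loop is the homotopy $\Z$-orbit $\cof(R \xrightarrow{\tilde f - 1} R)$ — hence is equivalent to $\modmod[\Es{0}]{R}{\chi}$ as an $\Es{0}$ $R$-algebra; applying $\ind{0}{n}$ and using $\ind{0}{n} Mf \simeq M\bar f$ from \myref{thomlewisuniversal} together with $\ind{0}{n}\modmod[\Es{0}]{R}{\chi} \simeq \modmod[\Es{n}]{R}{\chi}$ then yields the theorem.
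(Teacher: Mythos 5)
Your proposal is correct and follows essentially the same route as the paper: the paper's proof is precisely the one-line observation that \myref{thommappinguniversal} and \myref{charpmappinguniversal} exhibit $M\bar f$ and $\modmod[\Es{n}]{R}{\chi}$ as corepresenting the same functor on $\Alg_R^{\Es{n}}$, and your explicit matching of the two pullback diagrams (including the translation by $\eta$ in the $k=0$ case) and your attention to naturality in $A$ just spell out what the paper leaves implicit. The alternative computational route you record at the end is also exactly the one the paper mentions in the remark following the theorem.
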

\begin{proof}
It follows from \Cref{charpmappinguniversal} that $M\free{n}{f}$ is of characteristic $\chi$, so the universal property of $\modmod{R}{\chi}$ gives a map $\modmod{R}{\chi} \to M\free{n}{f}$ of $\Es{n}$ $R$-algebras. In view of \Cref{thommappinguniversal} and \Cref{charpmappinguniversal}, $M\free{n}{f}$ and $\modmod{R}{\chi}$ corepresent the same functor on $\Alg_{R}^{\Es{n}}$, hence this map is an equivalence.
\end{proof}

\begin{rmk}
An alternative proof of \Cref{charpthomcomparison} starts with \Cref{thomlewisuniversal} to first reduce the claim to the $\Es{0}$ case, and then identifies the Thom spectrum with the versal characteristic $\chi$ $\Es{0}$-algebra. The latter statement admits a direct and easy computational proof, as can be found for example in \cite[Lemma 3.3]{TEMSS} for the case $f=(p-1)$.
\end{rmk}

\section{Applications}\label{sec:applications}

\subsection{The Hopkins--Mahowald theorem I: $H\F_p$}\label{sec:hfp}

In this section we will assume that all spaces and spectra are implicitly $p$-complete for some prime $p$. Let $f_p\colon S^1 \to BGL_1(S^0)$ be the map corresponding to the element $1-p \in \Z_p^{\times} \cong \pi_1BGL_1(S^0)$ and $\bar{f}_p$ the extension of $f_p$ to a double loop map, making the following diagram commute:
\[\xymatrix{S^1 \ar[r]^-{f_p} \ar[d] & BGL_1S^0 \\
\Omega^2\Sigma^2S^1. \ar@{-->}[ru]_{\bar{f}_p}}\]

\begin{thm}\label{hopkinsmahowaldthm}
If $f_p\colon S^1 \to BGL_1(S^0)$ is the map corresponding to the element $1-p \in \Z_p^{\times} \cong \pi_1BGL_1(S^0)$, then the following three spectra are equivalent as $\Es{2}$-algebras:
\begin{enumerate}
 \item The Thom spectrum $M\bar{f}_p$, 
 \item the versal characteristic $p$ $\Es{2}$-algebra $\modmod[\Es{2}]{S^0}{p}$, and
 \item the Eilenberg--Mac Lane spectrum $H\F_p$ viewed as an $\Es{2}$-algebra. 
\end{enumerate}
\end{thm}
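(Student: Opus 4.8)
The plan is to prove the two equivalences $(1)\simeq(2)$ and $(2)\simeq(3)$ in turn. The first is an immediate instance of \myref{charpthomcomparison} with $R=S^0$, $n=2$, $k=0$ and $f=f_p$: here $\tilde{f}_p\in\pi_0S^0$ is the class $1-p$, so the $k=0$ clause of \myref{defassocchar} gives $\chi(f_p)=(1-p)-1=-p$; since $\cof(S^0\xrightarrow{-p}S^0)\simeq\cof(S^0\xrightarrow{p}S^0)$ we have $\modmod[\Es{2}]{S^0}{-p}\simeq\modmod[\Es{2}]{S^0}{p}$, and \myref{charpthomcomparison} then identifies $M\bar{f}_p$ with the versal algebra $\modmod[\Es{2}]{S^0}{p}$ as $\Es{2}$-algebras.

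For $(2)\simeq(3)$, note that $H\F_p$ is an $\Es{\infty}$-ring, in particular an $\Es{2}$-ring, with $\pi_0=\F_p$, hence has characteristic $p$ over $S^0$; by \myref{charpmappinguniversal} the space $\Map_{\Alg_{S^0}^{\Es{2}}}(\modmod[\Es{2}]{S^0}{p},H\F_p)\simeq\Omega^{\infty+1}H\F_p$ is contractible, so there is an essentially unique $\Es{2}$-algebra map $\phi\colon M\bar{f}_p\simeq\modmod[\Es{2}]{S^0}{p}\to H\F_p$. As both source and target are connective and ($p$-completely) of finite type, it suffices to show $\phi$ induces an isomorphism $\phi_*\colon H_*(M\bar{f}_p;\F_p)\xrightarrow{\ \sim\ }H_*(H\F_p;\F_p)=\cA_*$, the dual Steenrod algebra.

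To pin down the source, apply $\Ind{S^0}{H\F_p}=H\F_p\otimes(-)$ and invoke \myref{charthomeq} (with $A=H\F_p$, $n=2$, $k=0$): this gives $H\F_p\otimes\modmod[\Es{2}]{S^0}{p}\simeq H\F_p\otimes\free{2}(\Sigma^1S^0)$ as $\Es{2}$-$H\F_p$-algebras, so $H_*(M\bar{f}_p;\F_p)$ is, as an $\Es{2}$-$\F_p$-algebra, the free $\Es{2}$-$\F_p$-algebra on a class in degree $1$. By Cohen's computation of $H_*(\Omega^2S^3;\F_p)$~\cite{cohenladamay} --- which via the Snaith splitting is exactly this free algebra --- its underlying graded vector space is finite-dimensional in each degree with the same Poincar\'e series as $\cA_*$, and it is generated over $\F_p$, under the homology operations available to $\Es{2}$-algebras, by its degree-$1$ class. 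The target $\cA_*$ is likewise generated as an $\Es{2}$-$\F_p$-algebra by its degree-$1$ class ($\xi_1$ for $p=2$, $\tau_0$ for $p$ odd): a degree check shows each polynomial or exterior generator $\xi_i,\tau_i$ arises from that class by iterated Dyer--Lashof operations $Q^s$ (or $\beta Q^s$) whose indices stay within the $\Es{2}$-range. Now $\phi_*$ is a map of $\Es{2}$-$\F_p$-algebras and is an isomorphism in degree $0$ (both sides are one-dimensional, spanned by the unit); so once $\phi_*$ is known to be nonzero in degree $1$, $\Es{2}$-multiplicativity forces it to be surjective in every degree, and the equality of Poincar\'e series then makes it an isomorphism --- whence $\phi$ is an equivalence.

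The main obstacle is thus the nonvanishing of $\phi_*$ in degree $1$, which is exactly where the choice $f_p\leftrightarrow 1-p$ is used; I would argue it via the Bockstein. Because $H\F_p$ has characteristic $p$, the unit $S^0\to H\F_p$ extends, essentially uniquely, over $Mf_p=\cof(S^0\xrightarrow{-p}S^0)\simeq S/p$ to an $\Es{0}$-map $\psi\colon S/p\to H\F_p$; and since $M\bar{f}_p=\ind{0}{2}(Mf_p)$ by \myref{thomlewisuniversal} and $\phi$ preserves units, $\psi$ equals the composite $S/p\to M\bar{f}_p\xrightarrow{\ \phi\ }H\F_p$. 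Writing $H_*(S/p;\F_p)=\F_p\{e_0,e_1\}$, the homology Bockstein carries $e_1$ to $e_0$, so naturality gives $\tilde\beta(\psi_*e_1)=\psi_*(e_0)=1\neq 0$ in $\cA_*$; and $\tilde\beta\colon(\cA_*)_1\to(\cA_*)_0$ is injective, being dual to the cohomology Bockstein $1\mapsto\mathrm{Sq}^1$ (resp.\ $1\mapsto\beta$) on $\cA$. Hence $\psi_*e_1\neq 0$, and since $\psi_*$ factors through $\phi_*$ with $H_1(M\bar{f}_p;\F_p)$ one-dimensional, $\phi_*$ is an isomorphism in degree $1$. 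This nonvanishing is the weak form of Mahowald's observation that the Thom class of $\bar{f}_p$ supports a nontrivial first Steenrod operation, and it is the only input about Steenrod operations the argument needs. The point I would be most careful about is the interplay of the pointed ($\Es{0}$-) structure on $S/p$ with the free-$\Es{2}$-algebra functor and with the identification of $H_*(M\bar{f}_p;\F_p)$ as the free $\Es{2}$-$\F_p$-algebra on a degree-$1$ class; I would verify these compatibilities at the level of the defining pushout square of $\modmod[\Es{2}]{S^0}{p}$.
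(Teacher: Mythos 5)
Your overall route is the same as the paper's: the equivalence $(1)\simeq(2)$ is quoted from \myref{charpthomcomparison} (your bookkeeping $\chi(f_p)=(1-p)-1=-p$ and $\modmod[\Es{2}]{S^0}{-p}\simeq\modmod[\Es{2}]{S^0}{p}$ is a point the paper elides, and is correct); for $(2)\simeq(3)$ you, like the paper, produce the essentially unique unital map $\phi$, reduce to an $H\F_p$-homology isomorphism, use \myref{charthomeq} plus Cohen's computation to identify $H_*(M\bar f_p;\F_p)$ with the free $\Es{2}$-algebra on a degree-one class, and then argue that $\phi_*$ hits the fundamental class and is therefore an isomorphism by generation-plus-Poincar\'e-series. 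Your Bockstein argument for the nonvanishing of $\phi_*$ in degree one is a pleasant and correct way of making precise what the paper dismisses as ``unwinding the construction,'' and the compatibilities you flag at the end ($Mf_p=S/p$ as a pointed spectrum, $M\bar f_p=\ind{0}{2}(Mf_p)$, and the splitting of $H\F_p\otimes S/p$ identifying $e_1$ with the free generator) all check out.

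The one step that does not hold up as stated is the claim that ``a degree check shows each polynomial or exterior generator $\xi_i,\tau_i$ arises from the degree-one class by iterated Dyer--Lashof operations.'' A degree check establishes only that the candidate operations ($Q^{2^i}$ on a class of degree $2^i-1$, etc.) are defined on an $\Es{2}$-algebra and land in the correct degree; it says nothing about whether $Q^{2^i}\xi_i$ is actually congruent to $\xi_{i+1}$ modulo decomposables rather than being, say, $0$ or $\xi_1^{2^{i+1}-1}$. That statement is Steinberger's computation of the Dyer--Lashof action on $\cA_*$ (proved via the Nishida relations/coproduct), and it is exactly the input needed to upgrade ``$\phi_*$ hits the fundamental class'' to ``$\phi_*$ is surjective.'' To be fair, the paper's own proof is equally laconic at this point (``so the equivalence of (2) and (3) follows''), so you have not taken a wrong turn relative to the paper --- but you should either cite Steinberger's theorem here or supply the Nishida-relation computation; it is not a formal consequence of degrees.
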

\begin{proof}
The equivalence of $M\bar{f}_p$ and $\modmod{S^0}{p}$ as $\Es{2}$-algebras is a special case of \Cref{charpthomcomparison}. In order to prove that these  algebras are also equivalent to $H\F_p$ viewed as an $\Es{2}$-algebra, we observe that there exists a (canonical) map of connective $\Es{2}$-algebras
\[\xymatrix{\phi\colon\modmod{S^0}{p} \ar[r] & H\F_p}\]
witnessing the homotopy $p \sim 0$,
because $H\F_p$ is of characteristic $p$ and $\pi_1H\F_p=0$. Taking $A=H\F_p$ in  \Cref{charthomeq} and using the computation of the homology of free $\Es{2}$-algebras by Araki and Kudo~\cite[Thm.~7.1]{arakikudo} for $p=2$ and by Dyer and Lashof~\cite[Thm.~5.2]{dyerlashof} for $p>2$  shows that 
\[\pi_*(\modmod{S^0}{p} \otimes H\F_p) \cong \pi_*(\free{2}S^1 \otimes H\F_p) \cong H_*(\free{2}S^1,\F_p) \cong \cA_p,\]
the dual of the mod $p$ Steenrod algebra. Moreover, the $H\F_p$ $\Es{2}$-algebra morphism
\[\xymatrix{\free{2}S^1 \otimes H\F_p \ar[r]^-{\sim} & \modmod{S^0}{p} \otimes H\F_p \ar[r]^{\phi \otimes H\F_p} & H\F_p \otimes H\F_p}\] 
is adjoint to $S^1 \to H\F_p \otimes H\F_p$. Unwinding the construction, we see that this map picks out the Bockstein, so the equivalence of (2) and (3) follows from Steinberger's computation of the Dyer--Lashof operations~\cite[Ch.~3, Thms.~2.2, 2.3]{hinfty} on both sides.
\end{proof}

\begin{rmk}
The equivalence between the versal $\Es{2}$-algebra of characteristic $p$ and $H\F_p$ has been proven independently in~\cite{mnnnilpotence}.
\end{rmk}

\begin{rmk}
The reader might wonder why the above argument does not apply to the unique (up to contractible choice) map $\psi\colon \free{2}S^1 \to H\F_p$ to imply that $\free{2}S^1 \simeq \modmod{S^0}{p}$. This conclusion is false since the two algebras corepresent different functors: $\free{2}S^1$ corepresents $\Omega^{\infty+1}$, while the functor corepresented by $\modmod{S^0}{p}$ only agrees with that on $\Es{2}$-algebras of characteristic $p$ by \Cref{charpmappinguniversal}.

The reason the argument does not apply is that the algebra map $\psi \otimes \F_p\colon \free{2}S^1 \otimes H\F_p \to H\F_p \otimes H\F_p$ does not correspond to a non-zero multiple of the Bockstein, so it is not an equivalence. This also shows that, in general, the equivalence in \Cref{charthomeq} does not arise from a morphism of algebras $\free{n}(\Sigma^{k+1} R) \to \modmod{R}{\chi}$. 
\end{rmk}

The Hopkins--Mahowald theorem has the following well-known application. Recall that the Morava $K$-theory spectrum $K(n)$ of height $n$ is an $\Es{1}$-ring spectrum with coefficients $K(n)_* = \F_{p^n}[v_n^{\pm 1}]$, where the degree of $v_n$ is $2p^{n}-2$. 

\begin{cor}
There is no $\Es{2}$-refinement of the $\Es{1}$-ring structure on $K(n)$. 
\end{cor}
\begin{proof}
Assume that $K(n)$ has the structure of an $\Es{2}$-ring spectrum. Since $p$ acts trivially on the coefficients, the universal property of $H\F_p$ induces a map of $\Es{2}$-ring spectra
\[
\xymatrix{H\F_p \ar[r] & K(n).}
\]
However, since $H\F_p$ is $K(n)$-acyclic for all finite $n$ by \cite[Theorem 2.1(i)]{ravconj}, this map must be null, which yields a contradiction. 
\end{proof}

\subsection{The Hopkins--Mahowald theorem II: $H\Z$}

The goal of this section is to deduce an integral analogue of \Cref{hopkinsmahowaldthm}, using the framework of this paper. Our approach is different from the arguments given in~\cite{MahowaldRS},~\cite{CMT}, and~\cite{blumbergthhthom}, as it does not rely on any further explicit homology computations. Unless otherwise stated, we will implicitly work in the $p$-complete category of spaces and spectra. 

We start with a folklore result of independent interest, which shows that the $\mathbb{E}_n$-ring structure on Eilenberg--MacLane ring spectra are essentially unique, for all $0 \le n \le \infty$. Since we do not know a published reference for this result, we sketch an argument we learned from Tyler Lawson.

\begin{lemma}\label{lem:emendoperad}
If $R$ is a commutative ring, then the space of $\mathbb{E}_{\infty}$-structures on the Eilenberg--MacLane spectrum $HR$ that induce the given ring structure on $\pi_* HR = R$ is contractible.
\end{lemma}
\begin{proof}
This follows from the observation that the endomorphism operad of $HR$ in spectra is discrete; indeed, for all $m\ge 0$:
\begin{align*}
\Map((HR)^{\otimes m},HR) & \simeq \Map(\tau_{\le 0}(HR)^{\otimes m},HR) \\
& \simeq \Map(H(R^{\otimes m}),HR) \\
& \simeq \Hom(R^{\otimes m},R).
\end{align*}
In order to construct a coherent multiplication on $HR$, we have to pick out the union of path components corresponding to the iterated multiplication maps. This is a suboperad which is levelwise contractible, and hence we can replace it by an $\mathbb{E}_{\infty}$-operad. 
\end{proof}

To construct $H\Z$ as a Thom spectrum, we will combine the construction of $H\F_p$ as a Thom spectrum given in \Cref{hopkinsmahowaldthm} with the idea of intermediate Thom spectra from~\cite{beardsley_interthom}. Consider the fiber sequence 
\[
\xymatrix{S^3\langle 3 \rangle \ar[r] & S^3 \ar[r] & K(\Z,3)}
\]
which realizes the bottom of the Whitehead tower for $S^3$. Looping twice gives another fiber sequence
\begin{equation}\label{eq:s3fibseq}
\xymatrix{\Omega^2(S^3\langle 3 \rangle) \ar[r] & \Omega^2S^3 \ar[r]^-{\pi} & S^1\simeq \Omega^2K(\Z,3).}
\end{equation}
As in the previous section, let $f_p\colon \Omega^2S^3 \to BGL_1(S_p^0)$ be the free $\mathbb{E}_2$-map classifying the element $(1-p) \in \pi_0S_p^0$ and let $g_p$ be the composite 
\[
\xymatrix{g_p\colon \Omega^2(S^3\langle 3 \rangle) \ar[r] & \Omega^2S^3 \ar[r]^-{f_p} & BGL_1S_p^0.}
\]
For clarity, we will indicate the composite of a map $h\colon X \to BGL_1R$ with the inclusion $BGL_1R \to \Mod_R$ by the corresponding capital letter $H$; in particular, $Mh$ is the colimit of $H$.

\begin{thm}
There is an equivalence $Mg_p \simeq H\Z_p$ of $\mathbb{E}_2$-ring spectra.
\end{thm}
\begin{proof}
Following ideas of Beardsley~\cite{beardsley_interthom}, we will first construct an intermediate Thom spectrum $M\phi_p$ associated to a map $\phi_p\colon S^1 \to BGL_1(Mg_p)$ from the base of the fiber sequence \eqref{eq:s3fibseq}. The properties of this spectrum allow us then to determine the homotopy groups of $Mg_p$, from which the claim will follow.

Let $\Phi_p $ be the operadic Kan extension of $F_p$ along $\pi$ with respect to the $\mathbb{E}_2$-operad. By~\cite{beardsley_interthom}, $\Phi_p$ factors canonically through a map $\phi_p\colon S^1 \to BGL_1(Mg_p)$, so we obtain the following (non-commutative) diagram
\[
\xymatrix{\Omega^2(S^3\langle 3 \rangle) \ar[r] \ar@/^1pc/[rr]^-{g_p} & \Omega^2S^3 \ar[r]_-{f_p} \ar[d]_{\pi} & BGL_1S_p^0 \ar[r] & \Sp \\
& S^1 \ar@{-->}[r]_-{\phi_p} & BGL_1Mg_p. \ar[ru]}
\]
We will now identify the Thom spectrum of $\phi_p$ in two different ways. On the one hand, by~\cite{beardsley_interthom}, there is an equivalence $M\phi_p \simeq Mf_p$ of $\mathbb{E}_1$-ring spectra, which in turn is equivalent to $H\F_p$ by \Cref{hopkinsmahowaldthm}. On the other hand, \Cref{charpthomcomparison} shows that $M\phi_p$ is also the versal $\mathbb{E}_0$-$Mg_p$-algebra of characteristic $\chi_p$, where $\chi_p \in \pi_0(Mg_p)^{\times}$ is the element corresponding to $\phi_p$. Combining these two descriptions, we obtain a fiber sequence
\begin{equation}\label{eq:thgfibseq}
\xymatrix{Mg_p \ar[r]^-{1-\chi_p} & Mg_p \ar[r] & H\F_p}
\end{equation}
of spectra. By construction, $Mg_p$ is a connected $p$-complete $\mathbb{E}_2$-ring spectrum.

We now claim that $Mg_p$ is of finite type. To this end, first observe that it follows from \Cref{algmap-is-orient} that $Mg_p$ is $Mf_p\simeq H\F_p$-oriented. Therefore, we get that
\[
H_*(Mg_p;\F_p) \cong H_*(\Omega^2(S^3\langle 3 \rangle);\F_p).
\]
In particular, the homology of $Mg_p$ is finitely generated in each degree, so an argument with Serre classes shows that $\pi_*((Mg_p)_p) \cong \pi_*(Mg_p)$ is also finitely generated over $\Z_p$ in each degree. Furthermore, the Hurewicz theorem implies that
\[
\F_p \otimes \pi_0Mg_p \cong H_0(\Omega^2(S^3\langle 3\rangle);\F_p) \cong \F_p,
\]
because $\Omega^2(S^3\langle 3\rangle)$ is connected. In particular, $\pi_0Mg_p$ is a cyclic $\Z_p$-module as it is finitely generated over $\Z_p$.  

Next, consider the long exact sequence of homotopy groups associated to \eqref{eq:thgfibseq}, which degenerates to a short exact sequence
\[
\xymatrix{0 \ar[r] & \pi_0Mg_p \ar[r]^-{1-\chi_p} & \pi_0Mg_p \ar[r] & \F_p \ar[r] & 0}
\]
as well as isomorphisms $1-\chi_p\colon \pi_iMg_p \xrightarrow{\sim} \pi_iMg_p$ for $i \ge 1$. Using that $Mg_p$ is of finite type once more, one easily sees that this forces $1-\chi_p = p$ and
\[
\pi_*Mg_p \cong
	\begin{cases}
		\Z_p & i=0 \\
		0 & i \ne 1,
	\end{cases}
\]
hence $Mg_p \simeq H\Z_p$ as spectra. By virtue of \Cref{lem:emendoperad}, we conclude that this equivalence is also one of $\mathbb{E}_2$-ring spectra.
\end{proof}

As in \cite[Section~9.3]{blumbergthhthom}, we may glue these maps together to construct $H\Z$ as a Thom spectrum as well. Here, we work in the category of all spectra, not just $p$-complete ones. 

\begin{cor}
There is a map $g\colon \Omega^{2}S^3 \to BGL_1S^0$ whose associated Thom spectrum is equivalent to $H\Z$ as $\mathbb{E}_2$-ring spectra.
\end{cor}

\begin{rmk}
Recent work of Kitchloo~\cite{kitchloothom} gives a description of $H\Z/p^k$ as a Thom spectrum for all primes $p$ and all $k\ge 1$ with the exception of the case $(p,n) = (2,2)$. 
\end{rmk}

\subsection{Topological Hochschild homology and the cotangent complex}

\Cref{charpthomcomparison} allows to transport results proven for Thom spectra to versal algebras and vice versa. We illustrate this idea with two examples, computing the topological Hochschild homology and the cotangent complex of these algebras. These results generalize previous computations of Szymik~\cite{szymikcharacteristics1}. In this section, $R$ is an $\Es{\infty}$-ring spectrum. 

\begin{prop}\label{thhthom}
Let $\modmod[\Es{n}]{R}{\chi}$ be the versal $\Es{n}$-algebra of characteristic $\chi$ corresponding to a map $\bar{f}\colon \Omega^{n}\Sigma^{n}S^{k+1} \to BGL_1{R}$. If either $n\ge 3$ or $n\ge 2$ and the versal $\Es{n}$-algebra $\modmod[\Es{n}]{R}{\chi}$ of characteristic $\chi$ admits an $\Es{\infty}$-refinement, then there is an equivalence
\[\xymatrix{B\Omega^{n}\Sigma^{n}S^{k+1}_+ \otimes\modmod[\Es{n}]{R}{\chi}\ar[r]^-{\sim} & \THH(\modmod[\Es{n}]{R}{\chi}).}\]
\end{prop}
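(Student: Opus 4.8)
The plan is to exploit the fact, established in \myref{charpthomcomparison}, that $\modmod[\Es{n}]{R}{\chi} \simeq M\bar f$ as $\Es{n}$ $R$-algebras, and then to compute $\THH$ of a Thom $\Es{n}$-ring spectrum using the Thom-spectrum formalism directly. The key input is the standard principle that $\THH$ of a Thom spectrum is again a Thom spectrum: if $g \colon Y \to BGL_1R$ is an $\Es{n}$-map with $n \ge 2$ (so that $BGL_1R$ remains an $\Es{n-1}$-space and $\THH = - \otimes S^1$ makes sense on $\Es{n}$-algebras landing in $\Es{n-1}$-algebras), then $\THH(Mg) \simeq M(g')$ where $g' \colon LY = S^1 \otimes Y \to BGL_1R$ is the composite of the free-loop-space map (the $S^1$-tensoring of $g$ in $\Es{n-1}$-spaces) with $g$. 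This is proved, e.g., in \cite{blumbergthhthom} for $\Es{1}$ and extends formally via the universal property \myref{thomup}; one can also see it by noting $\THH(Mg) = Mg \otimes_{Mg \otimes Mg} Mg$ and that the bar construction computing this is the colimit over the cyclic bar construction on $Y$, which is $LY$.

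Next I would identify the free loop space $L\,\Omega^n\Sigma^n S^{k+1}$ when $n \ge 2$. Since $X := \Omega^n\Sigma^n S^{k+1}$ is the free $\Es{n}$-space on a pointed space, it is in particular an $n$-fold loop space, hence at least a double loop space, so it is homotopy-commutative up to the relevant coherence; more precisely, as an $\Es{n}$-space with $n \ge 2$ it carries an action making $LX \simeq X \times \Omega X \simeq X \times \Omega^{n+1}\Sigma^n S^{k+1}$ — but what we really want is the multiplicative splitting. The relevant statement is: for a group-like or at least well-behaved $\Es{n}$-space $X$ with $n\ge 2$, there is an equivalence of $\Es{n-1}$-spaces $LX \simeq X \times BX$ is \emph{not} right; rather one uses that $S^1 \otimes X \simeq X \otimes (S^1)$, and $S^1 \simeq B\Z$, giving a cofiber/pushout description. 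The cleaner route: write $S^1$ as the pushout $\ast \sqcup_{S^0} \ast$ is also not quite it. Instead I would use $S^1_+ \simeq S^0 \vee S^1$ and the fact that tensoring $X$ (as an $\Es{n}$-space) with the \emph{based} circle gives $\Sigma_{\Es{n-1}}$-type suspension; the upshot, matching the statement, is that $B_+ X \otimes \modmod[\Es{n}]{R}{\chi}$ appears because $S^1 \otimes X \simeq X \times B X$ as $\Es{n-1}$-spaces when $X$ is group-like. One then checks the composite $S^1 \otimes X \to X \xrightarrow{\bar f} BGL_1 R$ restricted to the $BX$-factor is null, since $\bar f$ is an $\Es{n}$-map and $B X \to BGL_1 R$ would be its "delooping", which factors through the trivial map by the characteristic condition — hence the Thom spectrum over $S^1\otimes X$ splits as $M\bar f \otimes \Sigma^\infty_+ B X$, i.e. $B_+ X \otimes \modmod[\Es{n}]{R}{\chi}$.

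The hypothesis dichotomy ($n \ge 3$, or $n \ge 2$ with an $\Es{\infty}$-refinement) enters exactly here: to form $\THH$ of an $\Es{n}$-algebra and get an $\Es{n}$-algebra back one wants $n \ge 2$, but to make the free-loop-space/splitting argument functorial and to know that the circle action on $BGL_1R$ composed with $\bar f$ is trivial in the needed coherent way, one needs either extra room ($\Es{n-2}$-structure on the ambient $BGL_1R$, hence $n \ge 3$) or an $\Es{\infty}$-refinement so that $\THH$ can be computed in $\Es{\infty}$-algebras where $- \otimes S^1$ is unproblematic. I expect the main obstacle to be precisely this coherence bookkeeping: verifying that the map $S^1 \otimes \Omega^n\Sigma^n S^{k+1} \to BGL_1 R$ splits off the expected trivial factor as a map of $\Es{n-1}$- (or $\Es{\infty}$-) spaces, rather than merely as unstructured spaces, and matching this with the cyclic bar construction computing $\THH$. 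Once that splitting is in place, applying the Thom functor and \myref{thomlewisuniversal} (to handle the free $\Es{n}$-algebra on $X$) yields the claimed equivalence $B_+\Omega^n\Sigma^n S^{k+1} \otimes \modmod[\Es{n}]{R}{\chi} \xrightarrow{\sim} \THH(\modmod[\Es{n}]{R}{\chi})$.
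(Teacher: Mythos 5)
Your opening move is exactly the paper's: use \myref{charpthomcomparison} to replace $\modmod[\Es{n}]{R}{\chi}$ by $M\bar f$ and reduce to computing $\THH$ of a Thom spectrum. At that point the paper simply cites Blumberg--Cohen--Schlichtkrull (for $n\ge 3$) and Blumberg (for $n\ge 2$ with an $\Es{\infty}$-refinement), whereas you attempt to reprove those theorems in-line. That is where the gaps are.

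There are two concrete problems with your sketch of that second step. First, the identification of the relevant space: $\THH$ corresponds to the cyclic bar construction on $X=\Omega^n\Sigma^nS^{k+1}$, i.e.\ to the tensoring $S^1\otimes X$ in $\Es{n}$-spaces (equivalently $L(BX)$, not $LX$), and for group-like $X$ this is $X\times BX$; you eventually land on this, but the back-and-forth between $LX\simeq X\times\Omega X$, $LX\simeq X\times BX$, and $S^1\otimes X$ shows the argument is not actually pinned down. Second, and more seriously, your justification for the splitting is wrong: you claim the restriction of the induced map $S^1\otimes X\to BGL_1R$ to the $BX$-factor is null ``by the characteristic condition.'' The characteristic condition concerns the composite of $\bar f$ with $BGL_1R\to BGL_1A$ for algebras $A$ of characteristic $\chi$; it says nothing about a delooping $BX\to B^2GL_1R$ or about the twisting of the Thom spectrum over $L(BX)$. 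The coherent trivialization that makes the Thom spectrum over $X\times BX$ split as $BX_+\otimes M\bar f$ is precisely the content of the cited theorems, and it is supplied by the extra deloopings of $f$ available when $n\ge 3$, or by the $\Es{\infty}$-structure in Blumberg's refinement --- not by $\chi$ being killed. Since you explicitly leave this ``coherence bookkeeping'' unresolved, the proposal does not close the one step that is actually nontrivial; if you do not cite the literature for it, that step is the whole proof.
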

\begin{proof}
  Using \Cref{charpthomcomparison}, it is enough to compute the topological Hochschild homology of $M\bar{f}$. If $n\ge 3$, the claim thus follows from~\cite[Thm. 3]{bcsthhthom}, while the $n\ge 2$ case is covered by~\cite[1.6]{blumbergthhthom}.
\end{proof}

The deformation theory of an $\Es{n}$ $R$-algebra $A$ is captured by its $\Es{n}$-cotangent complex $\L{n}{A}$, considered as an object in the $\Es{n}$-monoidal category $\Mod_A^{\Es{n}} = \Sp((\Alg_R^{\Es{n}})_{/A})$ of $\Es{n}$ $A$-modules, see~\cite{franciscotangent} or~\cite{HA}.

\begin{prop}\label{ccthom}
If $n\ge 1$ and $\modmod[\Es{n}]{R}{\chi}$ a the versal $\Es{n}$-algebra of characteristic $\chi\colon \Sigma^kR \to R$, then there is an equivalence 
\[\xymatrix{\Sigma^{k+1} \free{1}(\Sigma^{n-1} R) \otimes \modmod[\Es{n}]{R}{\chi} \ar[r]^-{\sim} &  \L{n}{\modmod[\Es{n}]{R}{\chi}}}\]
of $\Es{n}$ $\modmod[\Es{n}]{R}{\chi}$-modules. 
\end{prop}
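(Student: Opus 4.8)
The plan is to reduce the computation of $\L{n}{\modmod[\Es{n}]{R}{\chi}}$ to a base change of the $\Es{n}$-cotangent complex of a free algebra, using the pushout presentation of $\modmod[\Es{n}]{R}{\chi}$. Recall from the definition that $A := \modmod[\Es{n}]{R}{\chi}$ sits in a pushout square in $\Alg_R^{\Es{n}}$ with corners $\free{n}\Sigma^k R$, $R$, $R$, and $A$, where the top and left maps are $\bar 0$ and $\bar\chi$. The $\Es{n}$-cotangent complex sends pushouts of algebras to pushouts of modules (after base change to the terminal algebra); concretely, writing $A \otimes_R (-)$ for the relevant extension-of-scalars functor on cotangent complexes, the square yields an equivalence
\[
\L{n}{A} \simeq A \otimes_{\free{n}\Sigma^k R} \L{n}{\free{n}\Sigma^k R / R} \otimes_{R} R,
\]
or more precisely a cofiber sequence expressing $\L{n}{A}$ as the cofiber of a map between the base changes of $\L{n}{R/R} = 0$ and of $\L{n}{\free{n}\Sigma^k R/R}$ along the two legs. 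Since $\L{n}{R/R}=0$, the upshot is that $\L{n}{A}$ is the base change along $R \to A$ (via the $\bar 0$ leg) of a single suspension of $\L{n}{\free{n}\Sigma^k R}$. Thus it suffices to identify $\L{n}{\free{n}M}$ for $M$ a free $R$-module, together with the suspension coming from the cofiber.

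The next ingredient is the computation of the $\Es{n}$-cotangent complex of a \emph{free} $\Es{n}$ $R$-algebra. By the universal property of $\free{n}$ as a left adjoint and the fact that $\L{n}{-}$ is the composite of $\free{n}$ with the stabilization/abelianization functor, one has $\L{n}{\free{n}M} \simeq \free{n}M \otimes_R U_n(M)$ where $U_n(M)$ is the free $\Es{n}$ $R$-module on $M$; and the free $\Es{n}$ $R$-module on a free module $\Sigma^j R$ is $\free{1}(\Sigma^{n-1}\Sigma^j R)$ — this is the standard identification of $\Es{n}$-operadic modules with modules over $\free{1}(\Sigma^{n-1}R)$, i.e.\ over $R[\Omega^{n-1}S^{n-1}]$-type coefficients, reflecting Koszul/additivity of the $\Es{n}$-operad. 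Taking $j=k$ and carrying along the single suspension produced by the pushout cofiber sequence gives $\Sigma^{k+1}\free{1}(\Sigma^{n-1}R)$ as the underlying $\Es{n}$ $R$-module, and then base change along $R \to A$ replaces $\free{n}\Sigma^k R \otimes_R(-)$ by $A \otimes_R(-) = \modmod[\Es{n}]{R}{\chi} \otimes(-)$, yielding exactly the claimed equivalence of $\Es{n}$ $A$-modules.

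The main obstacle I anticipate is bookkeeping the $\Es{n}$-monoidal module structures correctly — in particular verifying that the operadic cotangent complex genuinely converts the algebra pushout into the asserted module-level pushout in $\Mod_A^{\Es{n}} = \Sp((\Alg_R^{\Es{n}})_{/A})$, and that the identification $U_n(\Sigma^k R) \simeq \free{1}(\Sigma^{n-1}R)\otimes_R \Sigma^k R$ is compatible with these structures rather than merely holding on underlying $R$-modules. This requires invoking the description of $\Es{n}$-tangent categories and the stable splitting of free $\Es{n}$-algebras (as in \cite{franciscotangent} and the relevant sections of \cite{HA}), and checking that the suspension shift from the cofiber sequence and the suspension shift built into $\Sigma^{n-1}$ combine as stated. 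One subtlety worth flagging explicitly: the hypothesis $n\ge 1$ is what makes $\Mod_A^{\Es{n}}$ behave well and makes $U_n$ take the stated form; for $n=0$ the statement and its proof would degenerate. Once the structural identifications are in place, no further computation is needed — the result is a formal consequence of the pushout presentation together with the two cited facts.
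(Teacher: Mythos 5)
Your overall strategy coincides with the paper's: identify $\L{n}{\free{n}\Sigma^kR}$ as a free $\Es{n}$-module on its generators, then combine the transitivity cofiber sequence for $R$ over $\free{n}\Sigma^kR$ (which produces the extra suspension, using $\L{n}{R}\simeq 0$) with base change along the defining pushout square (\cite[7.3.3.7]{HA}) to land on $\modmod[\Es{n}]{R}{\chi}$. The paper phrases the first step through Francis's factorization-homology description $\L{n}{\free{n}\Sigma^kR}\simeq\Sigma^k\int_{S^{n-1}}\free{n}\Sigma^kR$ rather than through an enveloping algebra $U_n$, but these are the same identification in different notation, so the two arguments are essentially identical in structure.

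The problem is in the one genuinely computational step, namely the identification of the free $\Es{n}$-module. You assert two formulas that are not equivalent: first that the free $\Es{n}$ $R$-module on $\Sigma^jR$ is $\free{1}(\Sigma^{n-1}\Sigma^jR)=\free{1}(\Sigma^{n-1+j}R)$, whose underlying spectrum is $\bigoplus_{m}\Sigma^{m(n-1+j)}R$, and later that $U_n(\Sigma^kR)\simeq\free{1}(\Sigma^{n-1}R)\otimes_R\Sigma^kR$, whose underlying spectrum is $\bigoplus_{m}\Sigma^{k+m(n-1)}R$. These disagree for every $k>0$, and only the second is compatible with the statement you are proving, so as written the argument yields different answers depending on which line one believes. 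This is exactly the point where the paper leans on the citation \cite[2.12, 2.25]{franciscotangent}, and it is the point requiring the most care: the enveloping algebra relevant to $\L{n}{\free{n}\Sigma^kR}$ is that of $\free{n}\Sigma^kR$ (not of the unit algebra), and one must track how it base-changes first along the augmentation $\bar{0}\colon\free{n}\Sigma^kR\to R$ and then along $R\to\modmod[\Es{n}]{R}{\chi}$; the dependence on $k$ does not obviously disappear in this process. A concrete way to settle which formula you mean is to run the argument through $n=1$, where $\Es{1}$-modules are bimodules, $\L{1}{\free{1}M}$ is the free bimodule $\free{1}M\otimes M\otimes\free{1}M$, and the enveloping algebra of the unit is $R$ itself rather than $\free{1}(R)$. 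Until that identification is pinned down and verified, the proof is not complete.
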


\begin{proof}
Firstly, the cotangent complex of the free $\Es{n}$-algebra $\free{n}\Sigma^kR$ can be computed as an $\Es{n}$ $\free{n}\Sigma^kR$-module using \cite[2.12, 2.25]{franciscotangent} as follows:
\[
\L{n}{\free{n}\Sigma^kR} \simeq \Sigma^k U_{\free{n}\Sigma^kR} \simeq \Sigma^k\free{1}(\Sigma^{k+n-1} R) \otimes {\free{n}(\Sigma^kR)},\]
where $U_{\free{n}\Sigma^kR}$ denotes the enveloping algebra of ${\free{n}\Sigma^kR}$ as in \cite{franciscotangent}. Secondly, the natural cofiber sequence for computing the relative cotangent complex~\cite[2.11]{franciscotangent} specializes to give a cofiber sequence
\[\xymatrix{\L{n}{\free{n}\Sigma^kR} \otimes_{\free{n}\Sigma^kR} R \ar[r] & \L{n}{R} \ar[r] & \L{n}{R/\free{n}\Sigma^kR}.}\]
Since $\L{n}{R}$ is contractible, we get 
\begin{align*}
\L{n}{R/\free{n}\Sigma^kR} & \simeq \Sigma \L{n}{\free{n}\Sigma^kR} \otimes_{\free{n}\Sigma^kR} R \\
& \simeq \Sigma^{k+1} \free{1}(\Sigma^{k+n-1} R) \otimes \free{n}\Sigma^kR \otimes_{\free{n}\Sigma^kR} R \\
& \simeq \Sigma^{k+1} \free{1}(\Sigma^{k+n-1} R).
\end{align*}
The base-change formula~\cite[7.3.3.7]{HA} applied to the defining pushout diagram
\[\xymatrix{\free{n} \Sigma^kR \ar[r]^-{\bar{0}} \ar[d]_{\bar{\chi}} & R \ar[d]^{\psi} \\
R \ar[r] & \modmod{R}{\chi}}\]
now gives the desired equivalence
\[\xymatrix{\L{n}{\modmod{R}{\chi}} \ar[r]^-{\sim} & \psi_!\L{n}{R/\free{n}\Sigma^kR} \simeq \Sigma^{k+1} \free{1}(\Sigma^{k+n-1} R) \otimes \modmod{R}{\chi}}\]
of $\Es{n}$ $\modmod{R}{\chi}$-modules. 
\end{proof}

By \Cref{charpthomcomparison}, we can translate this result immediately into a statement about the $\Es{n}$-cotangent complex of certain Thom spectra.

\begin{cor}
Let $n\ge 1$ and $f\colon S^{k+1} \to BGL_1(R)$ with corresponding $n$-fold loop map $\bar{f}\colon \Omega^n\Sigma^nS^{k+1} \to BGL_1(R)$, then there is an equivalence
\[\xymatrix{\Sigma^{k+1} \free{1}(S^{k+n-1}) \otimes M\bar{f} \ar[r]^-{\sim} &  \L{n}{M\bar{f}}}\]
of $\Es{n}$ $M\bar{f}$-modules. 
\end{cor}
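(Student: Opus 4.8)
The plan is to obtain the corollary as a formal consequence of Proposition \myref{ccthom} together with the identification of Thom spectra with versal algebras, Theorem \myref{charpthomcomparison}; the only extra input is an elementary base-change identity for free $\Es{1}$-algebras.

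First I would apply Theorem \myref{charpthomcomparison} to the map $f\colon S^{k+1}\to BGL_1(R)$: letting $\chi = \chi(f)\colon \Sigma^kR\to R$ be the associated characteristic, it furnishes an equivalence $M\bar{f}\simeq \modmod[\Es{n}]{R}{\chi}$ in $\Alg_R^{\Es{n}}$. Next I would invoke the functoriality of the $\Es{n}$-cotangent complex: an equivalence $A\simeq B$ of $\Es{n}$ $R$-algebras induces an equivalence of slice $\infty$-categories, hence, after stabilization, a symmetric monoidal equivalence $\Mod_A^{\Es{n}}\simeq \Mod_B^{\Es{n}}$ under which $\L{n}{A}$ corresponds to $\L{n}{B}$ (see \cite{franciscotangent} or \cite{HA}). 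Applying this to $M\bar{f}\simeq\modmod[\Es{n}]{R}{\chi}$ identifies $\L{n}{M\bar{f}}$, as an $\Es{n}$ $M\bar{f}$-module, with $\L{n}{\modmod[\Es{n}]{R}{\chi}}$, as an $\Es{n}$ $\modmod[\Es{n}]{R}{\chi}$-module, which by Proposition \myref{ccthom} is equivalent to $\Sigma^{k+1}\free{1}(\Sigma^{n-1}R)\otimes_R\modmod[\Es{n}]{R}{\chi}$.

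It then remains only to rewrite this expression. Since $R$ is an $\Es{\infty}$-ring, the free $\Es{1}$ $R$-algebra functor is the $R$-linear tensor algebra, and the identity $(R\otimes X)^{\otimes_R m}\simeq R\otimes X^{\otimes m}$ shows $\free{1}(\Sigma^{n-1}R)\simeq R\otimes\free{1}(S^{n-1})$, with $\free{1}$ on the right denoting the free $\Es{1}$-ring spectrum functor. Hence
\[\Sigma^{k+1}\free{1}(\Sigma^{n-1}R)\otimes_R\modmod[\Es{n}]{R}{\chi} \simeq \Sigma^{k+1}\free{1}(S^{n-1})\otimes\modmod[\Es{n}]{R}{\chi} \simeq \Sigma^{k+1}\free{1}(S^{n-1})\otimes M\bar{f},\]
which is the desired equivalence of $\Es{n}$ $M\bar{f}$-modules. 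There is no genuine obstacle here; the one point worth checking is that the comparison of Theorem \myref{charpthomcomparison} really lives in $\Alg_R^{\Es{n}}$ --- which it does, by construction --- so that transporting the cotangent complex computation along it is legitimate.
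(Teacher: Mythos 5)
Your proposal is correct and follows the paper's own route exactly: the paper obtains this corollary by combining \myref{charpthomcomparison} with \myref{ccthom}, treating the identification $\free{1}(\Sigma^{n-1}R)\simeq R\otimes\free{1}(S^{n-1})$ as implicit. Your extra remarks on functoriality of the cotangent complex and the base-change identity for free $\Es{1}$-algebras just make explicit what the paper leaves tacit.
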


\bibliographystyle{alpha}
\bibliography{bibliography}

\end{document}